\theoremstyle{plain}
\newtheorem{tw}{Theorem}[section]
\newtheorem {lem} [tw]{Lemma}
\newtheorem {prop}[tw] {Proposition}
\newtheorem{cor}[tw]{Corollary}
\theoremstyle{definition}
\newtheorem {deft}[tw] {Definition}
\newtheorem {rem} [tw]{Remark}
\DeclareMathAlphabet{\eufrak}{U}{}{}{}  
\SetMathAlphabet\eufrak{normal}{U}{euf}{m}{n}
\SetMathAlphabet\eufrak{bold}{U}{euf}{b}{n}
\newcommand{\bc} {\Bbb C}
\newcommand{\bn}{\Bbb N}
\newcommand{\br}{\Bbb R}
\newcommand{\Com}{\Delta}
\newcommand{\Cou}{\epsilon}
\newcommand{\wot}{\overline{\ot}}
\newcommand{\ComGG}{\Delta^{\iota, \cop}}
\newcommand{\alg} {\mathsf{A}}
\newcommand {\Tr} {{\textrm{Tr}}}
\newcommand {\Pol} {\tu{Pol}}
\newcommand {\PolQG} {\tu{Pol}(\QG)}
\newcommand{\tu}{\textup}
\newcommand{\mlg}{\mathsf{M}}
\newcommand{\Hil}{\mathsf{H}}
\newcommand{\Alg}{\mathcal{A}}
\newcommand{\cop}{\tu{cop}}
\newcommand{\op}{\tu{op}}
\newcommand{\Irr}{\tu{Irr}}
\newcommand{\QG}{\mathbb{G}}
\newcommand{\QH}{\mathbb{H}}
\newcommand{\Comp}{\mathbb{C}}
\newcommand{\F}{\mathcal{F}}
\newcommand{\Gb}{\mathbb{G}}
\newcommand{\Hb}{\mathbb{H}}
\newcommand{\M}{\mathcal{M}}
\newcommand{\n}{\mathbb{N}}
\newcommand{\Tb}{\mathbb{T}}
\newcommand{\z}{\mathbb{Z}}
\newcommand{\prt}{\widehat{\otimes}}
\newenvironment{rlist}
{

\begin{enumerate}}
{\end{enumerate}}
\newcommand{\la}{\langle}
\newcommand{\ra}{\rangle}
\newcommand{\id}{\textup{id}}
\DeclareMathOperator{\C}{C}
\DeclareMathOperator{\A}{A}
\DeclareMathOperator{\Lt}{L}
\newcommand{\cst}{\ifmmode\mathrm{C}^*\else{$\mathrm{C}^*$}\fi}
\DeclareMathOperator{\Linf}{L^\infty\!\!\;}
\newcommand{\hh}[1]{\widehat{#1}}
\newcommand{\hQG}{\hh{\QG}}
\newcommand{\hQH}{\hh{\QH}}
\newcommand{\ot}{\otimes}
\newcommand{\wt}{\widetilde}
\newcommand{\ltwo}{\Lt^2}
\numberwithin{equation}{section}
\keywords{Compact quantum group, Fourier algebra, diagonal subgroup, operator weak amenability}
\subjclass[2000]{ Primary 46L65 Secondary 43A20}
\begin{document}


\author{Uwe Franz}

\address{%
D\'epartement de math\'ematiques de Besan\c{c}on \\
Universit\'e de Franche-Comt\'e \\
16, route de Gray \\
25 030 Besan\c{c}on cedex, France}

\email{uwe.franz@univ-fcomte.fr}

\author{Hun Hee Lee}

\address{%
Department of Mathematical Sciences  and Research Institute of Mathematics, Seoul National University,
Gwanak-ro 1, Gwanak-gu, Seoul 08826, Republic of Korea}

\email{hunheelee@snu.ac.kr}

\author{Adam Skalski}

\address{
Institute of Mathematics of the Polish Academy of Sciences \\
ul.~\'Sniadeckich 8, 00--656 Warszawa, Poland}

\email{a.skalski@impan.pl}

\title{\bf Integration over the quantum diagonal subgroup and associated Fourier-like algebras}

\begin{abstract}
By analogy with the classical construction due to Forrest, Samei and Spronk we associate to every compact quantum group $\QG$ a completely contractive Banach algebra $\A_\Delta(\QG)$, which can be viewed as a deformed Fourier algebra of $\QG$. To motivate the construction we first analyse in detail the quantum version of the integration over the diagonal subgroup, showing that although the quantum diagonal subgroups in fact never exist, as noted earlier by Kasprzak and So\l tan, the corresponding integration represented by a certain idempotent state on $\C(\QG)$ makes sense as long as $\QG$ is of Kac type. Finally we analyse as an explicit example the algebras $\A_\Delta(O_N^+)$, $N\ge 2$, associated to Wang's free orthogonal groups, and show that they are not operator weakly amenable.
\end{abstract}

\maketitle

Let $G$ be a compact group. Since the celebrated work of Barry Johnson in \cite{BJohns} it has become clear that the Fourier algebra $\A(G)$ may exhibit some interesting Banach algebraic properties circling around the concept of amenability and related to the representation theory of $G$. Later years brought several developments of the ideas of \cite{BJohns}, leading in various directions. On one hand, the Fourier algebras started to be viewed as Banach algebras equipped with a natural, compatible operator space structure, so that the concepts such as operator amenability started playing a more prominent role (\cite{Ruan}). On the other, there have been several contributions replacing $\A(G)$ by other (operator) Banach algebras arising naturally in the study of compact groups. One of such examples was studied in \cite{FSS}, where Brian Forrest, Ebrahim Samei and Nico Spronk investigated Fourier-like algebras `of functions constant on cosets', of which a particular case was $\A_\Delta(G)$, arising from looking at the diagonal subgroup of $G \times G$. Forrest, Samei and Spronk showed that one can view  $\A_\Delta(G)$ as an algebra related to the twisted convolution on $G$ and computed explicitly its norm, which later allowed the authors of \cite{LSS} to interpret  $\A_\Delta(G)$ as a particular instance of the whole family of `$p$-Fourier algebras'. The latter turn out to have interesting amenability properties, in a sense bringing us back to the original motivations of Johnson.

It is thus natural to consider similar concepts for compact quantum groups in the sense of Woronowicz (\cite{worLH}). These are abstract objects, studied in terms of their `algebras of functions'. This makes it possible to transfer several natural notions of the classical theory to the quantum context -- so that, for example, we can consider the Fourier algebra $\A(\QG)$ for a compact quantum group $\QG$, or a notion of a quantum subgroup of a given group  -- but also makes others behave in a rather unnatural way. For example, non-classical groups do not admit quantum diagonal subgroups, as shown in \cite{KShom}. This seems at first glance to close the door to defining a version of the Fourier-like algebra $\A_\Delta(G)$ in the quantum world. A closer look shows, however, that the construction of Forrest, Samei and Spronk uses in fact only the idea of \emph{integrating over the diagonal subgroup}. We show here that if a compact quantum group $\QG$ is \emph{of Kac type}, then the integration as above has a satisfactory equivalent, represented by a particular idempotent state on $\QG \times \QG$; this brings us, on one hand, towards the theory of idempotent states as developed for example in \cite{FST} and, on the other, to the theory of quantum homogeneous spaces, going back to the work of Podle\'s in \cite{Podles}. Further it leads to a natural emergence of twisted convolution of \cite{FSS} and motivates a natural definition of the Fourier-like algebra $\A_\Delta(\QG)$ for any compact quantum group $\QG$ of Kac type. The definition itself suggests a possible extension beyond the Kac case, where some of the original motivations cease to exist. In special examples the arising objects can be described quite explicitly, and we finish the paper with studying such instances. In particular, we obtain a non-operator weakly amenable algebra $\A_\Delta(O_N^+)$, $N\ge 2$, where $O_N^+$ denotes the free orthogonal group of Wang (\cite{Wangfree}). This brings us back once again to the topics studied in the inspirational article \cite{BJohns}.

The detailed plan of the paper is as follows: in Section 1 we present the quantum group background and recall the classical construction of $\A_\Delta(G)$ from \cite{FSS}. Short Section 2 extends basic equivalences concerning the idempotent states, shown in \cite{FS} and \cite{FST} for coamenable compact quantum groups, to the general case (where they need to be slightly reformulated). In Section 3 we introduce the idempotent state on $\C(\QG \times \QG)$ corresponding to the integration over the diagonal subgroup, connect it to the considerations in \cite{KShom} and discuss a hierarchy of properties related to diagonal subgroups in the classical and quantum context. Section 4 defines the Fourier-like (completely contractive) Banach algebra $\A_\Delta(\QG)$ for a compact quantum group $\QG$ of Kac type and mentions possible generalizations beyond the Kac case. Finally in Section 5 we discuss a general strategy of showing that a given $\A_\Delta(\QG)$ is not operator weakly amenable and apply it in the concrete case of free orthogonal groups $\QG=O_N^+$, $N\ge 2$. We end this paper with remarks on the case of the `standard' Fourier algebra $\A(\QG)$ focusing on its non-weak amenability including the case of $\QG = O_N^+$, $N\ge 2$.

\vspace*{0.5 cm}
\noindent
    {\bf Acknowledgement.}\
AS  was partially supported by the NCN (National Centre of Science) grant
2014/14/E/ST1/00525. UF and AS acknowledge support by the French MAEDI and MENESR and by the Polish MNiSW through the Polonium programme. HHL and UF acknowledge support by the French MAEDI and MENESR and the Korean NRF through the Star programme. HHL was partially supported by Promising-Pioneering Researcher Program through Seoul National University(SNU) in 2015 and the Basic Science Research Program through the National Research Foundation of Korea (NRF), grant  NRF-2015R1A2A2A01006882.

\section{Notations and background} \label{Notations}

The algebraic tensor product will be usually denoted by $\odot$ with $\ot$ reserved for the tensor product of maps and minimal/spatial tensor product of \cst-algebras  or operator spaces and $\wot$ for the ultraweak tensor product of von Neumann algebras. For a positive integer $n$ by $M_n$ we mean the space of all complex $n$ by $n$ matrices (usually equipped with the operator norm) and by $S^1_n$ the same space equipped with the trace-class norm. For a functional $\phi$ on an algebra $A$ and an element $a\in A$ we write $a \cdot\phi$ for the functional given by the formula $(a \cdot\phi)(b) = \phi(ba)$, $b \in A$. The tensor flip will be denoted by $\Sigma$.  All scalar products are linear on the right.

\subsection{Compact quantum groups}\label{subsec-cqg}
Throughout the paper $\QG$ will be a compact quantum group in the sense of Woronowicz (\cite{wor1}, \cite{worLH} -- for the facts stated below we refer also to \cite{BMT}). It is studied via its `algebras of functions': a Hopf $^*$-algebra $\Pol(\QG)$ spanned by the coefficients of finite-dimensional unitary representations of $\QG$, $\C(\QG)$, the reduced version of the algebra of continuous functions on $\QG$, and its universal counterpart, $\C_u(\QG)$. The latter two are unital \cst-algebras; they can be constructed from $\Pol(\QG)$ via the natural completion procedures. The reduced one arises from completing $\Pol(\QG)$ to a \cst-algebra via the GNS representation associated to the \emph{Haar state} $h$, unique left-and right invariant state on $\Pol(\QG)$:
\[ (h \ot \id) \circ  \Com = (\id \ot h) \circ \Com = h(\cdot) 1.\]
The coproduct $\Com$ of $\Pol(\QG)$ extends continuously to a coassociative unital $^*$-homomorphism from $\C(\QG)$ to $\C(\QG) \ot \C(\QG)$, denoted by the same symbol.
The state $h$ is faithful on $\Pol(\QG)$, as is its extension to $\C(\QG)$. The GNS Hilbert space of $h$ is usually denoted simply  $\ltwo (\QG)$, so that  we have $\C(\QG) \subset B(\ltwo(\QG))$. We will need later as well the von Neumann algebra $\Linf(\QG):=\C(\QG)''$, playing the role of the algebra of bounded measurable functions on $\QG$; it is also naturally represented on $\ltwo(\QG)$. On the other hand $\C_u(\QG)$ is the universal $\cst$-completion of $\Pol(\QG)$, so that in particular we also have the coproduct on the universal level and a natural quotient map $\Lambda: \C_u(\QG) \to \C(\QG)$. If the latter is injective (equivalently, the Haar state remains faithful on $\C_u(\QG)$), we say that $\QG$ is \emph{coamenable}. Let us also recall that the coproduct of $\Pol(\QG)$ extends also in a natural way to (injective) maps on $\C_u(\QG)$ and on $\Linf(\QG)$. Thus the spaces $\C_u(\QG)^*$, $\C(\QG)^*$ and $\Linf(\QG)^*$  and $\Pol(\QG)'$ (where the last one denotes the vector space dual) become algebras with respect to the \emph{convolution product}
\[ \omega \star \phi:= (\omega \ot \phi) \circ \Com.\]
Each state on $\PolQG$ (i.e.\ a unital functional which takes non-negative values on elements of the form $x^*x$, $x \in \PolQG$) extends uniquely to a state on $\C_u(\QG)$ -- see for example \cite[Theorem 3.3]{BMT}. Note also that those Hopf $^*$-algebras which arise as $\PolQG$ for a compact quantum group $\QG$ admit an intrinsic characterization as so-called \emph{CQG algebras} (see \cite{DiK}).

Finite-dimensional unitary representations of $\QG$ are unitary matrices $U:=[u_{ij}]_{i,j=1}^n \in M_n (\Pol(\QG))\approx M_n \odot \Pol(\QG)$ such that $n \in \mathbb{N}$ and
\[ \Com (u_{ij})= \sum_{k=1}^n u_{ik} \ot u_{kj}, \;\;\;i, j=1,\ldots,n \]
(from now on we will simply call them  representations of $\QG$). A representation $U\in M_n (\Pol(\QG))$ is said to be irreducible if the only scalar matrices $T \in M_n$ such that $U (T \ot 1) = (T \ot 1) U$ are scalar multiples of the identity; two representations $U, V \in M_n (\Pol(\QG))$ are called equivalent if there is a unitary matrix $T \in M_n$ such that $U (T \ot 1) = (T \ot 1) V$. The set of equivalence classes of irreducible representations of $\QG$ will be denoted by $\Irr(\QG)$; we will always assume that for a given $\alpha \in \Irr(\QG)$ we fix a representative $U^{\alpha}= [u^{\alpha}_{ij}]_{i,j=1}^n \in M_{n_{\alpha}} (\Pol(\QG))$. The Peter-Weyl theorem for compact quantum groups says (in particular) that the set $\{u^{\alpha}_{ij}:\alpha \in \Irr(\QG), i,j=1,\ldots, n_{\alpha}\}$ is a linear basis of $\Pol(\QG)$. Moreover for each $\alpha \in \Irr(\QG)$ there exists a unique positive matrix $Q_{\alpha} \in GL(n_{\alpha})$ such that $\Tr(Q_{\alpha}) = \Tr(Q_{\alpha}^{-1}):=d_{\alpha}\geq n_{\alpha}$ and we have for all $\alpha, \beta \in \Irr(\QG)$
\[h\left(u_{ij}^{\alpha} (u_{kl}^{\beta})^* \right) = \delta_{\alpha \beta} \delta_{ik} \frac {(Q_{\alpha})_{l,j}}{d_{\alpha}};\]
\[h\left((u_{ij}^{\alpha})^* u_{kl}^{\beta} \right) = \delta_{\alpha \beta} \delta_{jl} \frac {(Q_{\alpha}^{-1})_{k,i}}{d_{\alpha}}.\]
The quantum group $\QG$ is said to be \emph{of Kac type} if $Q_\alpha=I_{M_{n_{\alpha}}}$ for each $\alpha \in \Irr(\QG)$ or, equivalently, the Haar state $h$ is a trace. The number $d_{\alpha}$ is called the \emph{quantum dimension} of the representation $U^{\alpha}$. One can always assume, by choosing suitable representatives $U^{\alpha}$, that all the matrices $Q_{\alpha}$ are diagonal, in which case we denote their diagonal entries simply by $q_i(\alpha)$, $i=1, \ldots, n_{\alpha}$. We will do so later without further comment.

The \emph{antipode} $S$ on $\PolQG$ is determined by the formula
\[ S(u^{\alpha}_{ij})= (u_{ji}^{\alpha})^*, \;\;\;\;\alpha \in \Irr(\QG), i,j=1,\ldots, n_{\alpha}.\]
It need not extend continuously to $\C(\QG)$ or $\C_u(\QG)$, but the \emph{unitary antipode} $R$, given by the formula
\[ R(u_{ij}^{\alpha}) = \left( \frac{q_j(\alpha)}{q_i(\alpha)} \right)^{\frac{1}{2}} (u_{ji}^{\alpha})^* \;\;\;\alpha \in \Irr(\QG), i,j=1,\ldots, n_{\alpha},\]
does extend to an isometric map. The matrix $\big(R(u^\alpha_{kj})\big)_{1\le j,k\le n_\alpha}\in M_n(\Pol(\QG))$ is again an irreducible representation of $\QG$ called the contragredient representation of $U^\alpha$, which we denote by $\bar{\alpha} \in \Irr(\QG)$. We will choose the representatives such that
	$$u^{\bar{\alpha}}_{jk}=R(u^\alpha_{kj}),\;\; \alpha\in\Irr(\QG),\; 1\le j,k\le n_\alpha.$$
Note that $\QG$ is of Kac type if and only if $S=R$.
We will later use also two natural automorphism groups of $\PolQG$ (viewed as a unital $^*$-algebra): the \emph{modular automorphism group} $(\sigma_t)_{t \in \br}$ and the \emph{scaling automorphism group} $(\tau_t)_{t \in \br}$: they are given respectively by the formulas ($t \in \br, \alpha \in \Irr(\QG), k,l=1,\ldots, n_{\alpha}$):
\[ \sigma_t(u_{kl}^{\alpha}) = (q_k(\alpha)q_l(\alpha)) ^{it} u_{kl}^{\alpha}.\]
\[ \tau_t(u_{kl}^{\alpha}) = \left( \frac{q_k(\alpha)}{q_l(\alpha)} \right)^{it} u_{kl}^{\alpha}. \]
(note both of these have natural extensions for real $t$ replaced by a complex number).

Finally, we recall the counit $\Cou :\PolQG \to \mathbb{C}$ given by $\Cou(u^\alpha_{ij}) = \delta_{ij}$, $\alpha \in \Irr(\QG)$, $1\le i,j\le n_\alpha$. Note that we have
	$$(id\otimes \Cou) \circ \Delta = (\Cou\otimes id) \circ \Delta = id.$$

\vspace*{0,5 cm}
To each compact quantum group $\QG$ one can associate three more formally different compact quantum groups $\QG'$, $\QG^\cop$ and $\QG^\op$. In its von Neumann algebraic version, the \emph{commutant} quantum group $\QG'$ is defined as the commutant $\Linf(\QG')=\Linf(\QG)'$ in $B(\ltwo(\QG))$ with the coproduct $\Delta_{\QG'}(x)= (J\ot J)\Delta(JxJ)(J\ot J)$ for $x\in \Linf(\QG')$, where $J$ is the standard involution on $\ltwo(\QG)$. The \emph{opposite} compact quantum group $\QG^\op$  is obtained from $\QG$ by replacing the multiplication $m$ on $\Pol(\QG)$ (or $\C(\QG)$, $C_u(\QG)$, $\Linf(\QG)$) with the opposite multiplication $m_\op=m\circ\Sigma$. Similarly, the \emph{co-opposite} compact quantum group $\QG^\cop$ is obtained from $\QG$ by replacing the coproduct $\Delta$ on $\Pol(\QG)$ (or $\C(\QG)$, $C_u(\QG)$, $\Linf(\QG)$) with the opposite coproduct $\Delta_\cop=\Sigma\circ\Delta$. This does not change the unit, the co-unit, or the Haar state, but the antipode and the Q-matrices have to be replaced by their inverses.

 In fact one always has $\QG^\cop \approx \QG^\op$ and if $\QG$ is of Kac type, then $\QG' \approx \QG^\op$. The first statement follows from the fact that each compact quantum group admits a unitary antipode $R$, the second is just the fact that if $\tau$ is a faithful normal trace on a von Neumann algebra $\mlg$, $(\pi_{\tau}, \Hil_{\tau}, \Omega_{\tau})$ is the GNS triple for $(M, \tau)$, and $J$ is the standard involution on $\Hil_{\tau}$ (so that $J(\pi_{\tau}(x)\Omega)=\pi_{\tau}(x^*)\Omega$ for all $x \in M$, then the map $\gamma:\pi_{\tau}(M) \to \pi_{\tau}(M)'$ given by the formula  $\gamma(z) = J z^* J, \; z \in \pi_{\tau}(M)$, is an anti-isomorphism. Note finally that our $\QG^\cop$ is the same as $\QG^\op$ in \cite{KShom}.

\subsection{Discrete quantum groups and the Fourier algebra of a compact quantum group} \label{discreteintro}

Given a compact quantum group $\QG$ its dual, discrete quantum group $\hQG$  is defined via its algebras of functions, $c_0(\hQG)$ and $\ell^{\infty}(\hQG)$, arising in the following way:
\[ c_0(\hQG):= c_0\,\text{-}\bigoplus_{\alpha \in \Irr(\QG)} M_{n_{\alpha}}, \;\;\; \ell^{\infty}(\hQG) :=\ell^\infty \text{-} \bigoplus_{\alpha \in \Irr(\QG)} M_{n_{\alpha}}.\]
Note that if $\QG=G$ happens to be a classical compact group, then $\ell^{\infty}(\hQG)=\textup{VN}(G)$, the group von Neumann algebra of $G$.

The above algebras $c_0(\hQG)$ and $\ell^{\infty}(\hQG)$ are also equipped with natural coproducts, denoted by $\hh{\Com}$, which are in a sense dual to the products of $\QG$. One way to introduce $\hh{\Com}$ is via the \emph{multiplicative unitary} $U$ in $\Linf(\QG) \wot \ell^{\infty}(\hQG)$, given by the formula
	$$U = \bigoplus_{\alpha \in \text{Irr}(\Gb)}U^{\alpha}$$
which satisfies the equality
\begin{equation}  \label{comU} (\hh{\Delta}\otimes I)U = U_{12}U_{13}, \end{equation}
determining $\hh{\Com}$.
We will give a more explicit description of $\hh{\Com}$ in Proposition \ref{prop1}. We note that $\hh{\Com}$ can be extended in a natural way to
	\begin{equation}\label{eq-unbdd-extension-coprod}
	\hh{\Com}: \prod_{\alpha \in \text{Irr}(\QG)}M_{n_\alpha} \to \prod_{\beta, \gamma \in \text{Irr}(\QG)}M_{n_\beta} \otimes M_{n_\gamma},
	\end{equation}
where $\prod_{\alpha \in \text{Irr}(\QG)}M_{n_\alpha}$ denotes the space of all sequences of matrices $(X_\alpha)_{\alpha \in \text{Irr}(\QG)}$ without any norm condition on $X_\alpha$.  The set of those elements of $\ell^{\infty}(\hQG)$ which are finitely supported will be denoted $c_{00}(\hQG)$. The space $c_{00}(\hQG)$ is equipped with $\widehat{h}$, the \emph{left Haar weight} on $\widehat{\Gb}$, given by the formula
	$$\widehat{h}((X_\alpha)_{\alpha \in \Irr(\QG)}) = \sum_{\alpha \in \text{Irr}(\QG)}d_\alpha\text{Tr}(Q_\alpha X_\alpha)$$
for all  $(X_\alpha)_{\alpha \in \Irr(\QG)}$ in $c_{00}(\hQG)$. To any element $X = (X_\alpha)_{\alpha \in \Irr(\QG)} \in c_{00}(\hQG)$ we can associate an element $X\cdot \widehat{h}$, which is a bounded linear functional on  $\ell^{\infty}(\hQG)$ given by the natural formula $(X\cdot \widehat{h})(a) = \hh{h}(aX), \;\;\; a \in \ell^{\infty}(\hQG).$

The space $c_{00}(\hQG)$ is also equipped with the {\it antipode} (of the dual quantum group $\hQG$) $\widehat{S} : c_{00}(\hQG) \mapsto c_{00}(\hQG)$ characterized by the following property.
	\begin{equation}\label{eq-antipode-dual}
	(I \otimes \widehat{S})(U^*) = U.
	\end{equation}	
Once we fix a canonical choice of basis we can assume that $\widehat{S}$ is concretely described as follows.
	\begin{equation}\label{eq-antipode-dual-transpose}
	\widehat{S}(X)_\alpha = X^t_{\bar{\alpha}},\;\; \alpha \in \Irr(\QG)
	\end{equation}	
for any $X = (X_\alpha)_{\alpha \in \Irr(\QG)} \in c_{00}(\hQG)$, where $\bar{\alpha}$ refers to the conjugate representation of $\alpha \in \Irr(\QG)$. See \cite[3.16]{PodWo}. Note that $X^j$ in \cite{PodWo} is the same as $\overline{X}$ for any scalar matrix $X$ by identifying $H_{\bar{\alpha}}$ with $\overline{H_\alpha}$.

In this paper we focus more on the predual of $\ell^{\infty}(\hQG)$, which we denote by $\ell^1(\hQG)$, which becomes an algebra with a convolution type product, namely $\widehat{\Delta}_*$. By analogy with the classical harmonic analysis we denote the algebra $\ell^1(\hQG)$ also as $\A(\QG)$ and call it the \emph{Fourier algebra of $\QG$}. Moreover, the norm structure of $\A(\QG)$, using the left Haar weight as the basis of the duality, can be described as follows.
	\begin{equation} \label{Fourier norm} ||X\cdot \widehat{h} ||_{A(\Gb)} = \sum_{\alpha \in \Irr (\QG)} d_\alpha \|X_\alpha Q_\alpha\|_{S^1_{n_\alpha}},\end{equation}
where $X = (X_\alpha)_{\alpha \in \Irr(\QG)} \in c_{00}(\hQG)$. Thus the algebra $\A(\QG)$ can be identified with the $\ell^1$-type direct sum of the spaces $S^1_{n_{\alpha}}$ (this gives it also an operator space structure -- see Subsection \ref{subsec-A_Delta}).

The term ``Fourier algebra'' has a close connection with Fourier transform. Recall that the Fourier transform on $\Pol(\Gb)$ is given by
	$$\mathcal{F} : \PolQG \to \ell^{\infty}(\hQG),\;\; x\mapsto (x\cdot h \otimes I)(U^*),$$
where $h$ is the Haar state on $\Gb$. Moreover, the extension of the classical Plancherel theorem takes the form of the equality
	$$\la y, x \ra_{L^2(\Gb)} (= h(y^*x)) = \la \F(y), \F(x)\ra_{L^2(\widehat{\Gb})} (=\widehat{h}(\F(y)^*\F(x))),\;\;\; x, y \in \PolQG.$$
Working as usual with the diagonal matrices $Q_\alpha$ we  can easily check that
	$$\F(u^{\alpha}_{ij}) = \frac{1}{d_\alpha q_i(\alpha)} e^{\alpha}_{ji},$$
where $e^{\alpha}_{ji}$ is a suitable matrix unit in $M_{n_{\alpha}}$. For a detailed description of the properties of $\F$ we refer, for example, to \cite{Simeng}.  The Fourier transform allows us to embed $\Pol(\QG)$ into $\A(\QG)$  as follows:
	\begin{equation}\label{eq-embedding}
	\PolQG \hookrightarrow\A(\QG),\;\; x \mapsto \F(x)\cdot \widehat{h}.
	\end{equation}
This embedding gives us the duality of $(\PolQG, \prod_{\alpha \in \text{Irr}(\QG)}M_{n_\alpha})$
 as follows.
	\begin{equation}\label{eq-duality}
		\la u^{\alpha}_{ij}, e^{\beta}_{kl} \ra
		:= (\F(u^{\alpha}_{ij})\cdot \widehat{h})(e^{\beta}_{kl} )
		= \frac{1}{d_\alpha q_i(\alpha)}(e^{\alpha}_{ji}\cdot \widehat{h})(e^{\beta}_{kl})
		= \frac{1}{d_\alpha q_i(\alpha)}\widehat{h}(e^{\beta}_{kl}e^{\alpha}_{ji})
		= \delta_{ik}\delta_{jl}\delta_{\alpha \beta}.
	\end{equation}
In other words, if we identify $\PolQG$ and $c_{00}(\hQG)$ via the Fourier transform, then the embedding $\PolQG \hookrightarrow\A(\QG),\;\; x \mapsto \F(x)\cdot \widehat{h}$ transfers the duality $(c_{00}(\hQG), \prod_{\alpha \in \text{Irr}(\QG)}M_{n_\alpha})$ with respect to the the Haar weight $\widehat{h}$ to the above duality.


\subsection{Closed quantum subgroups of compact quantum groups}\label{subsection-closedQsubgroups}

A \emph{morphism from a compact quantum group $\QH$ to a compact quantum group $\QG$} is represented by a unital $^*$-homomorphism $\pi:\Pol(\QG) \to \Pol(\QH)$ intertwining the respective coproducts:
\[ (\pi \ot \pi) \circ \Com_{\QG} =   \Com_{\QH} \circ \pi.\]
If $\pi$ is surjective, then we say it identifies $\QH$ with a \emph{(closed) quantum subgroup} of $\QG$. Such maps $\pi$ extend to unital $^*$-homomorphisms from $\C_u(\QG)$ to $\C_u(\QH)$, but not neccessarily to the reduced \cst-algebra $\C(\QG)$. Note that the fact that $\QG$ and $\QH$ are isomorphic means simply that $\PolQG$ and $\Pol(\QH)$ are isomorphic as Hopf $^*$-algebras.

In the above situation we have another canonical $*$-homomorphism $\iota: \ell^{\infty}(\widehat{\QH}) \to \ell^{\infty}(\hQG)$ satisfying
	$$\hat{\Delta}_{\Gb} \circ (\iota \otimes \iota) = \iota \circ \hat{\Delta}_{\Hb},$$
where $\hat{\Delta}_{\Gb}$ and $\hat{\Delta}_{\Hb}$ are coproducts of $\widehat{\QG}$ and $\widehat{\QH}$, respectively. The above two maps $\pi$ and $\iota$ are closely related by the following identity.
	\begin{equation}\label{eq-two-homomorphisms}
	(\pi\otimes I)\mathbb{U}_{\Gb} = (I\otimes \iota)\mathbb{U}_{\Hb},
	\end{equation}
where $\mathbb{U}_{\Gb}\in \textup{M}(\C^u(\QG) \ot c_0(\hQG)) $ and $\mathbb{U}_{\Hb}\in \textup{M}(\C^u(\Hb) \ot c_0(\hQH))$ are the `semi-universal' multiplicative unitaries, respectively of $\QG$ and of $\QH$, and we use the fact that $c_b(\hQH)$ is weak$^*$-dense in $\ell^{\infty}(\hQH)$. We refer to \cite{DawKasSkaSol} for details.

\subsection{Operator spaces, a Fourier-like algebra $A_\Delta(G)$ of a compact group $G$ and its Banach algebra properties in the operator space category}\label{subsec-A_Delta}
In the second part of the paper we will use the standard notions of operator space theory, as can be found for example in the book \cite{Pisier}.

Recall that an operator space is a norm closed subspace $E$ of $B(H)$ for some Hilbert space $H$. Then, we have a  natural norm structure on $M_n(E) = M_n \odot E$ regarded as a subspace of $B(H^n)$ for each $n \in \bn$. A linear map $T: E \to F$ between operator spaces is called {\it completely bounded} if the cb-norm $||T||_{cb} := \sup_{n\ge 1} ||I_n\otimes T: M_n(E) \to M_n(F)||$ is finite. We denote the space of all completely bounded maps from $E$ into $F$ by $CB(E,F)$. We say that $T\in CB(E,F)$ is a {\it complete contraction} if $||T||_{cb}\le 1$ and $T$ is a {\it complete isometry} if $T$ and $T^{-1}$ are complete contractions.

The category of operator spaces is usually called a {\it quantized} version of the category of Banach spaces, and most of the concepts for Banach spaces, such as tensor products, direct sums, and duality, have their counterparts for operator spaces. We will mainly use two tensor products, namely {\it projective} and {\it injective} (or {\it minimal}) tensor product of operator spaces, which we denote by $\widehat{\otimes}$ and $\otimes$, respectively. There are aso several versions of direct sums in the category of operator space. We denote the $c_0$, $\ell^\infty$ and $\ell^1$-direct sums by $c_0\text{-}\bigoplus_{i\in I}E_i$, $\ell^\infty \text{-}\bigoplus_{i\in I}E_i$ and $\ell^1\text{-}\bigoplus_{i\in I}E_i$, respectively, for a family of operator spaces $(E_i)_{i\in I}$. A \cst-algebra $A$ is, of course, an operator space, and its Banach space dual $A^*$ also carries a canonical operator space structure. Moreover,  the predual $\M_*$ of a von Neumann algebra $\M$ can be understood as an operator space via the embedding $\M_* \hookrightarrow \M^*$.

There are some more concrete operator spaces we will encounter frequently. For a Hilbert space $H$ the operator spaces $B(H,\Comp)$ and $B(\Comp, H)$ (as subspaces of $B(H)$) are called the {\it row} and {\it column Hilbert space} on $H$ denoted by $H_r$ and $H_c$, respectively. They are known to be operator space duals of each other, i.e.
	$$H^*_r \approx H_c,\;\; H^*_c \approx H_r$$
completely isometrically. When dim$H=n<\infty$, we simply denote them by $R_n$ and $C_n$. The \cst-algebra $M_n=B(\bc^n)$ and its dual $S^1_n$ are known to be decomposed by the row and column Hilbert spaces, namely
	$$M_n \approx R_n \ot C_n,\;\;  S^1_n \approx C_n \prt R_n$$
completely isometrically via the identification $e_{ij} \mapsto e_i \otimes e_j$. The space of Hilbert-Schmidt $n$ by $n$ matrices, $S_n^2$, is also a Hilbert space. Thus the space $S^2_n$ can be equipped with the row or column operator space structures, which decompose completely isometrically
	 $$(S^2_n)_r \approx R_n \ot R_n,\;\;  (S^2_n)_c \approx C_n \ot C_n$$
via the same identification $e_{ij} \mapsto e_i \otimes e_j$. Note further that we have
	$$CB(R_n, C_n) \approx CB(C_n, R_n) \approx S^2_n$$
isometrically.

We will now briefly describe how we can use the diagonal subgroups in the classical setting (i.e.\ when $G$ is a compact group) to define a Fourier-like algebra, to be denoted by $\A_\Delta(G)$. Recall that the usual  \emph{Fourier algebra of $G$}, as defined by Pierre Eymard in \cite{Eymard} will be denoted by $\A(G)$ and equipped with a natural operator space structure (see \cite{Ruan}).

Let $G$ be a classical compact group. Then the diagonal subgroup $D = \{(x,x) \in G\times G: x\in G\}$ is a closed subgroup of $G\times G$, which can be identified with $G$ via the map $(x,x) \mapsto x\in G$. Thus, averaging over $D$ allows us to consider a closed subalgebra of $\A(G\times G)$ as follows. The averaging map $P$ (over $D$) is given by
	$$P : \A(G\times G) \to \A(G\times G),\;\; f \mapsto P(f)$$
with
	$$P(f)(x,y) = \int_G f(xr,yr)dr,$$
where $dr$ is the Haar measure on $G$. Note that a priori $P$ has only values in the continuous functions on $G \times G$, but it follows from general properties of Fourier algebras that in fact $P( \A(G\times G)) \subseteq \A(G\times G)$.  It is clear that $P$ is a completely contractive idempotent whose range space is
	$$\text{ran}P = \{F\in \A(G\times G): r\cdot F = F,\; \forall r\in G\},$$
where $r\cdot F(x,y) = F(xr,yr)$, $x,y\in G$. It is also clear that the map $P$ satisfies the property
	$$P(F\cdot f) = F\cdot P(f),\;\; f\in A(G\times G), F\in \text{ran}P.$$
The above property tells us that $\text{ran}P$ is a closed subalgebra of $\A(G\times G)$ and $P$ is actually a complete quotient map. Note that the averaging map $P$ is nothing but a restriction of a conditional expectation from $C(G\times G)$ onto the corresponding subalgebra $\{F\in C(G\times G): r\cdot F = F,\; \forall r\in G\}$, which can be easily identified with $C(G\times G/D)$ via the map $F \mapsto M(F)$ with $M(F)((x,y)D) = F(x,y)$, $x,y\in G$.

Now we would like to find a completely isometric copy of the algebra $\text{ran}P \subseteq \A(G\times G)$ as a subalgebra of $\A(G)$, possibly not closed under the norm of $\A(G)$. In order to get such a model we recall the subalgebra $\{F\in C(G\times G): r\cdot F = F,\; \forall r\in G\} \cong C(G\times G/D)$ and the identification $G\times G/D \cong G,\; (x,e)D \mapsto x\in G$ to get $C(G\times G/D)\cong C(G)$. Consequently, combining all the above maps we get
	\begin{equation}\label{eq-Gammamap}
	\Gamma : \A(G\times G) \to C(G\times G/D) \cong C(G),\;\; f\mapsto \Gamma(f) = M\circ P(f).
	\end{equation}
In particular, for $f = u\otimes v \in \A(G\times G)$ we have
	$$\Gamma(u\otimes v)(x) = \Gamma(u\otimes v)((x,e)D) = \int_Gu(xr)v(r)dr = u*\check{v}(x),$$
where $\check{v}(x) = v(x^{-1})$.
Now we define a subalgebra
	\begin{equation}\label{eq-A_Delta-classic}
	\A_\Delta(G) := \text{ran}(\Gamma)
	\end{equation}
of $C(G)$, equipped with the norm
	$$||\Gamma(F)||_{\A_\Delta(G)} := ||P(F)||_{\A(G\times G)}.$$
We further endow $\A_\Delta(G)$ with the operator space structure which makes the map $\Gamma(F) \mapsto P(F)$ a complete isometry. This makes $\Gamma: \A(G\times G) \to \A_\Delta(G)$ a complete quotient map, i.e. the adjoint map $\Gamma^*$ is a complete isometry. In \cite{FSS} it was shown that $\A_\Delta(G)$ is a subalgebra of $C(G)$ with the following norm structure:
	\begin{equation}\label{eq-norm-A_Delta}
	||f||_{\A_\Delta(G)} = \sum_{\pi \in\widehat{G}} d^{\frac{3}{2}}_\pi ||\widehat{f}(\pi)||_2.
	\end{equation}
This explains that $\A_\Delta(G)$ is even a (non-closed) subalgebra of $\A(G)$.

We close this subsection with some known Banach algebraic properties of $A_\Delta(G)$ in operator space category.  A Banach algebra $\mathcal{A}$ with an operator space structure is called a {\it completely contractive Banach algebra} if its algebra multiplication $m: \mathcal{A} \otimes_\gamma \mathcal{A} \to \mathcal{A}$ extends to a completely contractive map $m: \mathcal{A} \prt \mathcal{A} \to \mathcal{A}$. The algebras $A(G)$ and $A_\Delta(G)$ are known to be completely contractive Banach algebras. Recall that a (completely contractive) Banach algebra $\mathcal{A}$ is called (operator) amenable if every (completely) bounded derivation $D: \mathcal{A} \to X^*$ is inner, for any Banach (operator) $\mathcal{A}$-bimodule $X$. We say that $\mathcal{A}$ is (operator) weakly amenable if the same statement holds for the specific choice of $X=\mathcal{A}$. The algebra $\A_\Delta(G)$ reflects properties of $G$ in a different way from the way $\A(G)$ does, namely through amenability-type properties  in the operator space category. For example, it has been shown that (\cite[Theorem 4.1, Theorem 4.2]{FSS2}) $\A_\Delta(G)$ is operator amenable if and only if $A(G)$ is amenable (in the Banach space category) if and only if $G$ is virtually abelian. Moreover, $\A_\Delta(G)$ is operator weakly amenable if and only if $\A(G)$ is weakly amenable if and only if $G_e$, the connected component of the identity of $G$, is abelian.

\section{Idempotent states on compact quantum groups -- non-coamenable case}

In this short section we show how the correspondence between the idempotent states on compact quantum groups and expected coidalgebras, established in \cite{FS} in the coamenable case, extends to general compact quantum groups.


We say that $A\subset \Pol(\QG)$ (respectively, $\alg \subset\C(\QG)$, $\mathfrak{A}\subset L^{\infty}(\QG)$) is a \emph{right coidalgebra} if it is a unital $^*$-algebra (respectively, norm closed and weak$^*$-closed) such that $\Com(A)\subset  \PolQG \odot A$ (respectively,  $\Com(\alg)\subset \C(\QG) \ot \alg  $, $\Com(\mathfrak{A})\subset  L^{\infty}(\QG) \wot \mathfrak{A} $). We say that a right coidalgebra $A\subset \Pol(\QG)$ (respectively, $\alg \subset\C(\QG)$, $\mathfrak{A}\subset L^{\infty}(\QG)$) is \emph{expected} if there exists a Haar state preserving conditional expectation (i.e.\ a completely positive unital idempotent map) from $\PolQG$ onto $A$ (respectively from $\C(\QG)$ onto $\alg$, from $L^{\infty}(\QG)$ onto $\mathfrak{A}$).

\begin{lem}\label{Polinv}
Suppose that $x\in L^{\infty}(\QG)$ and $\Com(x) \subset \PolQG \odot L^{\infty}(\QG)$. Then $x \in \PolQG$.
\end{lem}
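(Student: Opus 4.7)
The plan is to use the Peter--Weyl decomposition of $\ltwo(\QG)$ together with the fact that the GNS vector $\Omega$ representing $h$ is separating for $\Linf(\QG)$: I will show that the Peter--Weyl expansion of $x\Omega$ in $\ltwo(\QG)$ involves only finitely many spectral summands, and then deduce $x \in \PolQG$ from the separating property.

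First, write $\Com(x) = \sum_{i=1}^N a_i \ot b_i$ with $a_i \in \PolQG$ and $b_i \in \Linf(\QG)$, and let $F \subset \Irr(\QG)$ be the finite set of equivalence classes of irreducible representations whose matrix coefficients appear in the Peter--Weyl expansions of $a_1,\ldots,a_N$. Left invariance of $h$ and the fact that $\Com$ is a unital $^*$-homomorphism yield
\[
\|\Com(y)(\Omega \ot \Omega)\|^2 = (h \ot h)(\Com(y^*y)) = h(y^*y) = \|y\Omega\|^2, \qquad y \in \Linf(\QG),
\]
so the densely defined map $V(y\Omega) := \Com(y)(\Omega \ot \Omega)$ extends to an isometry $V : \ltwo(\QG) \to \ltwo(\QG) \ot \ltwo(\QG)$.

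For each $\alpha \in \Irr(\QG)$ let $H_\alpha \subset \ltwo(\QG)$ be the finite-dimensional subspace spanned by $\{u^\alpha_{ij}\Omega : 1 \le i,j \le n_\alpha\}$. By the orthogonality relations for $h$ the $H_\alpha$ are pairwise orthogonal, and their algebraic direct sum is dense in $\ltwo(\QG)$, yielding the Peter--Weyl decomposition $\ltwo(\QG) = \bigoplus_\alpha H_\alpha$. A direct computation from $\Com(u^\alpha_{ij}) = \sum_m u^\alpha_{im} \ot u^\alpha_{mj}$ shows that $V(H_\alpha) \subseteq H_\alpha \ot \ltwo(\QG)$, so the images of distinct $H_\alpha$ under $V$ lie in mutually orthogonal subspaces of $\ltwo(\QG) \ot \ltwo(\QG)$ (orthogonality already in the first leg).

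Decompose $x\Omega = \sum_\alpha \xi_\alpha$ with $\xi_\alpha \in H_\alpha$; then $V(x\Omega) = \sum_\alpha V(\xi_\alpha)$ is an orthogonal sum. On the other hand, $V(x\Omega) = \Com(x)(\Omega \ot \Omega) = \sum_i a_i\Omega \ot b_i\Omega$ has its first leg contained in the finite-dimensional subspace $\bigoplus_{\alpha \in F} H_\alpha$, so orthogonality in the first leg forces $V(\xi_\alpha) = 0$, hence $\xi_\alpha = 0$, for every $\alpha \notin F$. Thus $x\Omega \in \bigoplus_{\alpha \in F} H_\alpha$; picking $y$ in the finite-dimensional subspace of $\PolQG$ spanned by matrix coefficients of representations in $F$ with $y\Omega = x\Omega$, the separating property of $\Omega$ gives $x = y \in \PolQG$. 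The only delicate point is the isometry of $V$, but this relies solely on left invariance of $h$ and on $\Com$ being a $^*$-homomorphism, so the argument applies to arbitrary compact quantum groups without any Kac assumption.
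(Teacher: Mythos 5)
Your argument is correct, but it takes a genuinely different route from the paper's. The paper stays entirely inside the von Neumann algebra: writing $\Com(x)=\sum_{\alpha\in F}\sum_{i,j}u^{\alpha}_{ij}\ot b^{\alpha}_{ij}$, it uses coassociativity and linear independence to derive the coaction-type identity $\Com(b^{\alpha}_{ij})=\sum_k u^{\alpha}_{jk}\ot b^{\alpha}_{ik}$, applies the normal spectral projections $E_\alpha=\sum_r(\phi^{\alpha}_{r,r}\ot\id)\circ\Com$ to conclude $b^{\alpha}_{ij}\in\PolQG$, and finishes by the injectivity of $\Com$ applied to $x-\sum_{\alpha\in F}\sum_i b^{\alpha}_{ii}$. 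You instead pass to the GNS space, implement $\Com$ on $\ltwo(\QG)$ by the isometry $V(y\Omega)=\Com(y)(\Omega\ot\Omega)$, and use orthogonality of the Peter--Weyl blocks in the first leg together with the separating property of $\Omega$. Both routes work for arbitrary (non-Kac) $\QG$, and all your individual steps check out: the isometry of $V$ follows from invariance of $h$ on $\Linf(\QG)$, the inclusion $V(H_\alpha)\subseteq H_\alpha\ot\ltwo(\QG)$ from the coproduct formula for matrix coefficients, and the vanishing of $\xi_\alpha$ for $\alpha\notin F$ from orthogonality plus isometry. Your version is shorter and avoids tracking the coefficients $b^{\alpha}_{ij}$, but it leans on the fact that $\Omega$ is separating for $\Linf(\QG)$ (equivalently, that $h$ is faithful and normal on the von Neumann algebra) -- this is standard, but the paper only explicitly records faithfulness of $h$ on $\C(\QG)$, whereas its own proof needs only the injectivity of $\Com$ on $\Linf(\QG)$ and normality of the $E_\alpha$. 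Conversely, the paper's computation has the small bonus of exhibiting $x$ explicitly as $\sum_{\alpha\in F}\sum_i b^{\alpha}_{ii}$.
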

\begin{proof}
This is likely well known, and essentially follows as in Proposition 2.2 of \cite{Soltan}. We reproduce the proof for completeness. For each $\alpha$ in $\Irr(\QG)$ consider the normal projection $E_{\alpha}$ from $L^{\infty}(\QG)$ onto $X_{\alpha}:=\tu{span}\{u_{ij}^{\alpha}:i,j=1, \ldots n_{\alpha}\}$, given by the formula
\[E_{\alpha} = \sum_{r=1}^{n_{\alpha}} ( \phi^{\alpha}_{r,r}\ot \id)\circ \Com,\]
where for every $r=1,\ldots, n_\alpha$ the elements $\phi^{\alpha}_{r,r}\in L^{\infty}(\QG)_*$ are determined by equalities
\[ \phi^{\alpha}_{r,r}(u_{i,j}^{\beta}) = \delta_{\alpha, \beta} \delta_{i,r} \delta_{j,r}, \;\;\;\beta\in\Irr(\QG), i,j=1,\ldots, n_{\beta}.\]
In other words, $\phi^{\alpha}_{r,r} \approx e^\alpha_{rr}$ with respect to the duality \eqref{eq-duality}. The fact that $E_{\alpha}(\PolQG)=X_{\alpha}$ can be checked by explicit calculation, and then  $E_{\alpha}(L^{\infty}(\QG))=X_{\alpha}$ follows by normality and density.
Let $x$ be as in the statement of the lemma. Then there exists a finite set $F \subset \Irr(\QG)$ and elements $\{b_{ij}^{\alpha} \in L^{\infty}(\QG):\alpha \in F, i,j=1,\ldots, N_{\alpha}\}$ such that
\[\Com(x)= \sum_{\alpha \in F} \sum_{i,j=1}^{n_{\alpha}} u_{ij}^{\alpha} \ot b_{ij}^{\alpha}.\]
Computing $(\id \ot \Com) (\Com(x))$ and $(\Com \ot \id)(\Com(x))$, followed by using then linear independence of $u_{ij}^{\alpha}$ yields the equality
\[ \Com(b_{ij}^{\alpha}) =\sum_{k=1}^{n_{\alpha}} u_{jk}^{\alpha} \ot b_{ik}^{\alpha}, \;\;\; \alpha \in F, i,j=1,\ldots, n_{\alpha}.\]
Then, we have
	$$E_\alpha(b^\alpha_{ij}) = \sum_{r=1}^{n_{\alpha}} ( \phi^{\alpha}_{r,r}\ot \id)\circ \Com(b^\alpha_{ij}) = \sum_{r, k=1}^{n_{\alpha}} \phi^{\alpha}_{r,r}(u^\alpha_{jk}) b^\alpha_{ik} = b^\alpha_{ij},$$
so that we have $b_{ij}^{\alpha}\in X_{\alpha} \subset \PolQG$. Further consider $x'=\sum_{\alpha \in F}\sum_{i=1}^{n_{\alpha}} b_{ii}^{\alpha}$. Then the equality displayed above shows that $\Com(x)=\Com(x')$, so, as $\Com$ is injective, $x=x' \in \PolQG$.
\end{proof}

We will also need the following lemma, which is a consequence of the main result of \cite{Matt} (and is also stated explicitly as Lemma 3.4 in \cite{Bra}).

\begin{lem}\label{normalext}
Let $\varphi: \PolQG\to \bc$ be a state. Then the convolution operator $L_{\varphi}:\PolQG \to \PolQG$ given by the formula $(\id \ot \varphi)\circ \Com$ extends to a completely positive normal map $\tilde{L}_{\varphi}:L^{\infty}(\QG) \to L^{\infty}(\QG)$ (so also to a completely positive unital map on $\C(\QG)$).
\end{lem}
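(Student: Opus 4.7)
The plan is to move up to the universal level $\C_u(\QG)$, build a completely positive unital convolution operator there, and then invoke the main result of \cite{Matt} to descend back to the reduced level and extend normally to $\Linf(\QG)$.

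First I would use the fact (recalled after \cite[Theorem 3.3]{BMT} in the background) that $\varphi$ extends uniquely to a state $\tilde{\varphi}$ on $\C_u(\QG)$, and form
\[ M:=(\id\ot\tilde{\varphi})\circ\Com_u:\C_u(\QG)\to\C_u(\QG), \]
which is completely positive and unital as a composition of the unital $^*$-homomorphism $\Com_u$ with slicing by the state $\tilde{\varphi}$. Since $\Com_u$ and $\tilde{\varphi}$ restrict to $\Com$ and $\varphi$ on $\PolQG$, the restriction of $M$ to $\PolQG$ is precisely $L_\varphi$.

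Next I would show that $M$ descends through the reducing quotient $\Lambda:\C_u(\QG)\to\C(\QG)$. By left-invariance of the Haar state,
\[ h\circ M = (h\ot\tilde{\varphi})\circ\Com_u = \tilde{\varphi}\bigl((h\ot\id)\circ\Com_u\bigr) = \tilde{\varphi}\bigl(h(\cdot)\,1\bigr) = h, \]
so the Schwarz inequality for completely positive unital maps yields
\[ h\bigl(M(a)^*M(a)\bigr)\le h\bigl(M(a^*a)\bigr)=h(a^*a), \qquad a\in\C_u(\QG). \]
Since $h$ is faithful on the quotient $\C(\QG)$ and $\ker\Lambda\subseteq\ker h$, this gives $a\in\ker\Lambda \Rightarrow h(M(a)^*M(a))=0 \Rightarrow M(a)\in\ker\Lambda$. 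Hence $M$ induces a completely positive unital map on $\C(\QG)$ that still extends $L_\varphi$ on $\PolQG$.

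The final and decisive step is to upgrade this map to a \emph{normal} completely positive map on $\Linf(\QG)$, which is exactly what the main result of \cite{Matt} provides when fed the state $\tilde{\varphi}$: it produces a normal completely positive map on $\Linf(\QG)$ implementing convolution by $\tilde{\varphi}$, whose restriction to $\PolQG$ is $L_\varphi$ and which is automatically unital. This is also where the main obstacle lies: since $\tilde{\varphi}$ is not in any obvious sense normal on $\Linf(\QG)$, there is no naive way to make sense of $(\id\ot\tilde{\varphi})\circ\Com$ directly on the von Neumann algebraic completion, and the non-trivial input from \cite{Matt} -- passing through the semi-universal multiplicative unitary -- is precisely the mechanism delivering the required normal extension for every state on the universal \cst-algebra.
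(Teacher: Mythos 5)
Your proposal is correct and follows essentially the same route as the paper, which gives no independent argument but simply records the lemma as a consequence of the main result of \cite{Matt} (also stated as Lemma 3.4 of \cite{Bra}); your decisive step is that very citation, and your preliminary reductions (extending $\varphi$ to $\C_u(\QG)$ and descending through $\Lambda$ via the Schwarz inequality and faithfulness of $h$ on $\C(\QG)$) are a sound, if strictly speaking redundant, elaboration of what that reference already delivers.
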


The following theorem is a combination of techniques of \cite{FS} and the above results.

\begin{tw}
Let $\QG$ be a compact quantum group.
There is a one-to-one correspondence between the following objects:
\begin{rlist}
\item idempotent states $\varphi$ on $\Pol(\QG)$;
\item idempotent states $\tilde{\varphi}$ on $\C_u(\QG)$;
\item expected right (equivalently, left) coidalgebras $\Alg$ in $\Pol(\QG)$;
\item expected right (equivalently, left) coidalgebras $\alg$ in $\C(\QG)$;
\item expected right (equivalently, left) coidalgebras $\mathfrak{A}$ in $L^{\infty}(\QG)$.
\end{rlist}
It is given by the following relations: $\tilde{\varphi}$ is a continuous extension of $\varphi$; $\Alg = ( \id \ot\varphi )\circ\Com(\Pol(\QG))$, $\alg$ is the norm closure of $\Alg$ in $\C(\QG)$, $\mathfrak{A}$ is the weak $^*$-closure of $\Alg$ in $L^{\infty}(\QG)$ (the corresponding left versions arise as $(\varphi \ot \id )\circ\Com(\Pol(\QG))$, etc.).
\end{tw}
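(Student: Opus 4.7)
The plan is to reduce all equivalences to the algebraic level~(i), where the counit $\Cou$ is defined, and transport structure between levels using the two preliminary lemmas. The equivalence (i)~$\Leftrightarrow$~(ii) is immediate from \cite[Theorem~3.3]{BMT}: every state on $\PolQG$ has a unique state extension to $\C_u(\QG)$, and since the coproduct on $\C_u(\QG)$ extends that of $\PolQG$, convolution and hence idempotence are preserved.

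The correspondence (i)~$\Leftrightarrow$~(iii) is purely Hopf-algebraic and follows the argument of~\cite{FS} (which uses only the counit on $\PolQG$, not any extension, so coamenability is not needed at this step). Given an idempotent $\varphi$, set $E:=L_\varphi=(\id\ot\varphi)\circ\Com$; coassociativity combined with $\varphi\star\varphi=\varphi$ yields $E^2=L_{\varphi\star\varphi}=L_\varphi$, $E$ is plainly completely positive unital, and left-invariance of $h$ gives $h\circ E=h$. Coassociativity also produces the intertwining $\Com\circ E=(\id\ot E)\circ\Com$, so $\Alg:=E(\PolQG)$ is a right coidalgebra, and by Tomiyama's theorem $E$ is a conditional expectation onto it. Conversely, given $\Alg$ and a Haar-preserving conditional expectation $E$, put $\varphi:=\Cou\circ E$; the key point is $E=L_\varphi$, for which, once the intertwining $\Com\circ E=(\id\ot E)\circ\Com$ is known, one uses $(\id\ot\Cou)\circ\Com=\id$ to compute $L_\varphi(x)=(\id\ot\Cou)(\id\ot E)\Com(x)=(\id\ot\Cou)\Com(E(x))=E(x)$. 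Then $L_{\varphi\star\varphi}=L_\varphi\circ L_\varphi=E^2=L_\varphi$, and applying $\Cou$ yields $\varphi\star\varphi=\varphi$.

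To pass between (iii), (iv) and (v), Lemma~\ref{normalext} extends $L_\varphi$ to a completely positive unital (normal) idempotent $\tilde L_\varphi$ on $\C(\QG)$ and on $\Linf(\QG)$, whose image is the norm (resp.\ weak$^*$) closure of $\Alg$, giving $\alg$ and $\mathfrak{A}$; the coidalgebra and expectedness properties transfer by continuity. Conversely, a Haar-preserving conditional expectation $\tilde E:\Linf(\QG)\to\mathfrak{A}$ (the $\C(\QG)$-case is analogous) restricts to $\PolQG$: for $x\in\PolQG$ one has $\Com(x)\in\PolQG\odot\PolQG$, and the intertwining $\Com\circ\tilde E=(\id\ot\tilde E)\circ\Com$ gives $\Com(\tilde E(x))\in\PolQG\odot\Linf(\QG)$, whence Lemma~\ref{Polinv} forces $\tilde E(x)\in\PolQG$. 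Thus $\tilde E|_{\PolQG}$ is a Haar-preserving conditional expectation at the algebraic level, reducing to case~(iii). Finally, the left--right equivalence comes from the unitary antipode $R$, an $h$-preserving anti-automorphism (continuous on $\C(\QG)$, normal on $\Linf(\QG)$) that bijectively exchanges right and left coidalgebras.

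The main technical obstacle is the intertwining identity $\Com\circ E=(\id\ot E)\circ\Com$ when $E$ is given abstractly as a Haar-preserving conditional expectation rather than as $L_\varphi$; this identity underlies both the core step $E=L_\varphi$ and the application of Lemma~\ref{Polinv}. It holds automatically on $\Alg$ (from the coidalgebra property combined with $E|_\Alg=\id$); on $\ker E$, which equals the $h$-orthogonal complement $\Alg^\perp$ (by Haar-preservation together with the module property of the conditional expectation), one must show that $\Alg^\perp$ is itself a (vector-space) right coideal of $\PolQG$, via a careful orthogonality computation exploiting the invariance of $h$.
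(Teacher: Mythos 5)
Your proposal follows essentially the same route as the paper: (i)$\Leftrightarrow$(ii) via unique state extension, (i)$\Leftrightarrow$(iii) via $E=L_\varphi$ and $\varphi=\Cou\circ E$ resting on the intertwining identity $\Com\circ E=(\id\ot E)\circ\Com$ taken from \cite{FS} and \cite{PekkaAdam}, and the passage to (iv),(v) via Lemma~\ref{normalext} with the return trip via Lemma~\ref{Polinv}. The one caveat is that ``Tomiyama's theorem'' does not by itself yield that $E_\varphi(\PolQG)$ is a $*$-subalgebra --- that is precisely Lemma~3.1 of \cite{FS} and uses the idempotency of $\varphi$ in an essential way --- but since you explicitly anchor that step to the argument of \cite{FS}, as does the paper, this is a gloss rather than a gap.
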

\begin{proof}
We describe the relevant passages for the `right' version of the above equivalences:

(i)$\Longleftrightarrow$(ii)

This follows immediately from the one-to-one correspondence between states on $\Pol(\QG)$ and $\C_u(\QG)$ mentioned in Subsection \ref{subsec-cqg} and the fact that the extension/restriction procedure preserves the convolution products.

(i)$\Rightarrow$(iii)

Consider the map $E_{\varphi}:=(\id \ot\varphi )\circ\Com:\PolQG \to \PolQG$. Lemma 3.1 in \cite{FS} implies that we must have
$E_{\varphi}(E_{\varphi}(x)E_{\varphi}(y))=E_{\varphi}(x)E_{\varphi}(y)$ for all $x,y \in \PolQG$; in particular $\Alg:=E_{\varphi}(\PolQG)$ is a unital $^*$-subalgebra of $\PolQG$ and $E_{\varphi}$ is the $h$-preserving conditional expectation onto $\Alg$. As
\[ \Com \circ E_{\varphi} = (\id \ot E_{\varphi}  ) \circ \Com,\]
$\Alg$ is a right coidalgebra.

 (iii)$\Rightarrow$(i)

 Suppose that $\Alg \subset \PolQG$ is an expected right coidalgebra and $E:\PolQG \to \Alg$ is the corresponding $h$-preserving conditional expectation. Then, arguing as in the proof of Theorem 4.1 of \cite{FS}, or as in the proof of Proposition 3.3 of \cite{PekkaAdam}, we see first that for any linear functional $\nu:\PolQG \to \bc$ we have $(\nu \ot \id)\circ \Com \circ E = E \circ (\nu \ot \id)\circ \Com $ and then that it implies that $\Com \circ E = (\id \ot E  ) \circ \Com$. This implies that $E=(\id \ot\varphi )\circ\Com$, where $\varphi = \Cou \circ E$ is an idempotent state.

(iii)$\Rightarrow$(iv),(v)

From the argument above we see that each expected right coidalgebra $\Alg$ is of the form $(\id \ot\varphi )(\Com (\PolQG))$ and the corresponding $h$-preserving conditional expectation is $E:=(\id \ot\varphi )\ot \Com$. It is clear that the respective norm closure of $\Alg$ in  $\C(\QG)$, $\alg$, and weak $^*$-closure of $\Alg$ in $L^{\infty}(\QG)$, $\mathfrak{A}$, are right coidalgebras.  Lemma \ref{normalext} implies that $E$ extends to a normal unital completely positive map on $L^{\infty}(\QG)$ (preserving $\C(\QG)$). It is easy to see (using at one point Kaplansky theorem and properties of conditional expectations) that the respective extensions of $E$ are $h$-preserving conditional expectations from $L^{\infty}(\QG)$ onto $\mathfrak{A}$ and from $\C(\QG)$ onto $\alg$.

(v)  $\Rightarrow$ (i)

Here the arguments are similar, so we just discuss. We first repeat the proof in  (iii)$\Rightarrow$(i) (again, see Proposition 3.3 of \cite{PekkaAdam}) to show that if say $E: \Linf(\QG) \to \mathfrak{A}$ is the $h$-preserving normal expectation onto an  expected right coidalgebra $\mathfrak{A}$ then for any $\nu \in L^{\infty}(\QG)_*$ we have $(\nu \ot \id)\circ \Com \circ E = E \circ (\nu \ot \id)\circ \Com $ and deduce that  $\Com \circ E = (\id \ot E  ) \circ \Com$ . The last intertwining equality shows that for any $x \in \PolQG$ we have $\Com(E(x)) \in \PolQG \ot L^{\infty}(\QG)$. By Lemma \ref{Polinv} it implies that $E(x) \in \PolQG$. Thus
$E(\PolQG) \subset \PolQG$. Using again the interwining relation above we see that $E|_{\PolQG}=(\id \ot\varphi )\circ\Com$ for an idempotent state $\phi$ on $\PolQG$.

(iv)  $\Rightarrow$ (iii)

Here the argument is almost identical, the only difference is that for $\alg \subset \C(\QG)$ we use functionals $\nu \in \C(\QG)^*$.

\vspace*{0.5cm}
The left versions follow identically, and the fact that the correspondences are 1-1 follow very easily.

\end{proof}

In fact recently Pekka Salmi and the third named author showed in \cite{PekkaAdamII} that the 1-1 correspondence between objects in (ii), (iv) and (v) persists even in the case when $\QG$ is a \emph{locally} compact quantum group, thus extending  the main results of \cite{PekkaAdam} beyond the coamenable and unimodular case. The methods in the locally compact case are quite different.

\section{Averaging over the `diagonal subgroup'}

The aim of this section is the development of the averaging procedure (represented by an idempotent state on $\C_u(\QG) \ot \C_u(\QG))$) corresponding classically to the integration over the diagonal subgroup in $\QG \times \QG$. One can present the construction working either with $\QG^{\op}$ or with $\QG^\cop$ -- we will do the latter, to preserve compatibility with \cite{KShom}. Write $\Alg$ for $\Pol(\QG)$, $\Alg^{\cop}$ for $\Pol(\QG^{\cop})$ and $\Alg^{\op}$ for $\Pol(\QG^{\op})$.

The formal identity map from $\Alg$ to $\Alg^{\cop}$ will be denoted by $\widetilde{}\,$; thus, denoting the tensor flip map on $\Alg \ot \Alg$ by $\Sigma$ we have
\[ \Com^{\cop} \circ \,\widetilde{}\, = (\,\widetilde{}\, \circ \widetilde{}\,)\circ \Sigma \circ \Com.\]
The map $\,\widetilde{}\,:\Alg \to \Alg^{\cop}$ is obviously a vector space isomorphism. Note also that all the algebras of the form say $\Alg \odot \Alg^{\cop}$ are $CQG$-algebras, so in particular we have a natural notion of states (normalised, positive functionals) on them, and also natural convolution products on the corresponding state spaces.  The Haar state of $\QG$ will be denoted as before  by $h$, and the coproduct of $\Alg \odot \Alg^{\cop}$ by $\ComGG$.

Throughout this section, apart from Proposition \ref{Kac=expected}, we assume that $\QG$ is of Kac type. Note that it  means in particular that the antipode $S$ commutes with the adjoint operation.

We will employ regularly the Sweedler notation: $\Com(a) = a_{(1)} \ot a_{(2)}$, $a \in \Alg$, and also  occasionally the `legs' notation for elements of multiple tensor products.

\begin{prop}\label{Diagonal}
The formula
\[\phi_D(a \otimes \widetilde{b}) = h(aSb), \;\;\; a, b \in \Alg,\]
defines (by the linear extension) an idempotent state on $\Alg \odot \Alg^{\cop}$.
\end{prop}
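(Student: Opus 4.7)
The plan is to verify three things in turn: that $\phi_D$ is well-defined and unital, that it is positive (so a state), and that it is convolution-idempotent. Well-definedness is automatic from the bilinearity of $(a,b) \mapsto h(a\,Sb)$, while unitality is immediate from $\phi_D(1 \otimes \widetilde{1}) = h(S(1)) = h(1) = 1$.

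For positivity, fix an arbitrary $x = \sum_i a_i \otimes \widetilde{b}_i \in \Alg \odot \Alg^{\cop}$. Since the $\cop$-twist only affects the coalgebra data, the $*$-algebra structure of $\Alg^{\cop}$ coincides with that of $\Alg$, and hence $x^*x = \sum_{i,j} a_i^* a_j \otimes \widetilde{b_i^* b_j}$. Under the Kac hypothesis, $S$ is an involutive $*$-antihomomorphism, so $S(b_i^* b_j) = S(b_j)S(b_i)^*$; also $h$ is a trace. Putting these together, I expect to rewrite
\[
\phi_D(x^*x) = \sum_{i,j} h\bigl(a_i^* a_j \, S(b_j)S(b_i)^*\bigr) = \sum_{i,j} h\bigl((a_i S(b_i))^*(a_j S(b_j))\bigr) = h(c^*c),
\]
where $c := \sum_i a_i S(b_i) \in \Alg$, which is non-negative because $h$ is a state. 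Both steps rely crucially on the Kac assumption.

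For idempotence, I need $(\phi_D \star \phi_D)(a \otimes \widetilde{b}) = \phi_D(a \otimes \widetilde{b})$. The coproduct $\ComGG$ on the tensor product Hopf algebra, once the $\cop$-twist in the second factor is unwound, should take the form
\[
\ComGG(a \otimes \widetilde{b}) = a_{(1)} \otimes \widetilde{b_{(2)}} \otimes a_{(2)} \otimes \widetilde{b_{(1)}},
\]
so that $(\phi_D \otimes \phi_D) \circ \ComGG(a \otimes \widetilde{b}) = h(a_{(1)}S(b_{(2)}))\,h(a_{(2)}S(b_{(1)}))$. Next I invoke the standard Hopf identity $\Com \circ S = (S \otimes S) \circ \Sigma \circ \Com$, which gives $\Com(Sb) = S(b_{(2)}) \otimes S(b_{(1)})$; multiplying by $\Com(a)$ yields $\Com(a\, Sb) = a_{(1)}S(b_{(2)}) \otimes a_{(2)}S(b_{(1)})$. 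Finally, the left-invariance of $h$ implies $(h \otimes h) \circ \Com = h$, which collapses the product of two Haar values into $h(a\,Sb) = \phi_D(a\otimes \widetilde{b})$, proving the idempotence.

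The main obstacle I expect is essentially bookkeeping: keeping the $\cop$-flip in $\ComGG$ straight and matching the two Sweedler legs so that the product $h(a_{(1)}S(b_{(2)}))\,h(a_{(2)}S(b_{(1)}))$ assembles into $(h\otimes h)\circ \Com$ applied to $a\,Sb$. Once that alignment is in place, both the positivity (by the trace trick) and the idempotence (by left invariance of $h$) follow cleanly, and the Kac assumption is used only to ensure that $S$ is a $*$-map and that $h$ is tracial.
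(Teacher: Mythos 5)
Your proof is correct and follows essentially the same route as the paper: unitality is immediate, positivity comes from the trace trick rewriting $\phi_D(x^*x)$ as $h(c^*c)$ (the paper does this only for elementary tensors and appeals to ``linearity,'' so your version for general sums $\sum_i a_i\otimes\widetilde{b}_i$ is in fact slightly more careful), and idempotence uses exactly the same chain: the co-opposite twist in $\ComGG$, the anti-comultiplicativity $\Com(Sb)=S(b_{(2)})\otimes S(b_{(1)})$, and the invariance of $h$ to collapse $h(a_{(1)}S(b_{(2)}))\,h(a_{(2)}S(b_{(1)}))$ to $h(aSb)$. No gaps.
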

\begin{proof}
Normalisation $\phi_D(1 \ot 1) = 1$ is obvious. Further for $a, b \in \Alg$,
\begin{align*}\phi_D((a \ot \widetilde{b})^* (a \ot \wt{b})) &= \phi_D(a^*a \ot \widetilde{b^*b}) = h(a^*a S(b^*b)) = h(a^* a S(b) S(b)^*) \\&= h(aS(b) S(b)^* a^* ) \geq 0, \end{align*}
which, by linearity, implies that $\phi_D$ is positive on $\Alg \odot \Alg^{\cop}$.
Finally we have
\begin{align*} (\phi_D \star \phi_D)(a \ot \widetilde{b}) &= (\phi_D \ot \phi_D) (\ComGG(a \ot \widetilde{b})) = (\phi_D \ot \phi_D) (a_{(1)} \ot \wt{b_{(2)}} \ot a_{(2)} \ot \wt{b_{(1)}})
\\&= h(a_{(1)} S(b_{(2)})) h ( a_{(2)} S(b_{(1)})) = h(a_{(1)} (Sb)_{(1)}) h ( a_{(2)} (Sb)_{(2)}) \\&= h((aSb)_{(1)}) h((aSb)_{(2)})
= h(h((aSb)_{(1)}) (aSb)_{(2)})= h(aSb)
\\&=  \phi_D(a \ot \widetilde{b})
\end{align*}
\end{proof}

The above fact implies that the map
\[E_{\phi_D}:= (\id_{\Alg \ot \Alg^{\cop}} \ot \phi_D) \circ \ComGG:\Alg \odot \Alg^{\cop} \to \Alg \odot \Alg^{\cop}\]
is a conditional expectation, preserving the Haar state (see \cite{FS} or the previous subsection) -- in particular the image of $E_{\phi_D}$ is a unital $^*$-algebra.

Consider the following (injective!) map
\[ \widetilde{\Com}: \Alg \to \Alg \odot \Alg^{\cop}, \;\;\; \widetilde{\Com} = (\id_{\Alg} \ot \, \wt{}\,) \circ \Com. \]
In \cite{KShom} it is shown that the image of this map (or rather its von Neumann algebraic incarnation) is an `embeddable quantum homogeneous space' of $\QG \times \QG^{\cop}$.  In other words, the coproduct $\ComGG$ has the following property:
\[ \ComGG(\wt{\Com}(\Alg)) \subset \Alg \odot \Alg^{\cop} \odot \wt{\Com}(\Alg).\]
This fact follows also from the next theorem, note however that the observation of \cite{KShom}  does not require the Kac assumption and is valid (when formulated in the von Neumann algebraic language) for general \emph{locally compact} quantum groups.

Recall that the convolution product on $\Alg$ is defined by the formula (see for example \cite{BhSS})
\[ a \star b = h(S(b_{(1)}) a) b_{(2)}, \;\;\; a, b \in \Alg. \]

\begin{tw}\label{equalspaces}
We have the following equality (of subspaces of $\Alg \odot \Alg^{\cop}$):
\[ \widetilde{\Com} ( \Alg) = E_{\phi_D} (\Alg \odot \Alg^{\cop})\]
\end{tw}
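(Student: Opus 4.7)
The plan is to establish both inclusions by direct Sweedler-notation calculations, the Kac hypothesis entering in the second inclusion through the identity $S^2=\id$.

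For the inclusion $\widetilde{\Com}(\Alg)\subseteq E_{\phi_D}(\Alg\odot\Alg^{\cop})$: since $E_{\phi_D}$ is an idempotent onto its range, it suffices to verify that $E_{\phi_D}\circ\widetilde{\Com}=\widetilde{\Com}$. Writing $(\Com\otimes\Com)\Com(a)=\sum a_{(1)}\otimes a_{(2)}\otimes a_{(3)}\otimes a_{(4)}$, a direct unfolding of the definitions gives
\[
E_{\phi_D}(\widetilde{\Com}(a))=\sum h\bigl(a_{(2)}S(a_{(3)})\bigr)\,a_{(1)}\otimes\widetilde{a_{(4)}}.
\]
Regarding $a_{(2)}\otimes a_{(3)}$ as the coproduct of the middle Sweedler leg, the antipode axiom $m\circ(\id\otimes S)\circ\Com=\Cou(\cdot)\,1$ collapses $\sum a_{(2)}S(a_{(3)})$ to a scalar times the unit, and the counit identity $(\id\otimes\Cou\otimes\id)\circ(\Com\otimes\id)\circ\Com=\Com$ then returns $a_{(1)}\otimes\widetilde{a_{(2)}}=\widetilde{\Com}(a)$.

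For the reverse inclusion, unfolding the definitions yields
\[
E_{\phi_D}(a\otimes\widetilde{b})=\sum h\bigl(a_{(2)}S(b_{(1)})\bigr)\,a_{(1)}\otimes\widetilde{b_{(2)}}.
\]
The goal is to exhibit $\sum h(a_{(2)}S(b_{(1)}))\,a_{(1)}\otimes b_{(2)}\in\Alg\odot\Alg$ as lying in $\Com(\Alg)$. I would invoke the strong invariance of the Haar state,
\[ \sum_{(a)} a_{(1)}\,h(a_{(2)}x)\;=\;\sum_{(x)} S(x_{(1)})\,h(a\,x_{(2)}),\qquad a,x\in\Alg,\]
with $x=S(b_{(1)})$; expanding $\Com(S(b_{(1)}))$ via $\Com\circ S=(S\otimes S)\circ\Sigma\circ\Com$ and using $S^2=\id$ (Kac), after re-indexing by coassociativity this gives
\[
\sum h\bigl(a_{(2)}S(b_{(1)})\bigr)\,a_{(1)}\otimes b_{(2)}\;=\;\sum h\bigl(a\,S(b_{(1)})\bigr)\,b_{(2)}\otimes b_{(3)}\;=\;\Com(c),
\]
where $c:=\sum h(a\,S(b_{(1)}))\,b_{(2)}\in\Alg$; applying $\id\otimes(\wt{\phantom{x}})$ to both sides then produces $E_{\phi_D}(a\otimes\widetilde{b})=\widetilde{\Com}(c)$, and linearity finishes.

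The principal obstacle is the bookkeeping with iterated Sweedler indices in the reverse inclusion: one has to split $b_{(1)}$ further via coassociativity and carefully track where $S^2=\id$ is invoked. As a sanity check (and a slicker, self-contained alternative) I would also compute on Peter--Weyl matrix coefficients: for $a=u^\alpha_{ij}$, $b=u^\beta_{kl}$ the Kac orthogonality relation $h\bigl(u^\alpha_{rs}(u^\beta_{tu})^*\bigr)=\delta_{\alpha\beta}\delta_{rt}\delta_{su}/d_\alpha$ reduces the computation at once to
\[
E_{\phi_D}\bigl(u^\alpha_{ij}\otimes\widetilde{u^\beta_{kl}}\bigr)=\frac{\delta_{\alpha\beta}\delta_{jk}}{d_\alpha}\,\widetilde{\Com}(u^\alpha_{il}),
\]
from which both inclusions follow immediately from the Peter--Weyl linear basis of $\Alg$.
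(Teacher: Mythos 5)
Your proposal is correct and follows essentially the same route as the paper: the first inclusion via $E_{\phi_D}\circ\widetilde{\Com}=\widetilde{\Com}$ using the antipode and counit axioms, and the reverse inclusion via the strong invariance identity for $h$ together with $S^2=\id$ in the Kac case, which identifies $E_{\phi_D}(a\otimes\widetilde{b})$ with $\widetilde{\Com}(a\star b)$ exactly as in the paper's proof (and Corollary \ref{gammamap}). The Peter--Weyl computation you add as a sanity check is also correct and gives a clean independent verification.
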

\begin{proof}
We first show the inclusion $\subset$. Let $a \in \Alg$ and consider the following chain of equalities:
\begin{align*}
E_{\phi_D} (\widetilde{\Com}(a)) &= (\id_{\Alg\odot \Alg^{\cop}} \ot \phi_D) \circ \ComGG (a_{(1)} \ot \widetilde{a_{(2)}}) =
(\id_{\Alg\odot \Alg^{\cop}} \ot \phi_D)  (a_{(11)} \ot \widetilde{a_{(22)}} \ot a_{(12)} \ot \widetilde{a_{(21)}})
\\&=  a_{(11)} \ot \widetilde{a_{(22)}} h(a_{(12)} Sa_{(21)}) = a_{(1)} \ot \widetilde{a_{(4)}} h(a_{(2)} Sa_{(3)}) =  a_{(1)} \ot \widetilde{a_{(2)}} = \widetilde{\Com}(a) \end{align*}
This proves the first inclusion.

Let then now $a, b \in \Alg$. We claim that the following identity holds (in fact it can also be used to show the first inclusion):
\[ \wt{\Com}(a\star b) = E_{\phi_D} (a \ot \wt{b}).\]
Indeed, we have
\begin{align*} E_{\phi_D} (a \ot \wt{b})&=(\id_{\Alg\odot \Alg^{\cop}} \ot \phi_D) (a_{(1)} \ot \wt{b_{(2)}} \ot a_{(2)} \ot \wt{b_{(1)}} )=
a_{(1)} \ot \wt{b_{(2)}} h( a_{(2)} S(b_{(1)}))  \\&= (\id \ot h) (\Com(a) (1 \ot d)) \ot \wt{b_{(2)}},
\end{align*}
where $d = S(b_{(1)})$ (so that $S(d_{(1)}) = S(S(b_{(1)})_{(1)}) =(S^2(b_{(1)}))_{(2)} = b_{(12)}$ and $d_{(2)} = S(b_{(1)})_{(2)} = S(b_{(11)})$). Continuing the computations, and using the relation (see for example \cite{BhSS})
\[ S((\id \ot h)((1 \ot a) \Com(d)))= (\id \ot h) (\Com(a) (1 \ot d))\]
we get
\begin{align*}E_{\phi_D} (a \ot \wt{b})&=
S((\id \ot h) ((1 \ot a) (\Com(d))) \ot \wt{b_{(2)}}  = S(d_{(1)}) h(a d_{(2)}) \ot \wt{b_{(2)}}
\\&= b_{(12)} h(a S(b_{(11)})) \ot \wt{b_{(2)}} = h(a S(b_{(1)})) b_{(2)}\ot \wt{b_{(3)}}.
\end{align*}
When we compute the left hand side we obtain
\begin{align*}\wt{\Com}(a\star b) &= \wt{\Com} (h(S(b_{(1)}) a) b_{(2)}) = h(S(b_{(1)}) a) b_{(2)} \ot \wt{b_{(3)}}
\\&= h(b_{(1)} Sa) b_{(2)} \ot \wt{b_{(3)}}.
\end{align*}
This ends the proof.
\end{proof}

The above proof shows in particular the following fact.

\begin{cor}\label{gammamap}
The map $\gamma=\widetilde{\Com}^{-1} \circ E_{\phi_D}: \Alg \odot \Alg^{\cop} \to \Alg$ is given by the formula
\[ \gamma(a \ot \wt{b}) = a \star b, \;\;\;\; a, b \in \Alg.\]
Note that the map $\gamma$ is a quantum version of the map $\Gamma$ in \eqref{eq-Gammamap}.
\end{cor}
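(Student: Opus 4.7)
The plan is that this corollary is essentially a repackaging of a formula that has already been derived inside the proof of Theorem \ref{equalspaces}, so the argument should be extremely short.

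First I would check that $\gamma$ is well-defined as a linear map from $\Alg\odot\Alg^{\cop}$ into $\Alg$. Two points need to be verified: that $\wt{\Com}$ is injective on $\Alg$, so that $\wt{\Com}^{-1}$ is defined on $\wt{\Com}(\Alg)$; and that $E_{\phi_D}$ takes values in $\wt{\Com}(\Alg)$. Injectivity follows from the counit identity $(\id\ot\Cou)\circ\Com = \id$: applying $(\id \ot \Cou\circ\,\wt{}^{\,-1})$, where $\wt{}^{\,-1}:\Alg^{\cop}\to\Alg$ is the canonical vector space inverse of $\wt{}$, recovers the identity. The second point is exactly the content of Theorem \ref{equalspaces}, which asserts $E_{\phi_D}(\Alg\odot\Alg^{\cop}) = \wt{\Com}(\Alg)$.

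Second, the formula $\gamma(a\ot\wt b)=a\star b$ is not a new calculation: inside the proof of Theorem \ref{equalspaces} both $E_{\phi_D}(a\ot\wt b)$ and $\wt{\Com}(a\star b)$ were each computed to equal the same expression $h(b_{(1)}S(a))\,b_{(2)}\ot\wt{b_{(3)}}$ (using the tracial property of $h$ and, in the Kac setting, $S^2=\id$ to pass between the forms $h(aS(b_{(1)}))$ and $h(b_{(1)}S(a))$). Consequently the identity
\[
\wt{\Com}(a\star b) = E_{\phi_D}(a\ot\wt b), \qquad a,b\in\Alg,
\]
is already on the page; applying $\wt{\Com}^{-1}$ gives $\gamma(a\ot\wt b)=a\star b$ on elementary tensors, and linearity extends this to all of $\Alg\odot\Alg^{\cop}$.

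There is no real obstacle here — the genuine work was done in Theorem \ref{equalspaces}, and this corollary merely records the conclusion in a form that parallels the classical map $\Gamma$ from \eqref{eq-Gammamap}. The one thing worth flagging in the write-up is precisely that parallel: classically $\Gamma(u\ot v)=u*\check v$, and under the $\wt{}$-identification used here the quantum analogue becomes the convolution $\star$ on $\Alg$ recalled before the theorem, which is the structural point the corollary is intended to highlight.
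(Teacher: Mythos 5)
Your proposal is correct and matches the paper exactly: the paper's entire justification for this corollary is the remark ``The above proof shows in particular the following fact,'' relying on the identity $\wt{\Com}(a\star b)=E_{\phi_D}(a\ot\wt b)$ established inside the proof of Theorem \ref{equalspaces}, just as you say. Your additional check of injectivity of $\wt{\Com}$ (via the counit) is a point the paper simply asserts when the map is first introduced, so nothing is missing on either side.
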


As observed in \cite{KShom}, the homogeneous space $\widetilde{\Com} ( \Alg)$ is never of quotient type, unless $\Alg$ is abelian. We continue by noting the following easy fact: if $\QH, \QG$ are compact quantum groups, $\QG$ is of Kac type, and $\QH$ is a quantum subgroup of $\QG$, then $\QH$ must be of Kac type. Indeed, this follows for example from the fact that the Kac property is equivalent to the fact that the antipode commutes with the involution, and if $\pi:\Pol(\QG) \to \Pol(\QH)$ is the surjective Hopf $^*$-algebra morphism, then we must have $S_\QH \circ \pi = \pi \circ S_{\QG}$.

\begin{prop}
The following conditions are equivalent:
\begin{rlist}
\item $\widetilde{\Com} (\Alg)$ is of quotient type, i.e.\ there exists a closed quantum subgroup $\QH$ of $\QG \times \QG^{\op}$ with the corresponding morphism $\rho:\Alg \odot \Alg^{\cop} \to \Pol(\QH)$ such that $\widetilde{\Com} (\Alg)=\{x \in \Alg \odot \Alg^{\cop}: (\id\ot \rho) (\ComGG(x)) = x \ot 1\}$;
\item the state $\phi_D$ is a trace;
\item the state $\phi_D$ is a Haar idempotent;
\item $\Alg$ is abelian (i.e.\ $\QG$ is a classical group).
\end{rlist}
\end{prop}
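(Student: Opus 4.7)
My plan is to establish the cycle $(iv) \Rightarrow (i) \Leftrightarrow (iii) \Rightarrow (ii) \Rightarrow (iv)$, with the only algebraically substantive step being $(ii) \Rightarrow (iv)$.

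The implication $(iv) \Rightarrow$ the rest is essentially classical: if $\QG = G$ is a compact group, the formula defining $\phi_D$ specializes to $\phi_D(f \otimes \wt{g}) = \int_G f(x) g(x^{-1})\,dx$, which is precisely the Haar state of the honest closed subgroup $\{(x, x^{-1}) : x \in G\} \subset G \times G^{\op}$ (this really is a subgroup because the second factor uses the opposite multiplication). Hence (i), (ii) and (iii) all follow at once.

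The equivalence $(i) \Leftrightarrow (iii)$ is just unpacking definitions in view of the correspondence of Section~2. If $\wt{\Com}(\Alg)$ is of quotient type $\{x : (\id \otimes \rho)\ComGG(x) = x \otimes 1\}$, then the Haar-state-preserving conditional expectation onto it is $(\id \otimes (h_{\QH} \circ \rho)) \circ \ComGG$, so the associated idempotent state is $\phi_D = h_{\QH} \circ \rho$, a Haar idempotent; conversely, given such a description of $\phi_D$, Theorem~\ref{equalspaces} identifies $\wt{\Com}(\Alg)$ with the fixed-point algebra for the coaction induced by $\rho$, establishing quotient type. For $(iii) \Rightarrow (ii)$, recall from just before the statement that closed quantum subgroups of Kac type compact quantum groups are Kac; since $\QG \times \QG^\cop$ is Kac, so is $\QH$, whence $h_{\QH}$ is tracial, and tracility transfers through the $*$-homomorphism $\rho$ to $\phi_D$.

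Finally, for $(ii) \Rightarrow (iv)$, noting that $\Alg^\cop$ has the same underlying multiplication as $\Alg$, one computes
\[\phi_D((a \otimes \wt{b})(c \otimes \wt{d})) = h(ac\,S(d)S(b)), \qquad \phi_D((c \otimes \wt{d})(a \otimes \wt{b})) = h(ca\,S(b)S(d)),\]
using anti-multiplicativity of $S$. Combining the trace condition on $\phi_D$ with traciality of $h$ (to shift $S(b)$ to the front on the left) yields
\[h(S(b)\, ac\, S(d)) = h(ca\, S(b)\, S(d)) \quad \text{for all } a, b, c, d \in \Alg.\]
Letting $e := S(d)$ range over all of $\Alg$ (by bijectivity of $S$) and invoking faithfulness of $h$ then forces $S(b)\, ac = ca\, S(b)$ for all $a, b, c \in \Alg$. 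Taking $a = 1$ shows every $S(b)$ is central, and surjectivity of $S$ propagates this to give $\Alg$ commutative, i.e.\ $\QG$ classical. No step presents a serious obstacle; the only care required is in tracking the multiplication in $\Alg \odot \Alg^\cop$ and the anti-multiplicativity of $S$ in the final computation.
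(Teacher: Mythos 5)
Your argument is correct; all four implications hold as you state them, and the cycle $(iv)\Rightarrow(iii)\Leftrightarrow(i)$, $(iii)\Rightarrow(ii)\Rightarrow(iv)$ does establish the full equivalence. The route differs from the paper's in one genuine respect. The paper runs $(i)\Rightarrow(ii)\Rightarrow(iii)\Rightarrow(i)$ together with $(iv)\Leftrightarrow(ii)$, and its step $(ii)\Rightarrow(iii)$ --- that a tracial idempotent state is a Haar idempotent --- is outsourced to Theorem~3.3 of \cite{FST}. You never need that implication: the only point at which you must actually exhibit a quantum subgroup is under hypothesis (iv), where the honest anti-diagonal $\{(x,x^{-1}):x\in G\}\subset G\times G^{\op}$ is available explicitly (and is indeed a subgroup precisely because the second factor carries the opposite multiplication, as you note), so that $\phi_D$ is visibly the pullback of its Haar state. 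This makes your proof self-contained where the paper's is not, at the negligible cost of that classical verification; the paper's $(iv)\Rightarrow(ii)$ is instead the one-line remark that $\Alg\odot\Alg^{\cop}$ is then commutative. The remaining steps coincide in substance with the paper's: $(i)\Leftrightarrow(iii)$ via uniqueness of the Haar-state-preserving expectation plus Theorem~\ref{equalspaces}; traciality of $\phi_D=h_{\QH}\circ\rho$ from the observation, recorded just before the proposition, that quantum subgroups of Kac-type quantum groups are Kac (the paper uses this inside $(i)\Rightarrow(ii)$, you inside $(iii)\Rightarrow(ii)$ --- same argument); and for $(ii)\Rightarrow(iv)$ the same faithfulness-of-$h$ cancellation: the paper tests the trace identity only against $a\otimes\widetilde{Sb}$ and $c\otimes\widetilde{1}$, obtaining $h(acb)=h(cab)$ at once, whereas you keep the fourth variable and then set $a=1$; your identity specializes to theirs at $d=1$, $b\mapsto Sb$, so the two computations are interchangeable.
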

\begin{proof}
(i)$\Longrightarrow$ (ii)
If $\widetilde{\Com}(\Alg)$ is of quotient type (as described in the proposition), then it is expected, and the conditional expectation onto it is of the form $(\id \ot h_{\QH} \circ \rho) \circ \ComGG$. This conditional expectation preserves the Haar state on $\Alg \odot \Alg^{\cop}$, so by the uniqueness and Theorem \ref{equalspaces} we must have $(\id \ot h_{\QH} \circ \rho) \circ \ComGG = E_{\phi_D}$ and further $\phi_D= h_{\QH} \circ \rho$. By the remark before the proposition $\QH$ is of the Kac type, so $h_{\QH}$ is tracial. So is therefore also $\phi_D$.

(ii)$\Longrightarrow$ (iii) This is an easy consequence of Theorem 3.3 in \cite{FST}.

(iii)$\Longrightarrow$ (i) We argue as in the proof of the first implication: if $\phi_D$ is a Haar idempotent, then there exists a a closed quantum subgroup $\QH$ of $\QG \times \QG^{\op}$ with the corresponding morphism $\rho:\Alg \odot \Alg^{\cop} \to \Pol(\QH)$ such that $\phi_D = h_{\QH} \circ \rho$. Then Theorem \ref{equalspaces} ends the proof.

(iv)$\Longrightarrow$ (ii) This is obvious, as then also $\Alg \odot \Alg^{\cop}$ is commutative.

(ii)$\Longrightarrow$ (iv) If $\phi_D$ is a trace, then for any $a, b, c \in \Alg$ we have
\[\phi_D((a \ot \widetilde{Sb})(c \ot \widetilde{1})) = \phi_D ((c \ot \widetilde{1})(a \ot \widetilde{Sb})),\]
that is $\phi_D(ac \ot \widetilde{Sb}) = \phi_D(ca \ot \widetilde{Sb})$, and further $h(acb) = h(cab)$. Faithfulness of $h$ implies then immediately that $ac=ca$.
\end{proof}

\begin{prop}\label{Kac=expected}
Assume that $\QG$ is a general compact quantum group.
The algebra $\widetilde{\Com} ( \Alg)$ is expected (i.e.\ there exists a positive unital projection $E:\Alg \odot \Alg^{\cop}\to \widetilde{\Com} ( \Alg)$ preserving the Haar state of $\Alg \odot \Alg^{\cop}$) if and only if $\QG$ is of Kac type.
\end{prop}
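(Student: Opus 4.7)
The ``if'' direction is already in hand: if $\QG$ is of Kac type, Proposition~\ref{Diagonal} produces the idempotent state $\phi_D$ on $\Alg\odot\Alg^{\cop}$ and Theorem~\ref{equalspaces} identifies the associated conditional expectation $E_{\phi_D}=(\id\ot\phi_D)\circ\ComGG$ as a Haar-preserving projection onto $\wt{\Com}(\Alg)$.

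For the converse, suppose that $E:\Alg\odot\Alg^{\cop}\to\wt{\Com}(\Alg)$ is a Haar-preserving positive unital idempotent. The strategy is to invoke Takesaki's conditional expectation theorem to obtain modular-group invariance of $\wt{\Com}(\Alg)$, and then unwind this invariance into a rigid constraint on the $Q_\alpha$. First, by the main theorem of Section~2 (equivalence (iii)$\Leftrightarrow$(v)) combined with Lemma~\ref{normalext}, $E$ extends to a normal Haar-preserving conditional expectation $\wt{E}$ on $\Linf(\QG\times\QG^{\cop})$ whose image $\wt{\mathfrak{A}}$ is the weak-$*$ closure of $\wt{\Com}(\Alg)$. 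Takesaki's theorem then forces $\wt{\mathfrak{A}}$ to be stable under the modular automorphism group of $h\ot h$. Since $\Alg^{\cop}$ and $\Alg$ share the same $^*$-algebra structure and Haar state, uniqueness of the modular group gives $(\sigma_t\ot\sigma_t)(a\ot\wt{b})=\sigma_t(a)\ot\wt{\sigma_t(b)}$ on $\Alg\odot\Alg^{\cop}$, where $\sigma_t$ is the modular group of $\Pol(\QG)$. The identity $\wt{E}|_{\Alg\odot\Alg^{\cop}}=E$ yields the intersection equality $\wt{\mathfrak{A}}\cap(\Alg\odot\Alg^{\cop})=\wt{\Com}(\Alg)$, whence $\wt{\Com}(\Alg)$ itself is $(\sigma_t\ot\sigma_t)$-invariant.

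Applying this invariance to a matrix coefficient and using the explicit formula for $\sigma_t$, one computes
\[
(\sigma_t\ot\sigma_t)\bigl(\wt{\Com}(u_{ij}^{\alpha})\bigr)
=\bigl(q_i(\alpha)q_j(\alpha)\bigr)^{it}\sum_{k=1}^{n_\alpha}q_k(\alpha)^{2it}\,u_{ik}^{\alpha}\ot\wt{u_{kj}^{\alpha}},
\]
which must lie in the $n_\alpha^2$-dimensional $(\alpha,\alpha)$-component of $\wt{\Com}(\Alg)$, spanned by $\{\wt{\Com}(u_{pq}^{\alpha})=\sum_k u_{pk}^{\alpha}\ot\wt{u_{kq}^{\alpha}}\}_{p,q}$. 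In terms of the natural basis $\{u_{ab}^{\alpha}\ot\wt{u_{cd}^{\alpha}}\}$ of the $(\alpha,\alpha)$-block of $\Alg\odot\Alg^{\cop}$, membership in $\wt{\Com}(\Alg)$ forces the coefficient of $u_{ab}^{\alpha}\ot\wt{u_{bd}^{\alpha}}$ to be independent of $b$; hence $q_k(\alpha)^{2it}$ must be constant in $k$, so all diagonal entries of $Q_\alpha$ coincide, and combined with $\Tr Q_\alpha=\Tr Q_\alpha^{-1}$ this yields $Q_\alpha=I_{n_\alpha}$. As this holds for every $\alpha\in\Irr(\QG)$, $\QG$ is of Kac type. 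The main obstacle is the transfer between the algebraic polynomial $^*$-algebra (where the matrix-coefficient computation lives) and the von Neumann algebraic setting (where Takesaki's theorem is available); the intersection identity together with the preservation of $\Pol(\QG)$ by $\sigma_t$ is what bridges the two.
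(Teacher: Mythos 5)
Your proof is correct, and its decisive computation --- that invariance of $\wt{\Com}(\Alg)$ under the modular group forces $q_k(\alpha)^{2it}$ to be independent of $k$, hence (after the normalization $\Tr(Q_\alpha)=\Tr(Q_\alpha^{-1})$) $Q_\alpha=I_{n_\alpha}$ for every $\alpha\in\Irr(\QG)$ --- is exactly the one the paper carries out, phrased slightly more cleanly through the coefficient of $u_{ab}^{\alpha}\ot\wt{u_{bd}^{\alpha}}$ rather than through the eigenvector condition $\wt{\sigma_t}(v_{kl}^{\alpha})=\lambda_t v_{kl}^{\alpha}$. Where you genuinely diverge is in how the modular invariance of $\wt{\Com}(\Alg)$ is obtained. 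The paper stays entirely algebraic: it identifies $E$ with convolution by an idempotent state $\phi$ on $\Alg\odot\Alg^{\cop}$, invokes the invariance of idempotent states under the scaling group (Theorem 3.5 of \cite{FST}) together with the relation $\Com\circ\sigma_t=(\sigma_t\ot\tau_{-t})\circ\Com$, and concludes directly that $E$ commutes with the modular automorphisms of $\Alg\odot\Alg^{\cop}$, so its image is invariant. You instead complete to the von Neumann level via the Section 2 correspondence and Lemma \ref{normalext}, apply Takesaki's theorem to the weak-$*$ closure, and descend again through the intersection identity $\widetilde{\mathfrak{A}}\cap(\Alg\odot\Alg^{\cop})=\wt{\Com}(\Alg)$, which you justify correctly (any $x$ in the intersection satisfies $x=\wt{E}(x)=E(x)\in\wt{\Com}(\Alg)$), together with the fact that $\sigma_t\ot\sigma_t$ preserves $\Alg\odot\Alg^{\cop}$. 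Both routes are sound; the paper's never leaves $\Pol$, while yours replaces the citation of the scaling-group invariance of idempotent states by the standard Takesaki criterion at the cost of a harmless round trip through $\Linf(\QG\times\QG^{\cop})$. The one point you leave implicit --- that a positive unital Haar-preserving projection onto a $^*$-subalgebra is automatically a conditional expectation given by convolution against an idempotent state --- is glossed over to the same degree in the paper itself (``essentially an algebraic version of Theorem 4.1 of \cite{FS}''), so it is not a gap relative to the intended level of detail.
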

\begin{proof}
Suppose that such a conditional expectation exists. Then it must be of the form $(\phi \ot \id)\circ \ComGG$, where $\phi \in S(\Alg \odot \Alg^{\cop})$ is an idempotent state (this is essentially an algebraic version of Theorem 4.1 of \cite{FS}). As $\phi$ commutes then with the scaling automorphism group of $\Alg \odot \Alg^{\cop}$ (Theorem 3.5 of \cite{FST}), and we have $\Com \circ \sigma_t = (\sigma_t \circ \tau_{-t}) \circ \Com$ for all $t \in \br$, we see that $E$ must commute with the modular automorphism group of $\Alg \odot \Alg^{\cop}$ (again, this is a purely algebraic version of the argument in Corollary 3.6 of \cite{FST}). In other words, $\widetilde{\Com} ( \Alg)$ must be invariant under the action of the modular automorphisms of $\Alg \odot \Alg^{\cop}$. Note however that the modular automorphism group of $\Alg \odot \Alg^{\cop}$ is determined by the formula
\[ \wt{\sigma_t}(a \ot \widetilde{b}) = \sigma_t(a) \ot \widetilde{\sigma_t(b)},\;\;\; a, b \in \Alg, t \in \br.\]
Recall that  we choose for each $\alpha \in \Irr(\QG)$ a representative $(u_{ij}^{\alpha})_{i,j =1}^{n_{\alpha}}$ so that there exist numbers $q_1(\alpha), \ldots, q_{n_{\alpha}}(\alpha) >0$ such that for each $t \in \br$ and $k,l=1,\ldots, n_{\alpha}$ we have
\[ \sigma_t(u_{kl}^{\alpha}) = \left( q_k(\alpha)q_l(\alpha) \right)^{it} u_{kl}^{\alpha};\]
in addition we have a normalization $\sum_{k=1}^{n_{\alpha}} q_k(\alpha)= \sum_{k=1}^{n_{\alpha}} q_k(\alpha)^{-1}$.
Then the space $\widetilde{\Com} ( \Alg)$ is spanned by (linearly independent!) elements of the form
\[ v_{kl}^{\alpha}:= \sum_{m=1}^{n_{\alpha}} u_{km}^{\alpha} \ot \wt{u_{ml}^{\alpha}}, \;\;\; \alpha\in \Irr (\QG),  k,l=1,\ldots, n_{\alpha}.\]
We have  for each $t \in \br$
\[ \wt{\sigma_t}(v_{kl}^{\alpha}) = \left( q_k(\alpha)q_m(\alpha)^2 q_l(\alpha) \right)^{it}\sum_{m=1}^{n_{\alpha}} u_{km}^{\alpha} \ot \wt{u_{ml}^{\alpha}}. \]
By linear independence of individual elements $u_{ij}^{\alpha}$ inside $\Alg$ the only possibility for $\wt{\sigma_t}(v_{kl}^{\alpha})$ to belong to $\widetilde{\Com} ( \Alg)$ is then equality $\wt{\sigma_t}(v_{kl}^{\alpha}) = \lambda_t v_{kl}^{\alpha}$ for some $\lambda_t \in \bc$. This in turn, using linear independence once again, requires that $q_m(\alpha)^{2it}$ does not depend on $m$ for each fixed $t \in \br$. This is possible if and only if in fact $q_m(\alpha)$ are constant as functions of $m$. This fact however forces them all to be equal to $1$, the modular group of $h$ trivialise and $\QG$ must be of Kac type.

\end{proof}

\subsection{Classical interpretation, alternative pictures} \label{classical}

In this subsection we describe how the considerations above should be interpreted in the classical context.
If $G$ is a compact group, then there is a natural isomorphic identification between the Hopf $^*$-algebras $\Alg$ and $\Alg^{\cop}$, implemented by the map $\check{}:\Pol(G) \to \Pol(G^{\cop})$ (which can be naturally defined also on the level of $\C(G)$ or $L^{\infty}(G)$). It is defined by the formula
\[ \check{f} = \widetilde{Sf}, \;\;\;f \in \Pol(G),\]
Thus if we transport the concepts from the last subsection to the framework involving only $\Alg = \Pol(G)$, defining
\[ \check{\phi}_D = \phi_D \circ ( \id \ot \,\check{}\, ): \Alg \odot \Alg \to \bc,\]
\[ \check{\Com} = ( \id_{\Alg} \ot(\,\check{}\,)^{-1} )\circ \widetilde{\Com}: \Alg \to \Alg \odot \Alg\]
\[ E_{\check{\phi}_D}= (\id_{\Alg \odot \Alg} \ot \check{\phi_D}) \circ \Com^{2}: \Alg \odot \Alg \to \Alg \odot \Alg,\]
\[\check{\gamma}= \check{\Com}^{-1} \circ E_{\check{\phi}_D}  \]
where $\Com^{2}: \Alg \odot \Alg \to \Alg \odot \Alg \odot \Alg \odot \Alg$ is the usual `Cartesian' coproduct, i.e.\ $\Com^{2}= \Sigma_{23} \circ (\Com \ot \Com)$,
we immediately see that, as $(\,\check{}\,)^{-1} \circ \, \widetilde{}\,=S$,
\[ \check{\phi}_D (f_1 \ot f_2) = \phi_D(f_1 \ot \wt{S f_2}) =  h(f_1 S^2f_2)=h(f_1 f_2) = \int_G f_1(g) f_2(g) dg,\;\;\; f_1, f_2 \in \Pol (G),\]
and $E_{\check{\phi}_D}$ corresponds to the averaging of functions on $G \times G$ over the diagonal:
\[ (E_{\check{\phi}_D}(f_1 \ot f_2))(g_1, g_2) = \int_G f_1(g_1g) f_2(g_2g) dg,\;\;\; f_1, f_2 \in \Pol (G), g_1, g_2 \in G,\]
\[\check{\Com} =  (\id_{\Alg} \ot S)\circ \Com.\]
Finally we have for the map introduced in Corollary \ref{gammamap}
\begin{align*} \check{\gamma} &= \check{\Com}^{-1} \circ E_{\check{\phi}_D}  = \wt{\Com}^{-1}\circ  (\id_{\Alg} \ot \,\check{}\,) \circ  (\id_{\Alg \ot \Alg} \ot \check{\phi}_D) \circ \Com^{2}
\\&= \wt{\Com}^{-1}\circ  (\id_{\Alg} \ot \,\check{}\,) \circ  (\id_{\Alg \ot \Alg}\ot \phi_D \circ ( \id \ot \,\check{}\, ))\circ \Com^{2}
\\&=  \wt{\Com}^{-1}\circ (\id_{\Alg \odot \Alg^{\cop}} \ot \phi_D) \circ (\id_{\Alg} \ot \,\check{}\, \ot \id_{\Alg} \ot \,\check{}\,) \circ \Com^{2}
\\& = \wt{\Com}^{-1}\circ (\id_{\Alg \odot \Alg^{\cop}} \ot \phi_D)  \circ \ComGG \circ (\id_{\Alg} \ot \,\check{}\,) = \gamma \circ (\id_{\Alg} \ot \,\check{}\,), \end{align*}
so for all $a, b \in \Alg$
\[\check{\gamma} (a\ot b) = \gamma (a \ot \check{b})= \gamma (a \ot \wt{Sb})=a\star Sb,\]
or more explicitly
\[ (\check{\gamma} (f_1 \ot f_2)) (g) = \int_{G} f_1(g_1) f_2(g^{-1}g_1) dg_1 = f_1*\check{f_2}(g).\]

Thus, we recover the map $\Gamma$ from \eqref{eq-Gammamap}, i.e. $\check{\gamma} = \Gamma$ on $\Alg$.

\subsection{Hierarchy of properties and beyond the Kac case}

In view of the arguments presented in this section we obtain the following hierarchy of properties:

\begin{itemize}
\item if $G$ is an abelian compact group, then we naturally have the diagonal subgroup $G_{diag}$, which is a normal subgroup of $G \times G$;
\item if $G$ is a classical, but not abelian compact group, we still have the diagonal subgroup $G_{diag}\subset G \times G$, but it is no longer normal;
\item if $\QG$ is a compact quantum group of Kac type, then we have a natural idempotent state $\phi_D$ on $\QG \times \QG^{\cop}$ which classically corresponds to integrating over the diagonal subgroup, and the associated (algebraic) homogeneous space of $\QG$ given by the formula $\wt{\Com}(\PolQG)$, see Theorem \ref{equalspaces};
\item if $\QG$ is a compact quantum group not of Kac type, then we still have a natural (algebraic) homogeneous space of $\QG$ given by the formula $\wt{\Com}(\PolQG)$, but it is no longer expected -- in other words, we do not have an associated idempotent state.
\end{itemize}

Note that as explained in Section \ref{Notations} we have a natural isomorphism of compact quantum groups $\QG^{\cop}$ and  $\QG^{\op}$.

\begin{rem}
When $\QG$ is not Kac, then $\phi_D$ is still unital and idempotent, with the same proof as in Proposition \ref{Diagonal}, but it is no longer positive. On the other hand,
\[
\phi_{D,t}(a\ot \wt{b}) = h\big(\sigma_{t+i/2}(a)Rb\big), \quad a,b\in\Alg,
\]
defines a state on $\Alg\odot\Alg^\cop$ for any $t\in\mathbb{R}$, since
\begin{align*}
\phi_{D,t}((a \ot \widetilde{b})^* &(a \ot \wt{b})) = \phi_{D,t}(a^*a \ot \widetilde{b^*b}) = h\big(\sigma_{t+i/2}(a^*a) R(b^*b)\big) \\
&= h\big(\sigma_{t+i/2}(a^*) \sigma_{t+i/2}(a) R(b) R(b)^*\big) = h\big(\sigma_{t+i/2}(a) R(b) R(b)^* \sigma_{t-i/2}(a^*) \big)\\
& = h\Big(  \sigma_{t+i/2}(a)R(b)\big( \sigma_{t+i/2}(a)R(b)\big)^*\Big) \geq 0.
\end{align*}
But $\phi_{D,t}$ is never idempotent. It is clear from Proposition \ref{Kac=expected} that no idempotent state corresponding to integration over the diagonal subgroup can exist in the general case, since the corresponding homogeneous space is expected if and only if we are in the Kac case.
\end{rem}

\section{Construction of a Fourier-like algebra $\A_\Delta(\QG)$}

In this section we introduce the algebra $\A_\Delta(\QG)$ associated to a compact quantum group $\QG$, the analog of the algebra $\A_\Delta(G)$ associated to a classical compact group in \cite{FSS}, motivate the construction and show it yields a completely contractive Banach algebra. Basically we will follow the classical construction, focusing firstly on the $\PolQG$-level. Throughout the sections we refer to Subsection \ref{discreteintro} for the notations used.

\subsection{Definition of $\A_\Delta(\QG)$ as an operator space}

Let $\QG$ be a compact quantum group of Kac type. In the last section we constructed the map \eqref{gammamap}
	$$\gamma=\widetilde{\Com}^{-1} \circ E_{\phi_D}: \PolQG \odot \Pol(\QG^{\cop}) \to \PolQG,\; a \ot \wt{b} \mapsto a \star b, \;\;\;\; a, b \in \PolQG.$$
Following the classical situations we would like to extend $\gamma$ to a map
	$$\gamma: \A(\QG)\prt \A(\QG^{\cop}) \to \A(\QG)$$
and define the algebra $\A_\Delta(\QG)$ as the range of the extended $\gamma$ with the operator space structure making $\gamma$ a complete quotient map.

In order to get a norm formula like \eqref{eq-norm-A_Delta} we need a more detailed understanding of the relationship between spaces $\A(\QG^{\cop})$ and $\A(\QG)$. Note that the representation theory of $\QG^{\cop}$ is naturally isomorphic to that of $\QG$, with the correspondence implemented by the antipode. More precisely, for any representation $U = [u_{ij}]^n_{i,j=1} \in M_n(\PolQG)$ of $\QG$ we can easily check that $[S(u_{ij})]^n_{i,j=1}$ is a representation of $\QG^{\cop}$ and this correspondence is clearly invertible and preserves unitarity and irreducibility, so that we can conclude
	$$\Irr(\QG) \cong \Irr(\QG^{\cop}),\;\;\;\; U^\alpha \mapsto (I_{n_\alpha}\otimes S)(U^\alpha).$$
Moreover, we have the corresponding multiplicative unitary $U^{\cop}$ of $\QG^{\cop}$ given by
	$$U^{\cop} = \bigoplus_{\alpha \in \Irr(\QG)} \sum^{n_\alpha}_{i,j=1}S(u^{\alpha_{ij}}) \otimes e^{\alpha}_{ij}.$$
Thus, we have
	\begin{align*}
		\F^{\cop}(\widetilde{Sa})
		& = (\widetilde{Sa}\cdot h\otimes I)U^{\cop}\\
		& = \bigoplus_{\alpha \in \Irr(\QG)} \sum^{n_\alpha}_{i,j=1} h(S(u^{\alpha_{ij}}) S(a))e^{\alpha}_{ij}\\
		& = \bigoplus_{\alpha \in \Irr(\QG)} \sum^{n_\alpha}_{i,j=1} h(u^{\alpha_{ij}} a)e^{\alpha}_{ij}\\
		& = \F(a)
	\end{align*}
for any $a\in \PolQG$. Once we take the embedding \eqref{eq-embedding} into account we get the following complete isometry.
	$$\A(\QG)  \cong \A(\QG ^{\cop}),\;\; a \mapsto \widetilde{Sa}.$$
Combining this with the map $\gamma$ we get the map
	$$\Phi: \PolQG \odot \PolQG \to \PolQG,\;\; a\otimes b \mapsto a\star(Sb).$$
Following the classical situation (see Subsection \ref{classical}) we call this map the {\it twisted convolution}, which can be easily extended to the Fourier algebra level as follows (with the same symbol):
	$$\Phi: \A(\QG) \prt \A(\QG) \to \A(\QG),\;\; a\otimes b \mapsto a\star(Sb).$$
	\begin{deft}
	We define the space $\A_\Delta(\QG)$ by
		$$\A_\Delta(\QG) := \text{ran}\, \Phi \subseteq \A(\QG).$$
We endow the space $\A_\Delta(\QG)$ with the operator space structure  which makes $\Phi$ a complete quotient map.
	\end{deft}

	\begin{tw}\label{thm-A_Delta-Banach}
	We have
		$$\A_\Delta(\QG) \cong \ell^1\text{-}\bigoplus_{\alpha \in \Irr(\Gb)} n_\alpha^{3/2} (S^2_{n_\alpha})_r$$
	completely isometrically. In particular, we have (recalling the notation of Subsection \ref{discreteintro}) for all $X \in c_{00}(\hQG)$
		\begin{equation}\label{eq-A_Delta}
		\|X\cdot \widehat{h} \|_{\A_\Delta(\QG)} = \sum_\alpha n^{3/2}_\alpha \|X_\alpha \|_{S^2_{n_\alpha}}.
		\end{equation}
	\end{tw}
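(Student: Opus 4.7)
The plan is to unravel the map $\Phi$ in Fourier coordinates, reduce the problem to a single quotient computation on each $(\bar\alpha,\alpha)$-block, and identify that quotient by direct dualisation.

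First I would compute $\Phi$ on matrix coefficients. In the Kac case $Su^\alpha_{ij}=u^{\bar\alpha}_{ji}$, and a direct use of Peter-Weyl orthogonality gives
\[
\Phi(u^\beta_{ij}\otimes u^\alpha_{kl})=u^\beta_{ij}\star Su^\alpha_{kl}=\frac{\delta_{\bar\beta,\alpha}\,\delta_{jl}}{n_\alpha}\,u^{\bar\alpha}_{ik}.
\]
Hence $\Phi$ is block-supported: on the $(\beta,\alpha)$-summand of $\A(\QG)\prt\A(\QG)$ it vanishes unless $\beta=\bar\alpha$, and on the $(\bar\alpha,\alpha)$-summand it lands in $\A(\QG)_{\bar\alpha}$. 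Translating through the embedding $u^\gamma_{pq}\leftrightarrow\frac{1}{n_\gamma}e^\gamma_{qp}\cdot\widehat{h}$ and collecting the scalings coming from $\|\cdot\|_{\A(\QG)_\gamma}=n_\gamma\|\cdot\|_{S^1_{n_\gamma}}$, the restriction of $\Phi$ to the $(\bar\alpha,\alpha)$-summand is identified with the linear map
\[
\tilde\Psi\colon M_n\otimes M_n\longrightarrow M_n,\qquad A\otimes B\mapsto B^TA,\qquad n:=n_\alpha.
\]

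Next I bring in standard operator-space identities: since $\ell^1$-direct sums distribute over $\prt$ and $(S^1_n\prt S^1_n)^*=CB(S^1_n,M_n)\cong M_n\otimes_{\min}M_n=M_{n^2}$, the natural Kronecker identification yields, completely isometrically,
\[
\A(\QG)\prt\A(\QG)\cong\ell^1\text{-}\bigoplus_{\alpha,\beta}n_\alpha n_\beta(S^1_{n_\alpha}\prt S^1_{n_\beta}),\qquad S^1_n\prt S^1_n\cong S^1_{n^2}.
\]
The identification of $\A_\Delta(\QG)$ thus reduces, at each fixed $\alpha$, to computing the quotient OS structure induced by $\tilde\Psi\colon S^1_{n^2}\to S^1_n$, and then reinstating the source scaling $n_\alpha^2$ and reassembling as an $\ell^1$-direct sum over $\bar\alpha$.

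The key computation is a direct dualisation of $\tilde\Psi$. A short matrix-unit calculation shows
\[
\tilde\Psi^*\colon M_n\to M_{n^2},\qquad Y\mapsto|Y\rangle\!\rangle\langle\phi|,
\]
where $|Y\rangle\!\rangle=\sum_{p,q}Y_{pq}|p\rangle\otimes|q\rangle$ is the vectorisation of $Y$ and $\phi=\sum_i|i\rangle\otimes|i\rangle\in\bc^{n^2}$. Thus the image of $\tilde\Psi^*$ is the rank-one subspace $\{|v\rangle\langle\phi|:v\in\bc^{n^2}\}\subset M_{n^2}=C_{n^2}\otimes_{\min}R_{n^2}$. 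Since $\phi$ is a fixed vector with $\|\phi\|=\sqrt n$, right-multiplication by $\langle\phi|\in R_{n^2}$ realises this image as $\sqrt n\cdot C_{n^2}$ inside $M_{n^2}$ (i.e.\ as a column subspace with all matrix norms multiplied by $\sqrt n$); combined with the standard vectorisation complete isometry $(S^2_n)_c\cong C_n\otimes_{\min}C_n=C_{n^2}$, the image of $\tilde\Psi^*$ with its inherited OS structure equals $\sqrt n\cdot(S^2_n)_c$. Passing to preduals, $\tilde\Psi$ is a complete quotient map from $S^1_{n^2}$ onto $\frac{1}{\sqrt n}(S^2_n)_r$, and reinstating the factor $n_\alpha^2$ from the source block gives $n_\alpha^2\cdot\frac{1}{\sqrt{n_\alpha}}(S^2_{n_\alpha})_r=n_\alpha^{3/2}(S^2_{n_\alpha})_r$ at the $\bar\alpha$-block. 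The $\ell^1$-direct sum structure on $\A_\Delta(\QG)$ follows from block-orthogonality of $\Phi$, yielding the norm formula \eqref{eq-A_Delta}.

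The main obstacle is the step identifying the image of $\tilde\Psi^*$ in $M_{n^2}$ as $\sqrt n\cdot(S^2_n)_c$ completely isometrically. This rests on two operator-space facts already recorded in the excerpt: vectorisation gives $(S^2_n)_c\cong C_n\otimes_{\min}C_n=C_{n^2}$ completely isometrically, and the decomposition $M_{n^2}=C_{n^2}\otimes_{\min}R_{n^2}$ makes right-multiplication by a fixed row vector of norm $\sqrt n$ a completely isomorphic embedding of $C_{n^2}$ onto the resulting column subspace with a uniform $\sqrt n$-scaling. Once these are in place, the assembly of the scaling factors is routine bookkeeping.
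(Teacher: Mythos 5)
Your proposal is correct and follows essentially the same route as the paper: both arguments dualise the quotient map $\Phi$, compute its adjoint on matrix units (your $\tilde\Psi^*(Y)=|Y\rangle\!\rangle\langle\phi|$ is exactly the paper's $\Phi^*(e^\alpha_{ij})=\frac{1}{n_\alpha}\sum_r e^\alpha_{ir}\otimes e^{\bar\alpha}_{jr}$ up to the Haar-weight normalisation), and identify the range block as a $\sqrt{n_\alpha}$-scaled column copy of $S^2_{n_\alpha}$ inside $M_{n_\alpha}\otimes M_{n_\alpha}$ before dualising back. The only cosmetic difference is that you strip off the weight scalings first and reinstate them at the end, while the paper carries them throughout and recognises the column structure via the $C_n\otimes R_n$ decomposition and the flip $\Sigma_{2,3}$.
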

\begin{proof}
We begin with the complete isometry
	$$\Phi^*:\A_\Delta(\QG)^* \to (A(\Gb) \prt\A(\QG))^* \cong \ell^{\infty}(\hQG)\overline{\otimes}\ell^{\infty}(\hQG).$$
Note that $c_{00}(\widehat{\QG}) \subseteq \A_\Delta(\QG)^*$ and we have (for $\alpha, \beta, \gamma \in \Irr(\QG)$ and suitable indices $i, j, k, l, p$ and $q$)
	$$\la \Phi^*(e^{\alpha}_{ij}), u^{\beta}_{kl} \otimes u^{\gamma}_{pq} \ra = \la e^{\alpha}_{ij},  u^{\beta}_{kl} \star S(u^{\gamma}_{pq})\ra.$$
Since we have
	\begin{align*}
		u^{\beta}_{kl} \star S(u^{\gamma}_{pq})
		& = u^{\beta}_{kl} \star u^{\bar{\gamma}}_{qp} \\
		& = (h\otimes I)\left[\left(S \otimes I)(\sum_s u^{\bar{\gamma}}_{qs} \otimes u^{\bar{\gamma}}_{sp})\right) \cdot (u^{\beta}_{kl} \otimes 1)\right] \\
		& = h\big( (u^{\bar{\gamma}}_{sq})^* u^\beta_{kl}\big) u^{\bar{\gamma}}_{sp} = \delta_{\beta, \bar{\gamma}} \delta_{q,l}\frac{1}{n_\beta} u^{\beta}_{kp},
	\end{align*}
where $\bar{\gamma}$ is the conjugate representation of $\gamma$, we can conclude that
	$$\la \Phi^*(e^{\alpha}_{ij}), u^{\beta}_{kl} \otimes u^{\gamma}_{pq} \ra = \frac{1}{n_\alpha}\delta_{\alpha, \beta}\delta_{\beta, \bar{\gamma}}\delta_{i,k}\delta_{j,p}\delta_{q,l},$$
which implies that
	$$\Phi^*(e^{\alpha}_{ij}) = \frac{1}{n_\alpha}\sum_{r=1}^{n_\alpha}e^{\alpha}_{ir}\otimes e^{\bar{\alpha}}_{jr}.$$
Now we note that
	\begin{align*}	\ell^\infty(\widehat{\Gb})\overline{\otimes}\ell^\infty(\widehat{\QG})
	& \cong \ell^\infty\text{-}\bigoplus_{\alpha, \beta \in \text{Irr}(\QG)}(M_{n_\alpha} \ot  M_{n_\beta})\\
	&\cong \ell^\infty\text{-}\bigoplus_{\alpha, \beta \in \Irr(\Gb)}(C_{n_\alpha} \ot  R_{n_\alpha} \ot  C_{n_\beta} \ot  R_{n_\beta}).
	\end{align*}
Let $\Sigma_{2,3}$ be the map flipping the 2nd and the 3rd tensor component at each direct summand, which is a complete isometry. We obtain then
	\begin{align*}
	\Phi^*(e^{\alpha}_{ij})
	& = \frac{1}{n_\alpha}\sum_{1\le r\le n_\alpha}e^{\alpha}_{ir}\otimes e^{\bar{\alpha}}_{jr}\\
	& \cong \frac{1}{n_\alpha}\sum_{1\le r\le n_\alpha}e^{\alpha}_i \otimes e^{\alpha}_r \otimes e^{\bar{\alpha}}_j \otimes  e^{\bar{\alpha}}_r\\
	& \stackrel{\Sigma_{2,3}}{\cong} \frac{1}{n_\alpha}\sum_{1\le r\le n_\alpha}e^{\alpha}_i \otimes e^{\bar{\alpha}}_j \otimes e^{\alpha}_r \otimes  e^{\bar{\alpha}}_r\\
	& \in \ell^\infty\text{-}\bigoplus_{\alpha\in \Irr(\Gb)}(C_{n_\alpha} \ot  C_{n_\alpha} \ot  R_{n_\alpha} \ot  R_{n_\alpha}).
	\end{align*}
Finally for a fixed $\alpha \in \Irr(\QG)$ and $X=(\delta_{\beta\alpha} X_\alpha)_{\beta \in \Irr(\QG)} \in c_{00}(\hQG)$  we have
	\begin{align*}
	\Phi^*(X)
	& = \sum_{i,j=1}^{n_\alpha} X_{\alpha}(i,j) \frac{1}{n_\alpha}\sum_{1\le r\le n_\alpha}e^{\alpha}_{ir}\otimes e^{\bar{\alpha}}_{jr}\\
	& \stackrel{\Sigma_{2,3}}{\cong} (\sum_{i,j=1}^{n_\alpha} X_{\alpha}(i,j) e^{\alpha}_i \otimes e^{\bar{\alpha}}_j)\otimes (\frac{1}{n_\alpha}\sum_{1\le r\le n_\alpha} e^{\alpha}_r \otimes  e^{\bar{\alpha}}_r)\\
	& = X_\alpha \otimes (\frac{1}{n_\alpha}\sum_{1\le r\le n_\alpha} e^{\alpha}_r \otimes  e^{\bar{\alpha}}_r).
	\end{align*}
Since we have
	$$||\frac{1}{n_\alpha}\sum_{1\le r\le n_\alpha} e^{\alpha}_r \otimes  e^{\bar{\alpha}}_r||_{R_{n_\alpha} \ot  R_{n_\alpha}} = n_\alpha^{-\frac{1}{2}}$$
we get
	$$\A_\Delta(\QG)^* \cong \ell^\infty\text{-}\oplus_{\alpha \in \Irr(\Gb)} n_\alpha^{-1/2} (S^2_{n_\alpha})_c$$
completely isometrically.	By the duality we get the conclusions we wanted.
\end{proof}

\subsection{$(\A_\Delta(\QG), \hat{\Delta}_*)$ is a completely contractive Banach algebra}
In this subsection we show that $(A_\Delta(\QG), \hat{\Delta}_*)$ is a completely contractive Banach algebra for any compact quantum group $\Gb$ of Kac type. Here,  $\hat{\Delta}$ is the co-multiplication of $L^\infty(\widehat{\Gb})$ and $\hat{\Delta}_*$ is defined only on $\PolQG \odot \PolQG$ at first, but will be extended to $A_\Delta(\QG) \prt\A_\Delta(\QG)$ later, with a slight abuse of notation. In order to show the above it is enough to check that
	$$\hat{\Delta} :\A_\Delta(\QG)^* \to (A_\Delta(\QG)\prt\A_\Delta(\QG))^*$$
is a complete contraction, which requires detailed understanding of $\hat{\Delta}$ in the matrix setting.

The next result is likely well-known to the experts. The following proof is due to Piotr So\l tan.
\begin{prop}\label{prop1}
	Let $\Gb$ be a general compact quantum group and $X = (X_\alpha)_{\alpha\in\Irr(\QG)} \in \ell^\infty(\hat{\mathbb{G}})$. Then for each $\beta, \gamma \in \Irr(\QG)$ we have the following unitary equivalence
		$$\hat{\Delta}(X)(\beta, \gamma) \cong \bigoplus_{\alpha \subseteq  \beta\ot\gamma} X_\alpha,$$
	where $\alpha \subseteq \beta\ot\gamma$ means that $u^{\alpha}$ appears in the irreducible decomposition of $u^{\beta} \otimes u^{\gamma}$.
	\end{prop}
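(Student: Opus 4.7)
The plan is to identify $\hh{\Com}(X)(\beta,\gamma)$ as the operator representing the action of $X \in \ell^\infty(\hQG)$ on $H_\beta \ot H_\gamma$ under the tensor product representation $U^\beta \ot U^\gamma$ of $\QG$, and then to invoke the standard decomposition of that tensor product into irreducibles.

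For each finite-dimensional representation $V$ of $\QG$ on a Hilbert space $H_V$ one has a canonical unital $^*$-homomorphism $\pi_V: \ell^\infty(\hQG) \to B(H_V)$, which for irreducible $V = U^\alpha$ is simply the projection $\pi_\alpha(X) = X_\alpha$. Unpacking the multiplicative unitary relation \eqref{comU} by pairing against matrix units in the two dual legs, one verifies that for any two representations $V_1, V_2$ of $\QG$,
$$\pi_{V_1 \ot V_2}(X) = (\pi_{V_1} \ot \pi_{V_2})(\hh{\Com}(X)).$$
Specialising to $V_1 = U^\beta$, $V_2 = U^\gamma$, the right-hand side is precisely the $(\beta,\gamma)$-component $\hh{\Com}(X)(\beta,\gamma)$ under the identification $\ell^\infty(\hQG) \wot \ell^\infty(\hQG) \cong \prod_{\beta',\gamma'} M_{n_{\beta'}} \ot M_{n_{\gamma'}}$.

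Next, the Peter-Weyl theory for compact quantum groups provides a unitary intertwiner $W_{\beta,\gamma}: H_\beta \ot H_\gamma \to \bigoplus_{\alpha \subseteq \beta \ot \gamma} H_\alpha$ (with multiplicities implicit) between $U^\beta \ot U^\gamma$ and $\bigoplus_\alpha U^\alpha$. Since the assignment $V \mapsto \pi_V$ is natural under direct sums and unitary intertwiners, conjugation by $W_{\beta,\gamma}$ sends $\pi_{U^\beta \ot U^\gamma}(X) = \hh{\Com}(X)(\beta,\gamma)$ onto $\bigoplus_\alpha \pi_\alpha(X) = \bigoplus_\alpha X_\alpha$, which is the stated unitary equivalence.

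The only genuine obstacle is bookkeeping: matching the conventions for the multiplicative unitary (the ordering of the legs, the placement of $U_{12}$ versus $U_{13}$), and carrying the correct multiplicities through the direct sum on the right. Conceptually the proposition is essentially a reformulation of the principle that $\hh{\Com}$ is the coproduct dual to, and encoding, the tensor product of representations of $\QG$.
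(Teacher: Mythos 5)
Your argument is correct and rests on the same two ingredients as the paper's proof: the relation \eqref{comU} identifying $\hh{\Com}(X)(\beta,\gamma)$ with the evaluation of $X$ at the tensor product representation $U^\beta\ot U^\gamma$, and the Clebsch--Gordan unitary implementing its decomposition into irreducibles. The paper carries out the "unpacking" you defer explicitly (computing $\hh{\Com}(e^\alpha_{ij})$ in terms of the coefficients of the decomposing unitary and then extending by linearity and normality), whereas you package the same computation as naturality of $V\mapsto\pi_V$ under intertwiners; this is a presentational difference, not a different proof.
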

\begin{proof}
For $\beta,\gamma\in\Irr(\QG)$ let $V(\beta,\gamma) : \bigoplus_{\alpha\subseteq \beta\ot\gamma} H_\alpha \to H_\beta \otimes H_{\gamma}$ be the unitary map providing  the irreducible decomposition of $u^{\beta} \otimes u^{\gamma}$, in our choice of representatives. Here $H_\beta$ refers to the Hilbert space associated to the representation $\beta$, i.e. $u^\beta \in B(H_\beta)\ot \PolQG$. This means that $V$ satisfies the equation
\begin{equation}\label{eq-ClG-decomp}
u^\beta\ot u^\gamma = V \left( \bigoplus_{\alpha \subseteq  \beta\ot\gamma} u^\alpha \right) V^*.
\end{equation}
Explicitely, in terms of the coefficients of the irreducible representations, this means that the $n_\beta n_\gamma\times n_\beta n_\gamma$-matrix
\[
V(\beta,\gamma) = \big( V(\beta,\gamma)^{\alpha,n)}_{k,p}\big)^{\alpha\subseteq\beta\otimes\gamma,1\le n\le n_\alpha}_{1\le k\le n_\beta,1\le p\le n_\gamma}
\]
is unitary and
\begin{equation}\label{eq-decomp}
u^\beta_{kl} u^\gamma_{pq} = \sum_{\alpha\subseteq\beta\otimes\gamma} \sum_{1\le n,m\le n_\alpha} V(\beta,\gamma)_{kp}^{\alpha,n} e^\alpha_{nm} (V(\beta,\gamma)^{\alpha,m}_{lq})^*
\end{equation}
for $1\le k,l\le n_\beta$, $1\le p,q\le n_\gamma$. This is a square matrix, since
	$$\dim \oplus_{\alpha\subseteq \beta\ot\gamma} H_\alpha = \sum_{\alpha\subseteq\beta\ot\gamma} n_\alpha = n_\beta n_\gamma = \dim H_\beta\ot H_\gamma.$$

We can write the multiplicative unitary as $U= \sum_{\alpha\in\Irr(\QG)} \sum^{n_\alpha}_{i,j=1}e^{\alpha}_{ij} \otimes u^{\alpha}_{ij}$. From the equality \eqref{comU}
we get
	$$ \sum_{\alpha\in\Irr(\QG)} \sum^{n_\alpha}_{i,j=1}\hat{\Delta}(e^{\alpha}_{ij}) \otimes u^{\alpha}_{ij} =\sum_{\beta,\gamma\in\Irr(\QG)} \sum^{n_\beta}_{k,l=1}\sum^{n_{\gamma}}_{p,q=1}e^{\beta}_{kl} \otimes e^{\gamma}_{pq} \otimes u^{\beta}_{kl}u^{\gamma}_{pq}.$$

Substituting Equation \eqref{eq-decomp} and using the linear independence of the $u^\alpha_{ij}$, we get
\[
\hat{\Delta}(e^\alpha_{ij}) = \sum_{\beta,\gamma\in\Irr(\QG)} \sum_{k,l=1}^{n_\beta}\sum_{p,q=1}^{n_\gamma} V(\beta,\gamma)^{\alpha,i}_{kp} e^\beta_{kl}\ot e^\gamma_{pq} (V(\beta,\gamma)^{\alpha,j}_{lq})^*.
\]
where we use the convention $V(\beta,\gamma)_{kp}^{\alpha,n}=0$ if $\alpha\not\subseteq\beta\otimes\gamma$.

Linearity implies that for for a general element $X= \sum_{\alpha\in\Irr(\QG)} \sum_{1\le i,j\le n_\alpha} x^\alpha_{ij} e^\alpha_{ij} \in c_{00}(\hQG)$ (which by normality suffices to complete the proof)
\[
\hat{\Delta}(X) = \sum_{\beta,\gamma\in\Irr(\QG)} \sum_{k,l=1}^{n_\beta}\sum_{p,q=1}^{n_\gamma} V(\beta,\gamma)^{\alpha,i}_{kp}  x^\alpha_{ij} (V(\beta,\gamma)^{\alpha,j}_{lq})^*e^\beta_{kl}\ot e^\gamma_{pq}.
\]
Extracting the $(\beta,\gamma)$-component of $\hat{\Delta}(X) = \big(\hat{\Delta}(X)(\beta,\gamma)\big)_{\beta,\gamma\in\Irr(\QG)}\in \prod_{\beta,\gamma\in\Irr(\QG)} M_{n_\beta}\ot M_{n_\gamma}$, we get the desired formula
\[
\hat{\Delta}(\beta,\gamma) = V(\beta,\gamma) \left(\oplus_{\alpha\subseteq\beta\ot\gamma} X^\alpha \right) V(\beta,\gamma)^*.
\]
\end{proof}	

\begin{rem}
The above proposition could serve as an explanation of the existence of the extension of $\hh{\Com}$ mentioned in \eqref{eq-unbdd-extension-coprod}.
\end{rem}

	\begin{tw} \label{KacFourier}
	The pair $(A_\Delta(\QG), \hat{\Delta}_*)$ is a completely contractive Banach algebra.
	\end{tw}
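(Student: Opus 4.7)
My approach is to prove the dual statement: that the coproduct
\[
\hh{\Com}: \A_\Delta(\QG)^* \to (\A_\Delta(\QG)\prt\A_\Delta(\QG))^* \cong \A_\Delta(\QG)^* \vnt \A_\Delta(\QG)^*
\]
is a complete contraction. By standard operator space duality this is equivalent to the multiplication $\hh{\Com}_*$ being a complete contraction on $\A_\Delta(\QG)\prt\A_\Delta(\QG)$ with values in $\A_\Delta(\QG)$; the fact that the image lands in $\A_\Delta(\QG)$ (and not just in $\A(\QG)$) will be read off the explicit matrix formulas once the norm estimate is in place.

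The key input is the identification from Theorem \ref{thm-A_Delta-Banach}: dualising the formula for $\A_\Delta(\QG)$ yields
\[
\A_\Delta(\QG)^* \cong \ell^\infty\text{-}\bigoplus_{\alpha \in \Irr(\QG)} n_\alpha^{-1/2} (S^2_{n_\alpha})_c,
\]
and the predual property of the projective/injective duality together with the fact that $(S^2_n)_c \vnt (S^2_m)_c \cong (S^2_n \ot_2 S^2_m)_c$ gives
\[
(\A_\Delta(\QG)\prt\A_\Delta(\QG))^* \cong \ell^\infty\text{-}\bigoplus_{\beta,\gamma\in\Irr(\QG)} (n_\beta n_\gamma)^{-1/2} (S^2_{n_\beta}\ot_2 S^2_{n_\gamma})_c.
\]
Thus both sides of $\hh{\Com}$ are weighted $\ell^\infty$-direct sums of column Hilbert spaces, and since any bounded map between column Hilbert spaces is automatically completely bounded with the same norm, it will be enough to verify the ordinary contractivity at the level of underlying Hilbert space norms.

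Now I would plug in Proposition \ref{prop1}: for $X=(X_\alpha)_\alpha \in \A_\Delta(\QG)^*$, the $(\beta,\gamma)$-component of $\hh{\Com}(X)$, as an element of $S^2_{n_\beta}\ot_2 S^2_{n_\gamma}\cong S^2_{n_\beta n_\gamma}$, is unitarily equivalent (via the intertwiner $V(\beta,\gamma)$) to the block diagonal $\bigoplus_{\alpha\subseteq\beta\ot\gamma} X_\alpha$. Hence
\[
\|\hh{\Com}(X)(\beta,\gamma)\|^2_{S^2_{n_\beta n_\gamma}} = \sum_{\alpha\subseteq\beta\ot\gamma} \|X_\alpha\|^2_{S^2_{n_\alpha}},
\]
the sum taken with multiplicities. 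Combining this with the representation-theoretic identity $\sum_{\alpha\subseteq\beta\ot\gamma} n_\alpha = n_\beta n_\gamma$ and the trivial bound $\|X_\alpha\|^2 \le n_\alpha \cdot \sup_\delta (n_\delta^{-1}\|X_\delta\|^2)$ yields
\[
(n_\beta n_\gamma)^{-1}\sum_{\alpha\subseteq\beta\ot\gamma} \|X_\alpha\|^2_{S^2_{n_\alpha}} \le \sup_{\alpha\in\Irr(\QG)} n_\alpha^{-1}\|X_\alpha\|^2_{S^2_{n_\alpha}},
\]
which is exactly the contraction estimate $\|\hh{\Com}(X)\| \le \|X\|$ in the relevant norms; the complete contractivity follows from the column-Hilbert remark above, and by duality one then concludes that $\hh{\Com}_*$ is a complete contraction into $\A_\Delta(\QG)$.

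The only real obstacle I anticipate is bookkeeping: one must be careful that the isomorphisms $(S^2_n)_c \vnt (S^2_m)_c \cong (S^2_n \ot_2 S^2_m)_c$ and the passage from the $\ell^1$-$\ell^\infty$ duality between the direct sums are all completely isometric with the correct weights, and that the flip $\Sigma_{2,3}$ used in the proof of Theorem \ref{thm-A_Delta-Banach} is accounted for when matching components. Everything beyond this is routine once the reduction to Hilbert-Schmidt norms and the Clebsch-Gordan-type dimension identity are in place.
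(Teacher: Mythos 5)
Your proposal is correct and follows essentially the same route as the paper: dualise to the coproduct, apply Proposition \ref{prop1} to identify $\hh{\Com}(X)(\beta,\gamma)$ with $V\bigl(\bigoplus_{\alpha\subseteq\beta\ot\gamma}X_\alpha\bigr)V^*$, and close the estimate with $\sum_{\alpha\subseteq\beta\ot\gamma}n_\alpha=n_\beta n_\gamma$. The only imprecision is your appeal to automatic complete boundedness of maps between column Hilbert spaces --- an $\ell^\infty$-direct sum of column spaces is not itself a column space, so the matrix-level bound instead comes from the orthogonality of the blocks in $(S^2_{n_\beta}\ot_2 S^2_{n_\gamma})_c$ (i.e.\ $\bigl\|\sum_\alpha a_\alpha^*a_\alpha\bigr\|\le\sum_\alpha\|a_\alpha^*a_\alpha\|$), which is exactly what the paper's ``the matrix case calculation is similar'' refers to.
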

\begin{proof}
The proof is essentially the same as \cite[Theorem 1.1, Theorem 1.11]{LSS}. We include the proof for the convenience of the readers. For simplicity we only show that
	$$\hat{\Delta} :\A_\Delta(\QG)^* \to (A_\Delta(\QG)\prt\A_\Delta(\QG))^*$$
is a contraction. We have,  considering  $X=( X_\alpha)_{\alpha \in \Irr(\QG)} \in c_{00}(\hQG)$
	\begin{align*}
	\lefteqn{||\hat{\Delta}(X)||_{(A_\Delta(\QG)\prt\A_\Delta(\QG))^*} }\\
	&= \sup_{\beta, \gamma\in \Irr{\QG}}n^{-1/2}_\beta n^{-1/2}_\gamma ||\hat{\Delta}(X)(\beta, \gamma)||_{(S^2_{n_\beta})_c \ot  (S^2_{n_\gamma})_c}
	\end{align*}
and by Proposition \ref{prop1}
	\begin{align*}
	||\hat{\Delta}(X)(\beta, \gamma)||_{(S^2_{n_\beta})_c \ot  (S^2_{n_\gamma})_c}
	& = ||V(\oplus_{\delta \subseteq \beta \otimes \gamma} X_\delta)V^* ||_{(S^2_{n_\beta})_c \ot  (S^2_{n_\gamma})_c}\\
	& = (\sum_{\delta \subseteq \beta \otimes \gamma} ||X_\delta||^2_{S^2_{n_\delta}})^{1/2}\\
	& \le (\sum_{\delta \subseteq \beta \otimes \gamma} n_\delta)^{1/2}||X||_{A_\Delta(\QG)^*}.
	\end{align*}
Since $\sum_{\delta \subseteq \beta \otimes \gamma} n_\delta = n_\beta n_\gamma$ we get the desired conclusion. The matrix case calculation is similar.
\end{proof}

\subsection{The case of non-Kac type compact quantum groups}

When the compact quantum group $\QG$ is of non-Kac type, we can follow the final norm formula of \eqref{eq-A_Delta} to propose a definition of the norm on $\A_\Delta(\QG)$ as follows:
	$$\|X\cdot \widehat{h} \|_{A^2} = \sum_\alpha d^{3/2}_\alpha \| X_\alpha Q^{1/2}_\alpha \|_{S^2_{n_\alpha}}.$$
The operator space structure of $\A_\Delta(\QG)$ will be then similarly given by $\ell^1\text{-}\oplus_{\alpha \in \text{Irr}(\Gb)} d_\alpha^{3/2} (S^2_{n_\alpha})_r$. Note that the classical dimensions of representations, ($n_{\alpha}$), have been replaced by quantum ones, ($d_{\alpha}$).

\begin{rem}\label{rem-VQ}
Let $\alpha, \beta \in \Irr(\QG)$ and let $V: \bigoplus_{u^{\gamma} \subset u^{\alpha} \otimes u^{\beta}} H_\gamma \to H_\alpha \otimes H_\beta$ be the unitary map arising via the irreducible decomposition of $u^{\alpha} \otimes u^{\beta}$. Then we also have corresponding relation of $Q$-matrices:
	$$Q_\alpha \otimes Q_\beta = V(\bigoplus_{u^{\gamma} \subset u^{\alpha} \otimes u^{\beta}} Q_\gamma )V^*.$$
It follows essentially by the uniqueness of the matrices $Q_{\alpha}$ (see \cite{worLH}).
\end{rem}

We can now follow the same argument as in Theorem \ref{KacFourier}.

	\begin{tw}
	$(A_\Delta(\QG), \hat{\Delta}_*)$ is a completely contractive Banach algebra for a general compact quantum group $\QG$.
	\end{tw}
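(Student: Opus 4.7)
The plan is to mirror the argument of Theorem \ref{KacFourier} verbatim, with every occurrence of the classical dimension $n_\alpha$ replaced by the quantum dimension $d_\alpha$ and with the extra $Q_\alpha$-bookkeeping supplied by Remark \ref{rem-VQ}. By the duality principle used there, it suffices to verify that $\hat\Delta$ is a complete contraction from $\A_\Delta(\QG)^*$ into $(\A_\Delta(\QG)\prt\A_\Delta(\QG))^*$; I would write out only the contraction estimate since the matrix amplifications are routine.

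First I would record the dual norm. The announced identification $\A_\Delta(\QG) \cong \ell^1\text{-}\bigoplus_\alpha d_\alpha^{3/2}(S^2_{n_\alpha})_r$ is realised by $X\cdot\hh h \mapsto (X_\alpha Q_\alpha^{1/2})_\alpha$. Under the Plancherel-type pairing
\[
\hh h(YX) \;=\; \sum_\alpha d_\alpha \Tr\!\bigl((Q_\alpha^{1/2} Y_\alpha)(X_\alpha Q_\alpha^{1/2})\bigr),
\]
standard duality for weighted $\ell^1$-sums produces $\A_\Delta(\QG)^* \cong \ell^\infty\text{-}\bigoplus_\alpha d_\alpha^{-1/2}(S^2_{n_\alpha})_c$ via $Y\mapsto (Q_\alpha^{1/2}Y_\alpha)_\alpha$, so $\|Y\|_{\A_\Delta(\QG)^*} = \sup_\alpha d_\alpha^{-1/2}\|Q_\alpha^{1/2}Y_\alpha\|_{S^2_{n_\alpha}}$. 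Applying the same recipe to $\A_\Delta(\QG)\prt\A_\Delta(\QG)$ supplies the analogous description on pairs $(\beta,\gamma)$.

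For the key estimate, fix $X=(X_\alpha) \in c_{00}(\hQG)$ and $\beta,\gamma \in \Irr(\QG)$. Proposition \ref{prop1} gives
\[
\hh\Com(X)(\beta,\gamma) \;=\; V(\beta,\gamma)\Bigl(\bigoplus_{\alpha\subset\beta\otimes\gamma} X_\alpha\Bigr) V(\beta,\gamma)^*,
\]
while Remark \ref{rem-VQ} asserts $Q_\beta\otimes Q_\gamma = V\bigl(\bigoplus_\alpha Q_\alpha\bigr)V^*$; since both sides are positive, uniqueness of the positive square root yields $Q_\beta^{1/2}\otimes Q_\gamma^{1/2} = V\bigl(\bigoplus_\alpha Q_\alpha^{1/2}\bigr)V^*$. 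Multiplying the two identities collapses everything to
\[
(Q_\beta^{1/2}\otimes Q_\gamma^{1/2})\,\hh\Com(X)(\beta,\gamma) \;=\; V\Bigl(\bigoplus_{\alpha\subset\beta\otimes\gamma} Q_\alpha^{1/2} X_\alpha\Bigr)V^*.
\]
Unitary invariance of the Hilbert--Schmidt norm and the entry-wise bound $\|Q_\alpha^{1/2}X_\alpha\|_{S^2}^2 \le d_\alpha \|X\|_{\A_\Delta(\QG)^*}^2$ give a Hilbert--Schmidt bound by $\bigl(\sum_{\alpha\subset\beta\otimes\gamma} d_\alpha\bigr)^{1/2}\|X\|_{\A_\Delta(\QG)^*}$. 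Taking traces on both sides of the $V$-conjugation identity produces $d_\beta d_\gamma = \sum_{\alpha\subset\beta\otimes\gamma}d_\alpha$, so dividing by the required weight $(d_\beta d_\gamma)^{1/2}$ yields the desired contraction.

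The principal obstacle is not the estimate itself, which is a line-for-line reprise of Theorem \ref{KacFourier}, but the bookkeeping: one must verify that the weighted $\ell^1$-structure on $\A_\Delta(\QG)$ dualises exactly as claimed in the non-Kac setting (so that the $Q_\alpha^{1/2}$ factors appear symmetrically on both sides of the Plancherel pairing), and that the positive square root is compatible with the intertwiner $V$. Once these are in place, the calculation reduces to the Kac case with $n_\alpha$ replaced throughout by $d_\alpha$.
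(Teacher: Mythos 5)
Your argument is correct and follows essentially the same route as the paper: dualise, use Proposition \ref{prop1} together with Remark \ref{rem-VQ} to write $(Q_\beta^{1/2}\otimes Q_\gamma^{1/2})\hh\Com(X)(\beta,\gamma)$ as a unitary conjugate of $\bigoplus_{\alpha\subseteq\beta\otimes\gamma}Q_\alpha^{1/2}X_\alpha$, and conclude from $\sum_{\alpha\subseteq\beta\otimes\gamma}d_\alpha=d_\beta d_\gamma$. Your extra observations --- that the square-root identity for the $Q$-matrices follows from uniqueness of the positive square root, and that the quantum-dimension multiplicativity is obtained by taking traces in Remark \ref{rem-VQ} --- are correct details the paper leaves implicit.
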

\begin{proof}
For simplicity we only show that
	$$\hat{\Delta} :\A_\Delta(\QG)^* \to (\A_\Delta(\QG)\prt\A_\Delta(\QG))^*$$
is a contraction. Now we have, again for $X=(X_{\alpha})_{\alpha \in \Irr(\QG)} \in c_{00}(\hQG)$ and each $\alpha \in \Irr(\QG)$
	\begin{align*}
	\lefteqn{||\hat{\Delta}(X)||_{(A_\Delta(\QG)\prt\A_\Delta(\QG))^*} }\\
	&= \sup_{\beta, \gamma\in \Irr(\QG) }d^{-1/2}_\beta d^{-1/2}_\gamma ||(Q^{1/2}_\beta \otimes Q^{1/2}_\gamma)\hat{\Delta}(X)(\beta, \gamma)||_{(S^2_{n_\beta})_c \ot  (S^2_{n_\gamma})_c}
	\end{align*}
and by Proposition \ref{prop1} and Remark \ref{rem-VQ}
	\begin{align*}
	||\hat{\Delta}((X_\alpha))(\beta, \gamma)||_{(S^2_{n_\beta})_c \ot  (S^2_{n_\gamma})_c}
	& = ||V(\oplus_{\delta \subseteq \beta \otimes \gamma}Q^{1/2}_\delta X_\delta)V^* ||_{(S^2_{n_\beta})_c \ot  (S^2_{n_\gamma})_c}\\
	& = (\sum_{\delta \subseteq \beta \otimes \gamma} ||Q^{1/2}_\delta X_\delta||^2_{S^2_{n_\delta}})^{1/2}\\
	& \le (\sum_{\delta \subseteq \beta \otimes \gamma} d_\delta)^{1/2}||X||_{A_\Delta(\QG)^*}.
	\end{align*}
Since $\sum_{\delta \subseteq \beta \otimes \gamma} d_\delta = d_\beta d_\gamma$ we get the desired conclusion. The matrix case calculation is similar.
\end{proof}

\begin{rem}
The last result shows we still get a completely contractive algebra $A_\Delta(\QG)$ for a non-Kac type $\QG$, but there is no longer any apparent connection between  $A_\Delta(\QG)$ and ``the diagonal subgroup of $\QG\times \QG$''.
\end{rem}

\section{Non-operator weak amenability of $\A_\Delta(O^+_N)$}

In this section we discuss general strategies for showing that the (operator) Banach algebra $\A_\Delta(\Gb)$ associated to a compact quantum group $\QG$ of Kac type  does not satisfy the weakest possible form of amenability, namely operator weak amenability, and implement this in showing that $\A_\Delta(O^+_N)$ is not operator weakly amenable for $N\ge2$.

\subsection{A general strategy for constructing derivations}

Recall again (see Subsection \ref{subsec-A_Delta}) that a completely contractive Banach algebra $\Alg$ is called operator weakly amenable if every completely bounded derivation $D:\Alg \to \Alg^*$ is inner.

Let $\QG$ be a compact quantum group of Kac type. We follow the idea  for the proof of non-amenability of $\A(SO(3))$ from \cite{BJohns}, where Johnson constructed a certain non-inner bounded derivation of the said Fourier algebra using Lie derivatives on $SO(3)$.  We need however an appropriate translation of these classical concepts into the quantum group language. We will split the construction of the derivation into two stages. The first stage is to consider a candidate for the derivation on Pol$(\Gb)$-level with respect to the $\A_\Delta(\Gb)$ multiplication. Recall that differential operators (with constant coefficients) in the classical case are nothing but Fourier multipliers, so that also here we begin with considering certain Fourier multipliers on Pol$(\Gb)$. For $A = (A_\alpha)\in \prod_{\alpha}M_{n_\alpha}$ (to be called the \emph{symbol of the multiplier $T_A$}) we set
	$$T_A : \PolQG \to \PolQG,\; a \mapsto \F^{-1}(A\F(a))$$
or equivalently
	\begin{equation}\label{eq-multiplier}
	T_A(B\cdot \widehat{h}) = AB \cdot \widehat{h},
	\end{equation}
where $B\in c_{00}(\hQG)$. Note that to define the map $T_A$ on the $\PolQG$-level we do not need uniform boundedness of the symbol $A$, and we will actually use unbounded symbols for our construction below.
   	
Note the (algebraic) adjoint map $T^*_A$ is given by
	\begin{equation}\label{eq-adj-T_A}
	T^*_A(C) = CA, \; C\in \prod_{\alpha \in \text{Irr}(\Gb)}M_{n_\alpha}.
	\end{equation}
We want the map $T_A$ to be a derivation with respect to $A_\Delta(\QG)$-multiplication, which is equivalent to the equality
	$$T_A\circ \hat{\Delta}_* = \hat{\Delta}_* \circ (I \otimes T_A) + \hat{\Delta}_* \circ (T_A\otimes I).$$
By taking adjoint and recalling \eqref{eq-adj-T_A} we can conclude that we need
	\begin{equation}\label{eq-unbdd-point-der}
	\hat{\Delta}(A) = A\otimes I + I \otimes A,
	\end{equation}
where $\hat{\Delta}$ is the natural extension of the coproduct of $\hQG$ to unbounded sequences (and the equality above can be understood formally).

For the second stage of the construction, we consider the following natural map
	\begin{equation}\label{eq-Phi}
	\Psi:\A_\Delta(\QG) \to (A_\Delta(\QG))^*,\; B\cdot \widehat{h} \to \widehat{S}(B),
	\end{equation}
where $\widehat{S}$ is the antipode of the dual quantum group $\widehat{\Gb}$. Note that we can easily see that $\Psi$ is a contraction. Indeed,  by \eqref{eq-antipode-dual-transpose}, Theorem \ref{thm-A_Delta-Banach} and the fact that $n_\alpha = n_{\bar{\alpha}}$, $\alpha\in \Irr(\QG)$ we have
	$$||\widehat{S}(B)||_{(A_\Delta(\QG))^*} = \sup_{\alpha}n^{-\frac{1}{2}}_\alpha ||B^t_{\bar{\alpha}}||_{S^2_{n_\alpha}}\le \sum_{\alpha}n^{\frac{3}{2}}_\alpha ||B_\alpha||_{S^2_{n_\alpha}} = ||B\cdot \widehat{h}||_{\A_\Delta(\QG)}.$$
Moreover, the map $\Psi$ is an $A_\Delta(\QG)$-bimodule map. Indeed, we can easily check that the adjoint map $\Psi^*$ is given by
	$$\Psi^*(C\cdot \widehat{h}) = \widehat{S}(C), \; C\cdot \widehat{h}\in\A_\Delta(\QG)\subset \A_\Delta(\QG)^{**},$$	
so that we have
	\begin{align*}
		\la \Psi\circ \hat{\Delta}_*(A\cdot \widehat{h} \otimes B\cdot \widehat{h}),  C\cdot \widehat{h}\ra
		& = (A\cdot \widehat{h} \otimes B\cdot \widehat{h})( \hat{\Delta}(\widehat{S}(C))).
	\end{align*}
At the same time we have
	\begin{align*}
		\la \hat{\Delta}^L_*(A\cdot \widehat{h} \otimes & \Psi(B\cdot \widehat{h})), C\cdot \widehat{h}\ra
		 = \la (\widehat{S}(B),  \hat{\Delta}_*(C\cdot \widehat{h} \otimes A\cdot \widehat{h})\ra
		= (C\cdot \widehat{h} \otimes A\cdot \widehat{h})( \hat{\Delta}(\widehat{S}(B)))
		\\& = (A\cdot \widehat{h} \otimes C\cdot \widehat{h})( (\widehat{S}\otimes \widehat{S})\hat{\Delta}(B))
 =  (\widehat{h} \ot  \widehat{h}) ((\widehat{S}\otimes \widehat{S})\hat{\Delta}(B) (A \ot C))
 \\&= (\widehat{h}\circ \widehat{S} \ot  \widehat{h}\circ \widehat{S}) ( (\widehat{S}(A) \ot \widehat{S}(C)) \hat{\Delta}(B))
= 		(\widehat{h} \ot  \widehat{h}) ( (\widehat{S}(A) \ot \widehat{S}(C)) \hat{\Delta}(B))
\\&= (\widehat{h}\cdot \widehat{S}(A)) ((\id \ot \widehat{h})((1 \ot \widehat{S}(C)) \hat{\Delta}(B)))
=  (\widehat{h}\cdot \widehat{S}(A)) (\widehat{S} ((\id \ot \widehat{h}) \hat{\Delta}(\widehat{S}(C)) (1 \ot B)))
  \\&= (\widehat{h} \circ \widehat{S}) (((\id \ot \widehat{h}) \hat{\Delta}(\widehat{S}(C)) (1 \ot B))A) = (A\cdot \widehat{h} \otimes B\cdot \widehat{h})( \hat{\Delta}(\widehat{S}(C))) ,
	\end{align*}
where 	$\hat{\Delta}^L_*$ denoted the left module action of $A_\Delta(\QG)$ on $A_\Delta(\QG)^*$. Note that we use the fact that $\widehat{S}$ (or rather its natural extension to unbounded elements) is an involution and that it preserves the Haar weight. Finally  the second to the last equality follows from the strong left invariance. Combining the above two equations we can conclude that $\Psi$ is a left $A_\Delta(\QG)$-module map. The right  $A_\Delta(\QG)$-module property can be shown similarly.

The composed map
	$$\Psi \circ T_A: \PolQG \subseteq\A_\Delta(\QG) \to\A_\Delta(\QG)^*$$
clearly satisfies, in this case, the derivation condition for $\A_\Delta(\QG)$ multiplication. Thus we hope to find a symbol $A = (A_\alpha)_{\alpha \in \Irr (\QG)}\in \prod_{\alpha\in \Irr (\QG)}M_{n_\alpha}$ satisfying the condition \eqref{eq-unbdd-point-der} and such that $\Psi \circ T_A$ can be extended to a completely bounded map from $\A_\Delta(\QG)$ into $\A_\Delta(\QG)^*$. Finally, we would then need to check that the resulting derivation $\Psi \circ T_A$ is not inner.

\subsection{Constructing derivations for compact quantum groups with classical subgroups}

We continue using the same notation as above. In order to find a symbol $A$ satisfying the condition \eqref{eq-unbdd-point-der} we could begin with a weak $^*$-continuous one-parameter group of group-like unitary elements $C_t \in \ell^{\infty}(\hQG)$, $t\in \br$. Recall that $X\in \ell^{\infty}(\hQG)$ is called group-like if $\hat{\Delta}(X) = X \otimes X$. Taking derivative of $C_t$ at $t=0$ will give us the desired symbol $A$. If $\QG$ admits a classical closed quantum subgroup $\QH$, say isomorphic to the circle $\Tb$, we can `transport' such a family from $\QH$. Let then indeed $\QH$ be a closed quantum subgroup of $\QG$ with the canonical $*$-homomorphisms $\pi: \text{Pol}(\Gb) \to \text{Pol}(\Hb)$ and $\iota: \ell^{\infty}(\widehat{\QH}) \to \ell^{\infty}(\hQG)$ from Subsection \ref{subsection-closedQsubgroups}. Our strategy is based on using  a one-parameter group of group-like elements $D_t \in \ell^{\infty}(\widehat{\QH})$ to define the group-like elements $C_t = \iota(D_t)\in \ell^{\infty}(\hQG)$. Since $\Hb$ is a classical compact group we know that, in fact, any group-like element of $\ell^{\infty}(\widehat{\QH})$ can be identified with $\lambda_\Hb(h)$ for some $h\in \Hb$, where $\lambda_\Hb$ is the left regular representation of $\Hb$ (\cite{EnochSchwarz}). If $\QH=\Tb$ we can simply put:
	\begin{equation}\label{eq-torus-grouplike}
	D_t:=\lambda_\Tb(e^{it}) \cong (e^{itn})_{n\in \z} \in \ell^{\infty}(\z)
	\end{equation}
for $t\in \br$.

The next step would be then to check the complete boundedness of the resulting map $\Psi \circ T_A: \PolQG \subseteq\A_\Delta(\QG) \to\A_\Delta(\QG)^*.$ Since $A_\Delta(\QG) \cong \ell^1\text{-}\oplus_{\alpha \in \text{Irr}(\Gb)} n^{3/2}_\alpha (S^2_{n_\alpha})_r$ and $A_\Delta(\QG)^* \cong \ell^\infty\text{-}\oplus_{\alpha \in \text{Irr}(\Gb)} n^{-1/2}_\alpha (S^2_{n_\alpha})_c$ it suffices to verify  the uniform boundedess of the norms of $\Psi \circ T_A$ restricted to each block of the direct sum, i.e.\
	$$(\Psi \circ T_A)|_{n^{3/2}_\alpha S^2_{n_\alpha}}: n^{3/2}_\alpha (S^2_{n_\alpha})_r \to n^{-1/2}_{\bar{\alpha}} (S^2_{n_{\bar{\alpha}}})_c,\; X \mapsto (A_{\bar{\alpha}} X)^t = X^t A^t_{\bar{\alpha}}$$
for each $\alpha \in \text{Irr}(\Gb)$, taking \eqref{eq-antipode-dual-transpose} into account. Since
	$$n^{-1/2}_{\bar{\alpha}} || X^t A^t_{\bar{\alpha}} ||_{S^2_{n_{\bar{\alpha}}}} \le n^{-2}_\alpha||A_\alpha||_\infty n^{3/2}_\alpha||X||_{S^2_{n_\alpha}}$$
we have $||(\Psi \circ T_A)|_{n^{3/2}_\alpha \cdot S^2_{n_\alpha}}|| \le n^{-2}_\alpha||A_\alpha||_\infty$ and further
	$$||(\Psi \circ T_A)|_{n^{3/2}_\alpha (S^2_{n_\alpha})_r}||_{cb} \le n^{-1}_\alpha||A_\alpha||_\infty.$$
Here we used the fact that $CB(R_n, C_n) \cong S^2_n$ isometrically so that
	$$||u||_{cb} \le \sqrt{n}||u||$$
for $u\in CB(R_n, C_n)$. Thus we have
	$$||\Psi \circ T_A||_{cb} \le \sup_\alpha n^{-1}_\alpha||A_\alpha||_\infty.$$

The above tells us that we need to understand the norm growth of $A_\alpha$, which can be done by examining associated functionals on $\PolQG$. More precisely, there are uniquely determined states
$\psi_t : \text{Pol}(\Hb) \to \Comp$ associated to $D_t$, i.e. the functionals satisfying the formulas
	$$(I\otimes \psi_t)U_{\Hb} = D_t, \;\;\;\; t \in \br.$$
Then the states $\varphi_t := \psi_t \circ \pi: \PolQG \to \Comp$ are associated in the analogous way  to the elements $C_t = \iota(D_t) \in \ell^{\infty}(\hQG)$ since $C_t = (\iota \otimes \psi_t)U_\Hb = (I\otimes \psi_t\circ \pi)U_\Gb = (I\otimes \varphi_t)U_\Gb.$
Thus our choice of symbol $A = \frac{dC_t}{dt}|_{t = 0}$ is associated to the functional
	$$\phi = \frac{d\varphi_t}{dt}|_{t = 0}: \PolQG \to \Comp.$$
Note that the correspondence between $A$ and $\phi$ (which is nothing but another incarnation of the Fourier transform) means that
	$$A_\alpha = (I_{d_\alpha}\otimes \phi)(u^{\alpha}) \;\;(\text{and we may simply write in what follows $\phi(u^{\alpha})$})$$
and the same holds for $(D_t, \psi_t)$ and $(C_t, \varphi_t)$. Note further that the functionals $\psi_t$ and $\varphi_t$ are in fact characters on $\PolQG$ and $\Pol(\Hb)$, respectively, since their associated elements $C_t$ and $D_t$ are group-like.

All the above construction lead to the following completely bounded derivations.

	\begin{prop}\label{prop-cb-derivation}
	Let $\QG = O^+_N$, $N\ge 2$ and $\Hb$ be a circle group contained in $O_N$ viewed as a closed quantum subgroup of $\QG$. Let $A$ be the symbol defined as above. Then the map $\Psi\circ T_A$ extends to a completely bounded derivation
		$$\Psi \circ T_A: \A_\Delta(\QG) \to\A_\Delta(\QG)^*.$$
	\end{prop}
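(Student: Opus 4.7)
The plan is to verify three properties of $\Psi \circ T_A$ separately -- the derivation identity, the extension to $\A_\Delta(\QG)$, and complete boundedness of that extension -- by combining the general scheme of Section 5.1 with a concrete weight estimate for $O_N^+$. For the derivation property, $D_t$ from \eqref{eq-torus-grouplike} is a one-parameter family of group-like unitaries in $\ell^\infty(\widehat{\Tb})$, and since $\iota$ is a unital $*$-homomorphism intertwining coproducts, $C_t = \iota(D_t)$ inherits this property in $\ell^\infty(\widehat{\QG})$. Differentiating $\hh{\Delta}(C_t) = C_t\ot C_t$ at $t=0$ (using $C_0 = 1$) yields the formal primitive relation $\hh{\Delta}(A) = A\ot 1 + 1\ot A$ on $\prod_\alpha M_{n_\alpha}$, which is exactly condition \eqref{eq-unbdd-point-der} making $T_A$ a derivation for $\hh{\Delta}_*$; composing with the bimodule map $\Psi$ from \eqref{eq-Phi} gives the derivation $\Psi\circ T_A:\PolQG\to \A_\Delta(\QG)^*$. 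Both the extension and complete boundedness will then follow from the cb-estimate $\|\Psi\circ T_A\|_{cb}\le \sup_\alpha n_\alpha^{-1}\|A_\alpha\|_\infty$ derived in Section 5.1, provided the right-hand side is finite.

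Next I would identify $\|A_\alpha\|_\infty$ explicitly. Because $C_t|_\alpha = (\id\ot(\psi_t\circ\pi))(u^\alpha)$ and $\pi(u^\alpha)$ is a finite-dimensional unitary representation of the circle $\Tb$, it decomposes after a unitary change of basis as $\mathrm{diag}(\chi_{k_1},\ldots,\chi_{k_{n_\alpha}})$ for $\chi_k(z) = z^k$. In this basis $C_t|_\alpha = \mathrm{diag}(e^{itk_j})$, so $A_\alpha = \mathrm{diag}(ik_j)$ and $\|A_\alpha\|_\infty = \max_j|k_j|$ is the largest absolute weight of $\pi(u^\alpha)$ viewed as a $\Tb$-representation.

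Finally I would bound these weights. The irreducibles of $O_N^+$ are indexed by $k\in\bn$ (starting from $0$) with $SU(2)$-type fusion rules $u^k\ot u^1 = u^{k-1}\oplus u^{k+1}$, whence by induction $u^k$ embeds as a subrepresentation of $u^{\ot k}$. Consequently the weights of $\pi(u^k)$ form a sub-multiset of the weights of $\pi(u)^{\ot k}$, each being a sum of $k$ weights of $\pi(u)$. Taking $\Tb$ as a standard rotation block inside $O_N$, $\pi(u)$ has weights $\pm 1$ together with $N-2$ zero weights, so weights of $\pi(u^k)$ have absolute value at most $k$ and $\|A_k\|_\infty\le k$. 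The recurrence $n_{k+1} = N n_k - n_{k-1}$ with $n_0=1$, $n_1=N\ge 2$ makes $(n_k)$ a strictly increasing sequence of positive integers, so $n_k\ge k+1$, and we conclude $\sup_{k\ge 0} n_k^{-1}\|A_k\|_\infty \le \sup_{k\ge 0} k/(k+1) \le 1$. The key technical step is this weight bound $\|A_k\|_\infty \le k$; the remainder is bookkeeping around the intertwining relation \eqref{eq-two-homomorphisms} and the cb-estimate from Section 5.1.
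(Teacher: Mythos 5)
Your proposal is correct, and the overall skeleton (reduce everything to the Section~5.1 estimate $\|\Psi\circ T_A\|_{cb}\le\sup_\alpha n_\alpha^{-1}\|A_\alpha\|_\infty$, then prove a linear bound $\|A_n\|_\infty\lesssim n$ and compare with the dimensions $n_k\ge k+1$) matches the paper. Where you genuinely diverge is in how the linear bound is obtained. The paper applies the multiplicative functionals $\varphi_t$ to the fusion rule $u^1\otimes u^n\cong u^{n+1}\oplus u^{n-1}$, differentiates the resulting identity of scalar matrices at $t=0$, and uses that each $\varphi_0(u^n)$ is a contraction to get $\|\phi(u^{n+1})\|\le\|\phi(u^n)\|+\|\phi(u^1)\|$ and hence $\|\phi(u^n)\|\le n\|\phi(u^1)\|$ by induction; this works purely with operator norms and never identifies $A_\alpha$ explicitly. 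You instead diagonalize: since $\pi(u^\alpha)$ is a finite-dimensional $\Tb$-representation, $A_\alpha$ is unitarily equivalent to $\mathrm{diag}(ik_j)$ with $k_j$ the weights, and the containment $u^k\subseteq(u^1)^{\otimes k}$ (itself a consequence of the same fusion rules) bounds every weight of $\pi(u^k)$ by $k\cdot\max|{\rm weights\ of\ }\pi(u^1)|$. Your route gives slightly more information (an exact description of $A_\alpha$ as a weight matrix, which is in the spirit of the paper's later Lemma~5.5 computing $A_1$), at the cost of invoking the representation theory of $\Tb$; the paper's route is marginally more robust since it never needs simultaneous diagonalization. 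Both yield the same estimate and both rest on the same two inputs: the $SU(2)$-type fusion rules and the growth $n_k\ge k+1$.

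One small remark: the proposition allows an arbitrary circle subgroup of $O_N$, whereas you specialize to the standard rotation block, for which the weights of $\pi(u^1)$ are $\pm1$ and $0$. For a general circle the nonzero weights are $\pm m_1,\ldots,\pm m_r$ for some integers $m_i$, and your argument goes through verbatim with the constant $1$ replaced by $M=\max_i|m_i|$ (which is exactly the paper's constant $\|\phi(u^1)\|_{M_{d_1}}$), so this is not a gap, merely a normalization worth stating.
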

\begin{proof}
From the discussion above it follows that it suffices to show that $\sup_\alpha n^{-1}_\alpha||A_\alpha||_\infty <\infty$. As $\Gb = O^+_N$, $N\ge 2$,  we have $\Irr(\Gb) = \n$ (\cite{Banica}) and we will write $d_n = \text{dim}\,u^{(n)}$ for $n \in \bn$ (recall that these values are given by Tchebyshev polynomials). The fusion rules for $O^+_N$ say
	$$u^{1} \otimes u^{n} \cong u^{n+1} \oplus u^{n-1},\; n\ge 1,$$	
which 	can be transferred, through the multiplicative functionals $\varphi_t$, to unitary equivalence of scalar matrices
	$$\varphi_t(u^{1})\otimes \varphi_t(u^{n}) = \varphi_t(u^{1} \otimes u^{n}) \cong \varphi_t(u^{n+1}) \oplus \varphi_t(u^{n-1}),\; n\ge 1.$$
By taking derivative of the last formula at $t=0$ we have
	\begin{equation}\label{eq-fusion}
	\phi(u^{1})\otimes \varphi_0(u^{n}) + \varphi_0(u^{1}) \otimes \phi(u^{n}) \cong \phi(u^{n+1}) \oplus \phi(u^{n-1}),\; n\ge 1.
	\end{equation}
Note that we know that each $\varphi_t$ is a state, so in particular a complete contraction, again for each $n \in \bn$
	$$||\varphi_0(u^{n})||_{M_{d_n}} \le 1.$$
Thus, \eqref{eq-fusion} tells us that
	$$||\phi(u^{n})||_{M_{d_n}} \le n ||\phi(u^{1})||_{M_{d_1}},\; n\ge 1,$$
so that we have
	$$\sup_\alpha n^{-1}_\alpha||A_\alpha||_\infty = \sup_{n\ge 1}\frac{||\phi(u^{n})||_{M_{d_n}}}{d_n} \le \sup_{n\ge 1}\frac{n}{d_n} ||\phi(u^{1})||_{M_{d_1}} <\infty$$
for all $N\ge 2$.	
\end{proof}

We can actually extend the above construction to the case of any compact quantum group of Kac type containing $\Gb = O^+_2$ as a closed quantum subgroup.

	\begin{prop}\label{prop-cb-derivation-2}
	Let $\widetilde{\Gb}$  be a compact quantum groups of Kac type containing $\Gb = O^+_2$ as a closed quantum subgroup. Let $\Hb$ be the maximal torus (circle) in $O^+_2$ and $\tilde{A}$ be the symbol for $\widetilde{\QG}$ defined as before. Then $\Psi\circ T_{\tilde{A}}$ extends to a completely bounded derivation
		$$\Psi \circ T_{\tilde{A}}: \A_\Delta(\widetilde{\QG}) \to\A_\Delta(\widetilde{\QG})^*.$$
	\end{prop}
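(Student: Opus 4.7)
The plan is to reduce the estimate on $\widetilde{\QG}$ to the estimate already obtained on $O_2^+$ in Proposition \ref{prop-cb-derivation}, using the fact that the torus generator factorizes through the embedding $O_2^+\subset\widetilde{\QG}$. First, since $\Hb$ is a closed quantum subgroup of $O_2^+$ and $O_2^+$ is a closed quantum subgroup of $\widetilde{\QG}$, composing the surjective Hopf $*$-algebra morphisms gives $\pi\circ\tilde\pi:\Pol(\widetilde{\QG})\to\Pol(\Hb)$, so $\Hb$ is a closed quantum subgroup of $\widetilde{\QG}$. The symbol $\tilde{A}$ for $\widetilde{\QG}$ constructed from \eqref{eq-torus-grouplike} is determined, as in the discussion preceding Proposition \ref{prop-cb-derivation}, by the functional $\tilde\phi=\psi\circ\pi\circ\tilde\pi=\phi\circ\tilde\pi$, where $\phi$ is the functional on $\Pol(O_2^+)$ giving rise to the $O_2^+$-symbol $A$. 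Consequently $\tilde{A}_{\tilde\alpha}=(\id\otimes\tilde\phi)(U^{\tilde\alpha})=(\id\otimes\phi)\bigl((\id\otimes\tilde\pi)(U^{\tilde\alpha})\bigr)$ for every $\tilde\alpha\in\Irr(\widetilde{\QG})$.

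Next, for each $\tilde\alpha\in\Irr(\widetilde{\QG})$ the element $(\id\otimes\tilde\pi)(U^{\tilde\alpha})$ is a unitary representation of $O_2^+$ of (classical) dimension $n_{\tilde\alpha}$, and as $\widetilde{\QG}$ and $O_2^+$ are both of Kac type, it decomposes into a direct sum of irreducibles: there exists a unitary $V_{\tilde\alpha}\in M_{n_{\tilde\alpha}}$ and indices $(k_j)_j\subset\n=\Irr(O_2^+)$ (listed with multiplicity) such that
\[
(\id\otimes\tilde\pi)(U^{\tilde\alpha})=V_{\tilde\alpha}\bigl(\bigoplus_j u^{k_j}\bigr)V_{\tilde\alpha}^*,\qquad \sum_j(k_j+1)=n_{\tilde\alpha},
\]
where we have used the fact that $n_k=k+1$ for the $k$-th irreducible of $O_2^+$. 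Applying $\id\otimes\phi$ to this decomposition yields $\tilde{A}_{\tilde\alpha}=V_{\tilde\alpha}\bigl(\bigoplus_j A_{k_j}\bigr)V_{\tilde\alpha}^*$, and hence $\|\tilde{A}_{\tilde\alpha}\|_\infty=\max_j\|A_{k_j}\|_\infty$.

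Finally, from the recursion in the proof of Proposition \ref{prop-cb-derivation} we have $\|A_k\|_\infty\le k\|A_1\|_\infty$, while $k_j\le n_{\tilde\alpha}-1$ follows from $\sum_j(k_j+1)=n_{\tilde\alpha}$. Therefore
\[
n_{\tilde\alpha}^{-1}\|\tilde{A}_{\tilde\alpha}\|_\infty\le\frac{n_{\tilde\alpha}-1}{n_{\tilde\alpha}}\,\|A_1\|_\infty\le\|A_1\|_\infty
\]
uniformly in $\tilde\alpha\in\Irr(\widetilde{\QG})$. Plugging this into the general estimate $\|\Psi\circ T_{\tilde A}\|_{cb}\le\sup_{\tilde\alpha}n_{\tilde\alpha}^{-1}\|\tilde{A}_{\tilde\alpha}\|_\infty$ established before Proposition \ref{prop-cb-derivation} gives complete boundedness of $\Psi\circ T_{\tilde{A}}$; the derivation property follows from \eqref{eq-unbdd-point-der}, which is automatic since $\tilde{A}$ is the derivative at $t=0$ of the group-like one-parameter family $\tilde{C}_t$. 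The main obstacle I expect is bookkeeping in the first step, namely the identification of $\tilde{A}_{\tilde\alpha}$ with a block-diagonal matrix of $O_2^+$-symbols: this requires care with the duality \eqref{eq-two-homomorphisms} between $\tilde\pi$ and the dual embedding of $\ell^\infty(\widehat{O_2^+})$ into $\ell^\infty(\widehat{\widetilde{\QG}})$, but once that factorization is in place the rest is a clean dimension count.
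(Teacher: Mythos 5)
Your proposal is correct and follows essentially the same route as the paper: factor the symbol through $\tilde\pi$ via $\tilde\phi=\phi\circ\tilde\pi$, decompose $(\id\otimes\tilde\pi)(U^{\tilde\alpha})$ into $O_2^+$-irreducibles of dimension at most $n_{\tilde\alpha}$, and bound $n_{\tilde\alpha}^{-1}\|\tilde A_{\tilde\alpha}\|_\infty$ block by block. The only (cosmetic) difference is that you unpack the constant explicitly as $\|A_1\|_\infty$ using the $O_2^+$ fusion rules and $n_k=k+1$, whereas the paper quotes the abstract bound $\|A_\alpha\|_\infty\le M n_\alpha$ from Proposition \ref{prop-cb-derivation} together with $n_{\alpha_j}\le n_{\tilde\alpha}$.
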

\begin{proof}
Let $\pi: \text{Pol}(\QG) \to \text{Pol}(\Hb)$, $\iota: \ell^{\infty}(\hQG) \to \ell^{\infty}(\widehat{\Hb})$, $\tilde{\pi}: \text{Pol}(\widetilde{\Gb}) \to \text{Pol}(\Gb)$ and $\tilde{\iota}: \ell^{\infty}(\hQG) \to \ell^{\infty}(\widehat{\widetilde{\Gb}})$ be the corresponding $*$-homomorphisms. By Proposition \ref{prop-cb-derivation} we already have group-like elements $C_t \in \ell^{\infty}(\hQG)$ and the respective derivative $A = (A_\alpha)_{\alpha\in \text{Irr}(\Gb)} = \frac{dC_t}{dt}|_{t=0}$ with the corresponding functionals $\varphi_t : \text{Pol}(\Gb) \to \mathbb{C}$, $\phi  : \text{Pol}(\Gb) \to \mathbb{C}$ satisfying the condition
	$$\sup_{\alpha\in \text{Irr}(\Gb)} \frac{||A_\alpha||_{M_{n_\alpha}}}{n_\alpha} = M<\infty.$$
Now we further transfer $C_t$ to $\widetilde{\Gb}$-level to get $\tilde{C}_t := \tilde{\iota}(C_t) \in \ell^{\infty}(\widehat{\widetilde{\Gb}})$ with the respective derivative $\tilde{A} = (\tilde{A}_\beta)_{\beta\in \text{Irr}(\widetilde{\Gb})} = \frac{d\tilde{C}_t}{dt}|_{t=0}$. Then it is straightforward to see that the functional $\tilde{\phi}  : \text{Pol}(\widetilde{\Gb}) \to \mathbb{C}$ corresponding to $\tilde{A}$ is actually given by $\tilde{\phi} = \phi \circ \tilde{\pi}.$ Now we would like to check the condition
	$$\sup_{\beta\in \text{Irr}(\widetilde{\Gb})} \frac{||\tilde{A}_\beta||_{M_{n_\beta}}}{n_\beta}<\infty.$$
Indeed, we know that $\tilde{A}_\beta = \tilde{\phi}(v^\beta)$, where $v^\beta$ is an irreducible unitary representation associated to $\beta\in \text{Pol}(\widetilde{\Gb})$. Note that $\tilde{\pi}(v^\beta)$ is a unitary representation of $\Gb$ so that we have irreducible decomposition $\tilde{\pi}(v^\beta) \cong u^{\alpha_1} \oplus \cdots \oplus u^{\alpha_k}$. Thus, we have
	$$\frac{||\tilde{A}_\beta||_{M_{n_\beta}}}{n_\beta} = \frac{||\phi(\tilde{\pi} (v^{\beta}))||_{M_{n_\beta}}}{n_\beta} = \max_{1\le j\le k}\frac{||\phi(u^{\alpha_j})||_{M_{n_{\alpha_j}}}}{n_\beta} = \max_{1\le j\le k}\frac{||A_{\alpha_j}||_{M_{n_{\alpha_j}}}}{n_\beta} \le M \max_{1\le j\le k} \frac{n_{\alpha_j}}{n_\beta} \le M,$$
the conclusion we wanted.
\end{proof}

\subsection{Non-operator weak amenability of $\A_\Delta(O^+_N)$, $N\ge 2$}

For the non-operator weak amenability of $\A_\Delta(\QG)$ we now only need to check that there is a completely bounded derivation $D = \Psi\circ T_A$ from Proposition \ref{prop-cb-derivation} which is not inner. If it were inner, there would be $Y\in\A_\Delta(\QG)^*$ such that
	$$D(B\cdot \widehat{h}) = (B\cdot \widehat{h})\cdot Y - Y\cdot (B\cdot \widehat{h})$$
for any $B \in c_{00}(\widehat{\QG})$.

From the definition \eqref{eq-multiplier} and \eqref{eq-Phi} we have for any $B,C\in c_{00}(\widehat{\QG})$ that
	$$\la D(B\cdot \widehat{h}), C\cdot \widehat{h}\ra = \widehat{h}(\widehat{S}(AB) C) = \widehat{h}(\widehat{S}(C)AB)$$
and
	$$\la (B\cdot \widehat{h})\cdot Y, C\cdot \widehat{h}\ra
	= \la  Y, \hat{\Delta}_*(C\cdot \widehat{h} \otimes B\cdot \widehat{h})\ra = \widehat{h} \otimes \widehat{h} (\hat{\Delta}(Y) (C\otimes B)).$$
We have a similar calculation for $Y\cdot (B\cdot \widehat{h})$, so that finally
	\begin{equation}\label{eq-difference}\widehat{h}(\widehat{S}(C)AB) = \widehat{h}(\widehat{S}(B)\widehat{S}(A)C) = \widehat{h} \otimes \widehat{h} (\hat{\Delta}(Y) (C\otimes B - B\otimes C)).
	\end{equation}
We will show that the above equality does not hold for the case of free orthogonal quantum groups $\QG=O^+_N$, $N\ge 2$, defined by Wang \cite{Wangfree,Banica}, for an appropriately chosen symbol $A$. In order to do that we need a detailed understanding of the symbol $A$ and $\hat{\Delta}(Y)$, at least for the components $A_1$ and $\hat{\Delta}(Y)(1,1)$.

Let $u^1=(x_{jk})_{1\le j,k\le N}$ be the fundamental representation of $O_N^+$, where the coefficients $x_{jk}$ are the usual generators of $\mathrm{Pol}(O_N^+)$. We will choose a canonical embedding of torus $\mathbb{T}$ into $O_N^+$ described by  the $*$-homomorphism $\pi : \PolQG \to \Pol(\mathbb{T})$ given by
	$$[\pi (x_{jk})]^N_{j,k=1} = \begin{bmatrix}\cos\theta & -\sin\theta \\ \sin\theta & \cos\theta\end{bmatrix} \oplus I_{N-2} \in M_N ,$$
where $I_{N-2}$ refers to the identity matrix of size $N-2$.

\begin{lem}\label{lem-particular-der}
	Let $A$ be the symbol constructed from the torus $\Hb \cong \mathbb{T}$ inside of $O_N^+$ in the above. Then we have $A_1 = e_{21}-e_{12}\in M_N.$
	\end{lem}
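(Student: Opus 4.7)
The plan is to compute $A_1=\phi(u^{(1)})=[\phi(x_{jk})]_{j,k=1}^N$ directly from the chain of identifications set up just before the lemma. Since $A_\alpha=(I\otimes\phi)(u^\alpha)$ and $\phi=\tfrac{d}{dt}\big|_{t=0}(\psi_t\circ\pi)$, the task reduces to differentiating at $t=0$ the scalar entries $\psi_t(\pi(x_{jk}))$.

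First I would unpack $\psi_t$. By construction $\psi_t:\mathrm{Pol}(\mathbb{T})\to\mathbb{C}$ is the unique character satisfying $(I\otimes\psi_t)U_{\mathbb{T}}=D_t$ with $D_t=\lambda_{\mathbb{T}}(e^{it})\cong(e^{itn})_{n\in\mathbb{Z}}\in\ell^\infty(\mathbb{Z})$ from \eqref{eq-torus-grouplike}. Writing $\mathrm{Pol}(\mathbb{T})=\mathbb{C}[z,z^{-1}]$ with $z=e^{i\theta}$ the generating character, the condition forces $\psi_t(z^n)=e^{itn}$ for all $n\in\mathbb{Z}$; in other words $\psi_t$ is evaluation at the point $e^{it}\in\mathbb{T}$.

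Next I would apply $\psi_t$ to the explicit matrix of $\pi(x_{jk})$ given in the statement. The top-left $2\times 2$ block yields
\[
\big[\psi_t(\pi(x_{jk}))\big]_{j,k=1}^2=\begin{bmatrix}\cos t & -\sin t\\ \sin t & \cos t\end{bmatrix},
\]
and the diagonal entries for $j\ge 3$ are constantly $1$, with all remaining off-diagonal entries zero. Differentiating at $t=0$ kills every constant entry, and leaves only
\[
\phi(x_{12})=\tfrac{d}{dt}\big|_{t=0}(-\sin t)=-1,\qquad \phi(x_{21})=\tfrac{d}{dt}\big|_{t=0}(\sin t)=1.
\]

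Assembling these values into the matrix $A_1=[\phi(x_{jk})]_{j,k=1}^N$ gives $A_1=e_{21}-e_{12}$, as claimed. There is no real obstacle here: the whole argument is a book-keeping exercise in which the only non-trivial input is the identification of $\psi_t$ as evaluation at $e^{it}$, and that is immediate from the definition of $D_t$ and the Fourier duality $\mathrm{Pol}(\mathbb{T})\leftrightarrow c_{00}(\mathbb{Z})$.
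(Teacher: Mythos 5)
Your proof is correct, and it reaches the formula by a slightly different route than the paper. The paper works on the dual side: it uses the relation $(\pi\otimes I)\mathbb{U}_{\mathbb{G}}=(I\otimes\iota)\mathbb{U}_{\mathbb{H}}$ to extract the blocks $\iota(e_0)_1$, $\iota(e_{\pm1})_1$ of the embedded spectral projections by comparing coefficients of $\cos\theta$ and $\sin\theta$ in the fundamental representation, and then differentiates $C_t=\sum_{n}e^{itn}\otimes\iota(e_n)$ to get $A_1=i\iota(e_1)_1-i\iota(e_{-1})_1=e_{21}-e_{12}$. You instead stay entirely on the function-algebra side: you identify $\psi_t$ as evaluation at $e^{it}\in\mathbb{T}$ (which is exactly what $(I\otimes\psi_t)U_{\mathbb{H}}=D_t$ forces), so $\varphi_t(u^1)=\psi_t(\pi(u^1))$ is the rotation matrix by angle $t$ padded with $I_{N-2}$, and differentiating at $t=0$ gives $e_{21}-e_{12}$ directly. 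This is legitimate because the identity $A_\alpha=(I\otimes\phi)(u^\alpha)$ is established in the general strategy of Section 5.2, and it makes the lemma an immediate calculus exercise, bypassing the dual-side bookkeeping with $\iota$ altogether; the paper's version has the minor side benefit of exhibiting the individual blocks $\iota(e_n)_1$, but these are not used elsewhere. The one point worth stating explicitly in your write-up is the interchange of $\frac{d}{dt}\big|_{t=0}$ with the entrywise application of $\varphi_t$, i.e.\ that the symbol $A=\frac{dC_t}{dt}\big|_{t=0}$ has blocks $A_\alpha=(I\otimes\phi)(u^\alpha)$ with $\phi=\frac{d\varphi_t}{dt}\big|_{t=0}$; this is exactly the correspondence the paper records just before Proposition \ref{prop-cb-derivation}, so no gap remains.
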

\begin{proof}
Let $\iota: \ell^\infty(\widehat{\mathbb{H}}) \to \ell^\infty(\widehat{\mathbb{G}})$ be the $*$-homomorphism characterized by \eqref{eq-two-homomorphisms}. We have $(\pi \otimes I) (U_\mathbb{G}) = (\pi \otimes I) (1 \oplus [x_{jk}] \oplus \cdots)$ and at the level of $u^1$ we have
	\begin{align*}
		(\pi \otimes I) (U_\mathbb{G})_1
		& =\begin{bmatrix}\cos\theta & -\sin\theta \\ \sin\theta & \cos\theta\end{bmatrix} \oplus I_{N-2} \\
		& = \cos \theta (e_{11} + e_{22}) +  \sin\theta (e_{21} - e_{12}) + \sum^N_{i=3}e_{ii}. \nonumber
	\end{align*}
Moreover, we have
	\begin{align*}
	U_\mathbb{H} & = \sum_{n\in \z} z^n \otimes e_n = 1\otimes e_0 + z\otimes e_1 + z^{-1} \otimes e_{-1} + \cdots\\
	& = 1\otimes e_0 + \cos\theta \otimes (e_1 + e_{-1}) + \sin\theta \otimes (ie_1 - ie_{-1}) + \cdots,
	\end{align*}
so that
	$$[I\otimes \iota(U_\mathbb{H})]_1 = 1\otimes \iota(e_0)_1 + \cos\theta \otimes (\iota(e_1)_1 + \iota(e_{-1})_1) + \sin\theta \otimes (i\iota(e_1)_1 - i\iota(e_{-1})_1)+ \cdots.$$
By comparing both sides of \eqref{eq-two-homomorphisms} at the level of $u^1$ we get
	$$\iota(e_0)_1 = \sum^N_{i=3}e_{ii},\;\; i\iota(e_1)_1 - i\iota(e_{-1})_1 = e_{21} - e_{12}, \;\; \iota(e_n)_1 = 0,\;\; |n|\ge 2.$$
Now recall that $D_t$ from sec 5.2 is explicitly given by $D_t = \sum_{n\in \z}e^{itn}\otimes e_n,$
so that we have $C_t = \iota(D_t) = \sum_{n\in \z}e^{itn}\otimes \iota(e_n).$ In particular, we have $(C_t)_1 = 1\otimes \iota(e_0)_1 + e^{it} \otimes \iota(e_1)_1 + e^{-it} \otimes \iota(e_{-1})_1$, so that $A_1 = i \iota(e_1)_1 - i \iota(e_{-1})_1 = e_{21} - e_{12}.$
\end{proof}

For the component $\hat{\Delta}(Y)(1,1)$ we need to know a unitary $U : \ell^2_{N^2} = \ell^2_{N^2-1} \oplus \mathbb{C} \mapsto \ell^2_N \otimes \ell^2_N$ that implements the equivalence $u^1\otimes u^1 \cong u^2 \oplus 1$. Let us use the notation $(j,k)\equiv (j-1)N+k$, which allows us the identification $\ell^2_{N^2} \cong \ell^2_N \otimes \ell^2_N$ via the map $e_{(j,k)} \mapsto e_j \otimes e_k$, where $(e_j)$ and $e_{(j,k)}$ are the canonical basis of $\ell^2_N$ and $\ell^2_{N^2}$. Then, we can regard $U$ as an operator acting on $\ell^2_N \otimes \ell^2_N$. For simplicity we often write $jk$ instead of $(j,k)$. Now we choose $U\in B(\ell^2_N \otimes \ell^2_N) \cong M_{N^2}$ to be the operator that acts as identity on $\mathrm{span}\{e_j\otimes e_k; 1\le j,k\le N, j\not= k\}$ and as the Fourier transform of $\mathbb{Z}_n$ on $\mathrm{span}\{e_j\otimes e_j; j=1,\ldots, N\}$. The coefficients of $U$ w.r.t.\ $\{e_j\otimes e_k;1\le j,k\le N\}$ are given by
\begin{equation}\label{eq-U}
U_{jk,\ell m} =
(1-\delta_{jk})\delta_{j\ell}\delta_{km} + \delta_{jk}\delta_{\ell m} \frac{\exp(2\pi i j\ell/N)}{\sqrt{N}}
\end{equation}
for $j,k,\ell,m\in \{1,\ldots,N\}$.

\begin{prop}
With $U$ as in Equation \eqref{eq-U} and $u^1$ the fundamental representation of $O_N^+$, we have
\[
U^* u^1\otimes u^1 U = \left( \begin{array}{cc} u^2 & 0 \\ 0 & 1\end{array}\right),
\]
where $u^2$ is a representative of the second non-trivial irreducible unitary representation of $O_N^+$.
\end{prop}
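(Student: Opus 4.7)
The plan is to identify the unique (up to scalar) trivial sub-representation of $u^1\otimes u^1$ via its explicit invariant vector, show that $U$ maps the last canonical basis vector of $\ell^2_N\otimes\ell^2_N$ exactly onto a normalised generator of this sub-representation, and then invoke Banica's fusion rules for $O_N^+$ to identify the orthogonal complement with a copy of $u^2$.

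First I would verify that $U$ is unitary. From the explicit formula \eqref{eq-U}, $U$ acts as the identity on the $N(N-1)$-dimensional subspace $\mathrm{span}\{e_j\otimes e_k:j\neq k\}$ and as the $N\times N$ discrete Fourier transform matrix on the complementary subspace $\mathrm{span}\{e_j\otimes e_j:1\le j\le N\}$; both are unitary. Next I would exhibit the invariant vector. The defining relations of $O_N^+$ make $[x_{ij}]$ an orthogonal matrix, so $\sum_j x_{\ell j}x_{mj}=\delta_{\ell m}$. Combined with the identity
\[
(u^1\otimes u^1)(e_j\otimes e_k\otimes 1)=\sum_{\ell,m}e_\ell\otimes e_m\otimes x_{\ell j}x_{mk},
\]
a direct computation shows that the vector $\xi:=N^{-1/2}\sum_j e_j\otimes e_j$ satisfies $(u^1\otimes u^1)(\xi\otimes 1)=\xi\otimes 1$. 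Finally I would compute the action of $U$ on the last basis vector: under the convention $(j,k)\equiv(j-1)N+k$, the $N^2$-th basis vector is $e_N\otimes e_N$, and specialising \eqref{eq-U} to $\ell=m=N$ (using $e^{2\pi ijN/N}=1$) yields $U(e_N\otimes e_N)=N^{-1/2}\sum_{j=1}^N e_j\otimes e_j=\xi$.

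To conclude, since $\mathbb{C}\xi$ is $(u^1\otimes u^1)$-invariant and the representation is unitary, the orthogonal complement $\xi^\perp$ is also invariant; by the previous step $U$ sends $\mathrm{span}\{e_j\otimes e_k:(j,k)\neq(N,N)\}$ isometrically onto $\xi^\perp$. Hence $U^*(u^1\otimes u^1)U$ is block-diagonal with $1\in\Pol(O_N^+)$ in the last diagonal entry and a unitary representation of dimension $N^2-1$ in the top-left block. Banica's classification of $\Irr(O_N^+)$ together with the fusion rule $u^1\otimes u^1\cong u^2\oplus 1$ force this block to be irreducible of dimension $N^2-1$, and we may take it as our canonical representative of $u^2$. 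The only non-routine ingredient is the verification of invariance of $\xi$, which reduces immediately to the orthogonality relations of the fundamental representation; everything else is either a direct inspection of \eqref{eq-U} or an appeal to the known representation theory of $O_N^+$.
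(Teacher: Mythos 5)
Your proof is correct, and it rests on the same two ingredients as the paper's: the orthogonality relations $\sum_{\ell} x_{j\ell}x_{k\ell}=\delta_{jk}=\sum_{\ell}x_{\ell j}x_{\ell k}$ and the appeal to the fusion rule $u^1\otimes u^1\cong u^2\oplus 1$ to identify the $(N^2-1)$-dimensional block. The difference is one of organization: the paper computes the entries $(U^*\,u^1\otimes u^1\,U)_{NN,NN}=1$ and $(U^*\,u^1\otimes u^1\,U)_{NN,\ell m}=0$ directly and then says the last column is handled "in the same way," whereas you factor the same computation through the observation that $\xi=N^{-1/2}\sum_j e_j\otimes e_j$ is the fixed vector of $u^1\otimes u^1$ and that $U(e_N\otimes e_N)=\xi$, deducing the vanishing of the last row and column from invariance of $\xi$ and $\xi^\perp$ (together with unitarity). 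Your packaging is slightly more conceptual and arguably cleaner — it explains \emph{why} the last row and column vanish and makes the role of the DFT block in $U$ transparent (it is there precisely to rotate $e_N\otimes e_N$ onto the invariant vector) — at the cost of one extra observation (unitarity of $U$, which the paper takes as read from its construction). Both arguments are complete.
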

\begin{proof}
It is sufficient to check the last row and column, then it follows from the representation theory of $O_N^+$ that the remaining block has to be unitarily equivalent to the unique irreducible unitary representation of dimension $N^2-1$.

We have $u^1\otimes u^1=(x_{jr}x_{ks})_{j,k,r,s=1}^N \in B(\ell^2_N \otimes \ell^2_N) \otimes \C(O_N^+) \cong  M_{N^2}(\C(O_N^+))$. Note that the pair $(j,k)$ corresponds to the rows and the pair $(r,s)$ corresponds to the columns.
Using the relations $\sum_{\ell=1}^N x_{\ell j} x_{\ell k}=\delta_{jk}=\sum_{\ell=1}^N x_{j\ell} x_{k\ell}$ we get
\begin{eqnarray*}
(U^* u^1\otimes u^1 U)_{NN,NN} &=& \sum_{j,k,r,s=1}^N \overline{U}_{jk,NN} x_{jr} x_{ks} U_{rs,NN} \\
&=& \sum_{j,r=1}^N \frac{\exp\big(2\pi i(rN-jN)/N\big)}{N} x_{jr}^2 \\
&=& \frac{1}{N}\sum_{j,r=1}^N x_{jr}^2= 1,
\end{eqnarray*}
for the entry in the lower right corner, and
\begin{eqnarray*}
(U^* u^1\otimes u^1 U)_{NN,\ell m} &=& \sum_{j,k,r,s=1}^N \overline{U}_{jk,NN} x_{jr} x_{ks} U_{rs,\ell m} \\
&=& \sum_{j,r,s=1}^N \frac{\exp\big((-2\pi i Nj)/N\big)}{\sqrt{N}} x_{jr} x_{js}  U_{rs,\ell m} \\
&=& \frac{1}{\sqrt{N}} \sum_{r,s=1}^N \delta_{rs}  U_{rs,\ell m}
= \sum_{r=1}^N \delta_{\ell m} \frac{\exp(2\pi i r\ell/N )}{N}
= 0
\end{eqnarray*}
for $(\ell,m)\not=(N,N)$.

In the same way we also have $(U^* u^1\otimes u^1 U)_{\ell m,NN} = 0$ for the off-diagonal entries of the last column.
\end{proof}

We can use $U$ for a partial computation of the dual coproduct $\hat{\Delta}$ by Proposition \ref{prop1}, so that we get the following result.
\begin{lem}\label{lem-copr-Y(1,1)}
Let $\QG = O_N^+$ and $Y = (Y_n)_{n\ge 0}\in \prod_{n\ge 0}M_{d_n}$ with
$$Y_0 = y_{N^2, N^2}\;\;\text{and}\;\; Y_2 = \begin{bmatrix} y_{11} & \cdots  & y_{1, N^2-1} \\ \vdots  & \ddots  & \vdots \\ y_{N^2-1,1} & \cdots & y_{N^2-1,N^2-1}\end{bmatrix},$$ for coefficients $y_{11},\ldots,y_{N^2-1,N^2-1}, y_{N^2, N^2} \in \bc$. We set $y_{N^2, s} = y_{t, N^2} = 0$ for $1\le s,t \le N^2-1$.
Then the coefficients of $\hat{\Delta}(Y)(1,1)$ are given by
\[
\hat{\Delta}(Y)(1,1)_{\ell m,pq} =
\left\{\begin{array}{ccl}
y_{\ell m,pq} & \mbox{ if} & \ell\not= m, p\not=q, \\
\sum_{j=1}^{N} \frac{\exp(-2\pi i j\ell/N)}{\sqrt{N}} y_{jj,pq} & \mbox{ if} & \ell= m, p\not=q, \\
\sum_{r=1}^{N} \frac{\exp(2\pi i pr/N)}{\sqrt{N}} y_{\ell m,rr} & \mbox{ if} & \ell \not= m, p=q, \\
\sum_{j,r=1}^{N} \frac{\exp\big(2\pi i(rp-j\ell)/N\big)}{N} y_{jj,rr} & \mbox{ if} & \ell= m, p=q. \\
\end{array}\right.
\]
\end{lem}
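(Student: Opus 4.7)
The plan is to apply Proposition \ref{prop1} to the pair $\beta=\gamma=1$, corresponding to the fundamental representation of $O_N^+$. Since $u^1\otimes u^1\cong u^2\oplus 1$, Proposition \ref{prop1} gives
\[
\hat{\Delta}(Y)(1,1) = V(1,1)\,(Y_2\oplus Y_0)\,V(1,1)^{*},
\]
where $V(1,1)$ is any unitary implementing the decomposition of $u^1\otimes u^1$. The proposition immediately preceding the lemma verifies that one may take $V(1,1)=U$ with $U$ given explicitly by Equation \eqref{eq-U}, so the task reduces to an explicit computation of $U(Y_2\oplus Y_0)U^{*}$.

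The key structural feature of $U$ is that it decouples along the splitting $\ell^{2}_{N}\otimes\ell^{2}_{N} = \mathrm{span}\{e_{j}\otimes e_{k}:j\neq k\}\oplus \mathrm{span}\{e_{j}\otimes e_{j}:1\le j\le N\}$: on the first summand $U$ acts as the identity, while on the second it acts as a discrete Fourier transform. Using the pair identification $(j,k)\leftrightarrow(j-1)N+k$, one can view $Y_{2}\oplus Y_{0}$ as a matrix in $B(\ell^{2}_{N}\otimes \ell^{2}_{N})$; the convention $y_{N^{2},s}=y_{t,N^{2}}=0$ from the statement of the lemma is precisely what ensures that this matrix is block-diagonal, with $Y_{0}$ occupying the slot at position $(N,N),(N,N)$.

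Writing out the matrix product,
\[
[U(Y_{2}\oplus Y_{0})U^{*}]_{\ell m,pq}=\sum_{j,k,r,s=1}^{N}U_{\ell m,jk}\,y_{jk,rs}\,\overline{U_{pq,rs}},
\]
the factor $U_{\ell m,jk}$ collapses to $\delta_{\ell j}\delta_{mk}$ when $\ell\neq m$ and to $\delta_{jk}\exp(2\pi i j\ell /N)/\sqrt{N}$ when $\ell=m$, and symmetrically for $\overline{U_{pq,rs}}$. Splitting into the four cases according to whether $\ell=m$ and whether $p=q$, the sums collapse immediately: in the doubly off-diagonal case both $U$-factors are Kronecker deltas, leaving $y_{\ell m,pq}$; in the two mixed cases exactly one Fourier sum over $\{1,\ldots,N\}$ survives; and in the doubly diagonal case both Fourier factors survive, producing the claimed double sum.

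No real obstacle is anticipated. The entire computation is bookkeeping once Proposition \ref{prop1} has been invoked and $V(1,1)$ has been identified with the explicit $U$; the only subtle point is to check signs and complex-conjugation conventions carefully when the Fourier factor from $\overline{U_{pq,rs}}$ is combined with the one from $U_{\ell m,jk}$, which determines the precise phase $\exp\!\big(2\pi i(rp-j\ell)/N\big)$ in the fully-diagonal case.
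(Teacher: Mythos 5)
Your strategy is exactly the one the paper intends: the paper offers no separate proof of Lemma \ref{lem-copr-Y(1,1)} beyond the remark that it follows from Proposition \ref{prop1} together with the explicit unitary $U$ of \eqref{eq-U}, and your reduction to a four-case evaluation of $U(Y_2\oplus Y_0)U^*$ is the right bookkeeping. However, the ``subtle point'' you defer --- the complex-conjugation conventions --- does not come out as you assert. Carrying out your own formula
\[
[U(Y_{2}\oplus Y_{0})U^{*}]_{\ell m,pq}=\sum_{j,k,r,s=1}^{N}U_{\ell m,jk}\,y_{jk,rs}\,\overline{U_{pq,rs}},
\]
with $U_{\ell \ell,jk}=\delta_{jk}\exp(2\pi i j\ell/N)/\sqrt{N}$ and hence $\overline{U_{pp,rs}}=\delta_{rs}\exp(-2\pi i pr/N)/\sqrt{N}$, gives in the case $\ell=m$, $p\neq q$ the value $\sum_{j}\exp(2\pi i j\ell/N)\,y_{jj,pq}/\sqrt{N}$, and correspondingly conjugated phases in the remaining two Fourier cases --- that is, precisely the complex conjugates of the phases displayed in the Lemma. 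The printed formula is $[U^{*}(Y_2\oplus Y_0)U]_{\ell m,pq}$ rather than $[U(Y_2\oplus Y_0)U^{*}]_{\ell m,pq}$; this conjugation mismatch is already latent in the paper (between Proposition \ref{prop1}, which places $V$ on the left, and the phases stated in the Lemma), and it is harmless for the subsequent application, since the non-innerness argument only uses entries with $\ell\neq m$, $p=q$ after antisymmetrization, where the sign of the phase is immaterial. Still, your write-up should not claim that the sums ``collapse immediately'' to the printed formula: either record the conjugated phases you actually obtain, or state explicitly that you are replacing $U$ by $\overline{U}$ (equivalently, reading the decomposition as $V^{*}(\cdot)V$) so as to match the Lemma as stated.
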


\begin{tw}
The algebra $\A_\Delta(O^+_N)$ is not operator weak amenable for $N\ge 2$.
\end{tw}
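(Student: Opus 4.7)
The plan is to assume for contradiction that the derivation $D:=\Psi\circ T_A$ of Proposition~\ref{prop-cb-derivation} is inner, witnessed by some $Y\in\A_\Delta(\QG)^*$ satisfying \eqref{eq-difference} for all $B,C\in c_{00}(\hQG)$. The strategy is to evaluate \eqref{eq-difference} only on $B,C$ supported at the fundamental representation $u^1$ of $O_N^+$ and extract an explicit incompatibility between the left-hand side (determined by $A_1=e_{21}-e_{12}$ from Lemma~\ref{lem-particular-der}) and the right-hand side, which by Proposition~\ref{prop1} and the fusion rule $u^1\otimes u^1\cong u^2\oplus 1$ depends only on the components $Y_0,Y_2$ described in Lemma~\ref{lem-copr-Y(1,1)}.

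Concretely, I would put $B_1=e_{ab}$, $C_1=e_{cd}$ (all other components zero) for $a,b,c,d\in\{1,\dots,N\}$. Using $\bar 1=1$ for $O_N^+$, so that $\widehat S(X)_1=X_1^t$, together with the Kac-type formula $\widehat h=\sum_\alpha n_\alpha\Tr$, a short matrix computation gives
\[
\widehat h(\widehat S(B)\widehat S(A)C)=N(\delta_{a1}\delta_{c2}-\delta_{a2}\delta_{c1})\delta_{bd}.
\]
For the right-hand side, the identification $e_{cd}\otimes e_{ab}\cong e_{(c,a),(d,b)}\in M_{N^2}$ combined with $\Tr(Xe_{ij})=X_{ji}$ turns \eqref{eq-difference} into
\[
N(\delta_{a1}\delta_{c2}-\delta_{a2}\delta_{c1})\delta_{bd}=N^2\bigl[\widehat\Delta(Y)(1,1)_{(d,b),(c,a)}-\widehat\Delta(Y)(1,1)_{(b,d),(a,c)}\bigr],
\]
and Lemma~\ref{lem-copr-Y(1,1)} then expresses the right-hand side in terms of the entries $y_{jk,\ell m}$ of $Y_0\oplus Y_2$.

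Next I would specialize to $a=2$, $c=1$, $b=d$ and let $b$ range over $\{1,\dots,N\}$. Both pairs $(d,b),(b,d)$ then coincide with the ``diagonal'' $(b,b)$ while $(c,a),(a,c)$ give the off-diagonal $(1,2),(2,1)$, so the ``$\ell=m$, $p\neq q$'' branch of Lemma~\ref{lem-copr-Y(1,1)} applies. Setting $z_j:=y_{jj,21}-y_{jj,12}$, the identity reduces to the finite Fourier system
\[
\sum_{j=1}^N e^{-2\pi ijb/N}\,z_j=\tfrac{1}{\sqrt N},\qquad b=1,\dots,N,
\]
which inverts via orthogonality of characters of $\mathbb{Z}/N\mathbb{Z}$ to $z_N=N^{-1/2}$ (and $z_j=0$ for $j<N$).

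The contradiction is then immediate from the zero-extension convention of Lemma~\ref{lem-copr-Y(1,1)}: under $(i,j)\equiv(i-1)N+j$, the pair $(N,N)$ corresponds to single index $N^2$, while $(2,1)$ and $(1,2)$ correspond to $N+1$ and $2$, and for $N\ge 2$ neither $N+1$ nor $2$ equals $N^2$; hence $y_{N^2,N+1}=y_{N^2,2}=0$, forcing $z_N=0$, which contradicts $z_N=N^{-1/2}\neq 0$. Therefore $D$ cannot be inner, and $\A_\Delta(O_N^+)$ fails operator weak amenability for all $N\ge 2$. The main obstacle is purely bookkeeping --- aligning the pair/single-index conventions of Lemma~\ref{lem-copr-Y(1,1)}, the matrix-unit indices coming from $e_{cd}\otimes e_{ab}$, and the transpose introduced by $\widehat S$ --- after which the remainder is a finite Fourier calculation on $\mathbb{Z}/N\mathbb{Z}$.
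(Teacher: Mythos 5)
Your proposal is correct, and it follows the same overall strategy as the paper: assume $D=\Psi\circ T_A$ is inner, test the identity \eqref{eq-difference} on elements supported at the fundamental representation, and combine Lemma \ref{lem-particular-der} (giving $A_1=e_{21}-e_{12}$) with Lemma \ref{lem-copr-Y(1,1)}. The endgame, however, is genuinely different. The paper makes the single symmetric choice $B_1=e_{11}$, $C_1=e_{12}+e_{21}$, for which the right-hand side vanishes for essentially formal reasons --- the matrix $Z=\hat{\Delta}(Y)(1,1)-\Sigma\hat{\Delta}(Y)(1,1)$ satisfies $Z_{(\ell,m),(p,q)}=-Z_{(m,\ell),(q,p)}$ whatever $Y$ is --- while the left-hand side is a fixed nonzero constant; no structural information about $Y$ beyond the formula of Lemma \ref{lem-copr-Y(1,1)} is really used. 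You instead let the test matrix units range freely, solve the resulting finite Fourier system on $\mathbb{Z}/N\mathbb{Z}$ (your inversion giving $z_N=N^{-1/2}$, $z_j=0$ for $j<N$ is correct), and locate the obstruction precisely: innerness would force the entry $y_{N^2,N+1}-y_{N^2,2}$ to be nonzero, i.e.\ would require a nonzero coefficient in the off-diagonal block of $Y$ relative to the decomposition $u^1\otimes u^1\cong u^2\oplus 1$ from Proposition \ref{prop1}, which is excluded since $Y=Y_2\oplus Y_0$ is block-diagonal. Your route is longer and index-heavy (the bookkeeping between pair indices, the transpose from $\widehat{S}$, and the convention $(j,k)\equiv(j-1)N+k$ all check out), but it pinpoints \emph{where} the failure of innerness lives --- in the coupling between the trivial subrepresentation and $u^2$ inside $u^1\otimes u^1$ --- whereas the paper's choice hides this behind an antisymmetry cancellation. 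Both arguments are valid for all $N\ge 2$; as a minor point, your normalisation constants $N$ and $N^2$ are the correct general ones, the factors $2$ and $4$ appearing in \eqref{eq-final-N} being the specialisation to $N=2$.
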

\begin{proof}	
Suppose that the completely bounded derivation $D = \Psi\circ T_A$ from Proposition \ref{prop-cb-derivation} and Lemma \ref{lem-particular-der} is inner. Then there would be $Y\in\A_\Delta(\QG)^*$ satisfying \eqref{eq-difference} for any $B,C\in c_{00}(\widehat{\QG})$. If we put $B$ and $C$ only supported on the representation $1$ in \eqref{eq-difference} we get
	\begin{align}\label{eq-final-N}
		2\text{Tr}(A_1B_1(\widehat{S}(C)_1))
		& = 4\text{Tr}\otimes \text{Tr}(\hat{\Delta}(Y)(1,1)(C_1\otimes B_1 - B_1\otimes C_1))\\
		& = 4\text{Tr}\otimes \text{Tr}((\hat{\Delta}(Y)(1,1) - \Sigma \hat{\Delta}(Y)(1,1)) (C_1\otimes B_1)).\nonumber
	\end{align}
For the left hand side we recall that $1 \cong \bar{1}$ and that the unitary matrix of the equivalence $1 \cong \bar{1}$ is the identity matrix. This implies that $((\widehat{S}(X))_1)^t$ is equal to $X_1$.

Thus, for the choices $B_1 = e_{11}\in M_N$ and $C_1 = e_{12}+e_{21}\in M_N$, we get the left hand side of \eqref{eq-final-N} being $2$.

Note that the flip map $\Sigma$ acts on matrix units via $\Sigma(e_{ij}\otimes e_{kl}) = e_{kl}\otimes e_{ij}$, so that the matrix presentation of Lemma \ref{lem-copr-Y(1,1)}  gives us

\[
\big(\hat{\Delta}(Y)(1,1)-\Sigma \hat{\Delta}(Y)(1,1)\big)_{\ell m,pq} =
\left\{\begin{array}{ccl}
y_{\ell m,pq}-y_{m\ell,qp} & \mbox{ if} & \ell\not= m, p\not=q, \\
\sum_{j=1}^{N} \frac{\exp(-2\pi i j\ell/N)}{\sqrt{N}} (y_{jj,pq}-y_{jj,qp}) & \mbox{ if} & \ell= m, p\not=q, \\
\sum_{r=1}^{N} \frac{\exp(2\pi i pr/N)}{\sqrt{N}} (y_{\ell m,rr}-y_{m\ell,rr}) & \mbox{ if} & \ell \not= m, p=q, \\
0 & \mbox{ if} & \ell= m, p=q, \\
\end{array}\right.
\]
for the coefficients of $\hat{\Delta}(Y)(1,1)-\Sigma \hat{\Delta}(Y)(1,1)$.

For the same choices $B_1 = e_{11}$ and $C_1 = e_{12}+e_{21}$ we get the right hand side of \eqref{eq-final-N} being $0$ for any $Y \in A_\Delta(\mathbb{G})^*$. Indeed, for $Z = \hat{\Delta}(Y)(1,1)-\Sigma \hat{\Delta}(Y)(1,1)$,  we have
	\begin{align*}
	\text{Tr}\otimes \text{Tr}(Z (C_1\otimes B_1)) & = \text{Tr}\otimes \text{Tr}(Z (e_{21}\otimes e_{11})) + \text{Tr}\otimes \text{Tr}(Z (e_{12}\otimes e_{11}))\\
	 & = Z_{12, 11} + Z_{21, 11} = 0.
	 \end{align*}
Thus, the derivation $D=\Psi\circ T_A$ is not inner.
\end{proof}

\begin{rem}
The non-innerness of the constructed derivation is a new technicality in the quantum situation. Note that it is automatically non-inner in the classical situation, i.e. when $\hat{\Delta}$ is symmetric. We supect all the derivations constructed in Proposition \ref{prop-cb-derivation-2} are non-inner, but are not able to provide the proof at the time of this writing.
\end{rem}

\subsection{Remarks on the non-weak amenability of $\A(O^+_N)$, $N\ge 2$}

The same approach as in the previous sections actually tells us that the `standard' Fourier algebra $\A(O^+_N)$, $N\ge 2$ is not weakly amenable, i.e.\ the same derivation can be shown to be bounded from $\A(O^+_N)$ into $\A(O^+_N)^*$, which boils down to checking the norms of the maps $n_\alpha S^1_{n_\alpha} \to M_{n_{\bar{\alpha}}},\;\; X \mapsto X^t A^t_{\bar{\alpha}}$ are uniformly bounded. We will record it here without repeating the proof.

	\begin{tw}\label{thm-NWA-O^+_2}
	The algebra $\A(O^+_N)$, $N\ge 2$ is not weakly amenable.
	\end{tw}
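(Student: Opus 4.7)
The plan is to reuse the derivation $D = \Psi \circ T_A$ constructed in Proposition \ref{prop-cb-derivation}, with the same symbol $A$ coming from the maximal torus inside $O_N^+$, but now treat it as a map $\A(O_N^+) \to \A(O_N^+)^*$. The algebraic identities that made $T_A$ a derivation (the point-derivation condition $\hh{\Com}(A) = A \ot 1 + 1 \ot A$) and $\Psi$ an $\A$-bimodule map (the strong left invariance and the fact that $\hh S$ preserves $\hh h$) depend only on the Hopf algebra structure of $\PolQG$, not on the norm used to complete $\PolQG$, so they transfer unchanged to the $\A(\QG)$-setting.

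The first step is to verify that $\Psi \circ T_A$ extends to a bounded (merely bounded, not cb) map $\A(O_N^+) \to \A(O_N^+)^*$. Using the Kac-type identifications $\A(O_N^+) \cong \ell^1\text{-}\bigoplus_n d_n S^1_{d_n}$ and $\A(O_N^+)^* \cong \ell^\infty(\widehat{O_N^+}) \cong \ell^\infty\text{-}\bigoplus_n M_{d_n}$, together with $\bar n = n$ for all $n \in \Irr(O_N^+)$, the restriction to the $n$-th block is the map $d_n S^1_{d_n} \to M_{d_n}$ sending $X \mapsto X^t A_n^t$. Its operator norm is bounded by $\|A_n\|_{M_{d_n}}/d_n$, so what we need is $\sup_{n \geq 0} \|A_n\|_{M_{d_n}} / d_n < \infty$. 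The fusion-rule induction from the proof of Proposition \ref{prop-cb-derivation} already gives $\|A_n\|_{M_{d_n}} \le n \|A_1\|_{M_{d_1}}$, and for $O_N^+$ the quantum (= classical) dimensions satisfy $d_n \ge n+1$ (with equality at $N = 2$), so the required supremum is finite.

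For the second step, non-innerness, the argument of the preceding subsection transfers verbatim. If $D$ were inner, there would exist $Y \in \A(O_N^+)^* = \ell^\infty(\widehat{O_N^+})$ with $D(B \cdot \hh h) = (B\cdot \hh h)\cdot Y - Y \cdot (B\cdot \hh h)$, giving the identity
\[
\hh h(\hh S(C) A B) = \hh h \ot \hh h\bigl(\hh\Com(Y)(C \ot B - B \ot C)\bigr)
\]
for all $B, C \in c_{00}(\widehat{O_N^+})$. Taking $B, C$ supported on the fundamental representation and choosing $B_1 = e_{11}$, $C_1 = e_{12} + e_{21}$ with $A_1 = e_{21} - e_{12}$ from Lemma \ref{lem-particular-der}, the LHS reduces to a nonzero multiple of $\Tr(A_1 B_1 \hh S(C)_1) = \Tr(e_{22}) = 1$, whereas the RHS only sees the components $Z_{12,11}, Z_{21,11}$ of $Z = \hh\Com(Y)(1,1) - \Sigma \hh\Com(Y)(1,1)$, and Lemma \ref{lem-copr-Y(1,1)} together with the explicit coefficient formulas force $Z_{12,11} + Z_{21,11} = 0$ by symmetry, regardless of the values of the matrix entries of $Y$. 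This contradiction holds for any $Y \in \ell^\infty(\widehat{O_N^+})$, since the crucial antisymmetric cancellation does not depend on which norm-completion governs $Y$; consequently $D$ is not inner and $\A(O_N^+)$ fails weak amenability. The only real change from the previous subsection is the boundedness estimate, which now uses the $S^1$-norm on each block instead of the $S^2$-norm, and correspondingly needs $\|A_n\|/d_n$ bounded rather than a cb-norm estimate — there is no serious obstacle.
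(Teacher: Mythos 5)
Your proposal is correct and follows essentially the same route as the paper, which explicitly states that the result is obtained by reusing the derivation $\Psi\circ T_A$ and checking uniform boundedness of the block maps $n_\alpha S^1_{n_\alpha}\to M_{n_{\bar\alpha}}$, $X\mapsto X^tA^t_{\bar\alpha}$ (the paper records the theorem "without repeating the proof"). Your filled-in details — the bound $\|A_n\|/d_n\le n\|A_1\|/d_n$ via the fusion rules, the estimate $d_n\ge n+1$, and the observation that the non-innerness computation is purely algebraic and hence valid for any $Y\in\ell^\infty(\widehat{O_N^+})\subset\A_\Delta(O_N^+)^*$ — are all accurate.
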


For the case of $O^+_N$, $N\ge 3$ we actually have a simpler way of explaining the same conclusion due to Ebrahim Samei and Nico Spronk.

	\begin{tw}\label{thm-NWA}
	Let $\Gb$ be a general compact quantum group containing a closed classical subgroup $H$ whose connected component of the identity is not abelian. Then, $\A(\Gb)$ is not weakly amenable.
	\end{tw}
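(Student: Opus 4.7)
The plan is to reduce to the classical theorem of Forrest, Samei and Spronk, which asserts that for a classical compact group $H$ the Fourier algebra $\A(H)$ is weakly amenable if and only if the connected component $H_e$ of the identity is abelian. Under our hypothesis this yields a bounded non-inner derivation $D_0:\A(H)\to\A(H)^*$; the task is then to lift the non-innerness of $D_0$ to a statement about $\A(\QG)$.

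\textbf{Step 1: A surjective homomorphism $q:\A(\QG)\to\A(H)$.} By Subsection \ref{subsection-closedQsubgroups} the inclusion of $H$ as a closed quantum subgroup of $\QG$ furnishes an injective normal unital $*$-homomorphism $\iota:\ell^{\infty}(\widehat{H})\hookrightarrow \ell^{\infty}(\hQG)$ with $\hh{\Delta}_{\QG}\circ\iota=(\iota\otimes\iota)\circ\hh{\Delta}_{H}$. The standard theory of closed (Vaes-type) quantum subgroups (see \cite{DawKasSkaSol}) additionally supplies a normal conditional expectation $E:\ell^{\infty}(\hQG)\to\iota(\ell^{\infty}(\widehat{H}))$, from which it follows that the predual map $q:=\iota_{*}:\A(\QG)\to\A(H)$ is surjective (and completely bounded). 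The coproduct intertwining of $\iota$ translates to $q$ being an algebra homomorphism.

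\textbf{Step 2: Pullback of the derivation.} Viewing $\A(H)^*=\textup{VN}(H)$ as an $\A(\QG)$-bimodule through $q$, the adjoint $\iota=q^{*}$ is an $\A(\QG)$-bimodule embedding of $\A(H)^*$ into $\A(\QG)^{*}=\ell^{\infty}(\hQG)$, as can be checked directly from the coproduct intertwining. The composition
\[
\widetilde{D}_{0}:=\iota\circ D_0\circ q:\A(\QG)\longrightarrow\A(\QG)^{*}
\]
is then a bounded derivation with image in the $\A(\QG)$-submodule $\iota(\textup{VN}(H))$.

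\textbf{Step 3: Descending innerness.} Suppose towards a contradiction that $\A(\QG)$ is weakly amenable. Then $\widetilde{D}_{0}(a)=a\cdot\xi-\xi\cdot a$ for some $\xi\in\A(\QG)^{*}$. Using the coproduct intertwining for $\iota$ one verifies that $E$ is in fact an $\A(\QG)$-bimodule map, so applying $E$ to the inner presentation yields
\[
\widetilde{D}_{0}(a)=a\cdot E(\xi)-E(\xi)\cdot a.
\]
Writing $E(\xi)=\iota(\xi_{0})$ with $\xi_{0}\in\A(H)^{*}$, the surjectivity of $q$ together with the bimodule property of $\iota$ translate this into $D_{0}(b)=b\cdot\xi_{0}-\xi_{0}\cdot b$ for every $b\in\A(H)$, contradicting the non-innerness of $D_{0}$.

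The principal obstacle is the construction in Step 1 of the normal conditional expectation $E:\ell^{\infty}(\hQG)\to\iota(\ell^{\infty}(\widehat{H}))$ and the verification that it is an $\A(\QG)$-bimodule map with respect to the canonical action on $\A(\QG)^{*}=\ell^{\infty}(\hQG)$. Morally this is the quantum analogue of averaging over $H$, and it is available from the closed-quantum-subgroup machinery, but it requires some care to set up; once it is in place, the cohomological descent in Steps 2 and 3 is routine.
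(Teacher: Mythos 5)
Your Steps 2 and 3 are formally fine (the adjoint $\iota=q^*$ of the surjective homomorphism $q$ is indeed a bimodule embedding, and the descent of innerness would work as you describe), but the whole argument hangs on the object you yourself flag as the ``principal obstacle'': a bounded projection $E:\ell^{\infty}(\hQG)\to\iota(\ell^{\infty}(\widehat{H}))$ which is moreover an $\A(\QG)$-\emph{bimodule} map, i.e.\ satisfies $(E\ot\id)\circ\hh{\Com}=\hh{\Com}\circ E=(\id\ot E)\circ\hh{\Com}$. This is \emph{not} supplied by the closed-quantum-subgroup machinery of \cite{DawKasSkaSol}: that framework gives the embedding $\iota$ and the surjection $\pi_{\A}$ on the Fourier-algebra level, but no invariant complementation of $\iota(\ell^{\infty}(\widehat{H}))$ in $\ell^{\infty}(\hQG)$. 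Even classically, the statement that $\textup{VN}(H)$ is invariantly complemented in $\textup{VN}(G)$ for a closed non-open subgroup $H$ is a genuine theorem (proved by averaging over the amenable group $G$), and its analogue for a non-coamenable compact quantum group such as $O_N^+$, $N\ge 3$, is exactly the kind of claim that would need a proof here; note that already the existence of a \emph{normal} expectation would force a bounded linear ``extension'' map $\A(H)\to\A(\QG)$ splitting $q$, which is far from automatic. Without $E$, Step 3 collapses: innerness of $\widetilde{D}_0$ produces an implementing element $\xi\in\ell^{\infty}(\hQG)$ with no way to push it down to $\textup{VN}(H)$. So as written the argument has a real gap, independently of whether the missing ingredient can ultimately be constructed.

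The paper avoids this issue entirely by a purely Banach-algebraic quotient argument: with $I=\ker(\pi_{\A})$ one has $\A(\QG)/I\cong\A(H)$, and since $\A(H)$ is \emph{operator} weakly amenable (true for every compact group), the ideal $I$ automatically has the trace extension property (an operator-space version of \cite[Proposition 2.8.66(iii)]{Dales}); then \cite[Proposition 2.8.66(iv)]{Dales} says that weak amenability of $\A(\QG)$ would force weak amenability of the quotient $\A(H)$, contradicting the Forrest--Samei--Spronk theorem. This route needs only the surjection $q$ and no splitting or complementation, so if you want to salvage your approach you should either prove the covariant expectation exists for the groups you care about, or switch to the trace-extension-property argument.
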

\begin{proof}
Recall that the canonical homorphism $\pi : \text{Pol}(\Gb) \to \text{Pol}(H)$ can be easily transferred to the Fourier algebras (\cite[Theorem 3.7]{DawKasSkaSol}), with the corresponding map
	$$\pi_{\A} : \A(\Gb) \to \A(H)$$	
remaining surjective.
Thus, we have $\A(\Gb)/ I \cong \A(H)$ with $I = \text{ker}(\pi_A)$. Since we know that $\A(H)\cong \A(\Gb)/ I$ is operator weakly amenable, the closed ideal $I$ has the trace extension property, i.e. any functional $\varphi \in I^*$ satisfying $\varphi(ab) = \varphi(ba)$, for any $a\in I$ and $b\in \A(\QG)$ can be extended to a tracial functional $\tau \in \A(\Gb)^*$. Here, we used an operator space version of \cite[Proposition 2.8.66 (iii)]{Dales}, whose proof is identical with the classical one. Moreover, we also know that $\A(H)$ is not weakly amenable, so that by \cite[Proposition 2.8.66 (iv)]{Dales} we can conclude that $\A(\Gb)$ is not weakly amenable.
\end{proof}

\begin{rem}
	\begin{enumerate}
		
		\item It is not known whether $\A(O^+_N)$, $N\ge 2$, is operator weakly amenable or not (the derivations we construct here fail to be completely bounded if we pass to the context of `standard' Fourier algebras), whilst we know that $\A(O^+_N)$, $N\ge 3$ is not operator amenable thanks to the non-amenability of $\widehat{O^+_N}$, $N\ge 3$ (see \cite[Theorem 4.5]{Ruan2}).
		
		\item In the dual situation a complete characterization of operator amenability is known. More precisely, for a compact quantum group $\QG$, the $L^1$-algebra $L^1(\QG)$ is operator amenable if and only if $\QG$ is co-amenable (i.e. $\hQG$ is amenable) and of Kac type (\cite{CasLeeRic}).
		
		\item One might hope to use the existence of classical subgroups inside $\Gb$ to prove non-operator weak amenability of $\A_\Delta(\Gb)$, but the `$\A_{\Delta}$'-algebras do not have functorial property even for the classical groups. More precisely, for a quantum closed subgroup $\Hb$ of $\Gb$ the usual restriction homomorphism $\pi : \PolQG \to \Pol(\Hb)$ need not extend to $\pi_\Delta : \A_\Delta(\Gb) \to \A_\Delta(\Hb).$ See \cite[Section 3]{LSS} for the details.
			\end{enumerate}
\end{rem}

\end{document}